\newtheorem{theorem}{Theorem}[section]
\newtheorem{lemma}[theorem]{Lemma}
\newtheorem{proposition}[theorem]{Proposition}
\newtheorem{corollary}[theorem]{Corollary}
\theoremstyle{definition}
\newtheorem{definition}{Definition}[section]
\newtheorem{example}[definition]{Example}
\newtheorem{remark}[definition]{Remark}
\newcommand{\indicator}{\mathds{1}}
\newcommand{\ind}[1]{\mathds{1}_{\{ #1 \}}}
\newcommand{\g}[1]{\mathcal{G}_{#1}} 
\newcommand{\dom}[1]{\operatorname{Dom}{#1}} 
\newcommand{\var}{\operatorname{var}}
\newcommand{\excond}[2]{\mathbb{E}\left[ \left.#1 \hspace{.2em}\right|\hspace{.2em} #2\right]} 
\newcommand{\cov}{\operatorname{cov}}
\newcommand{\Normal}{\mathcal{N}} 
\newcommand{\bigCI}{\mathrel{\text{\scalebox{1.07}{$\perp\mkern-10mu\perp$}}}}
\def\P{\mathbf P}
\def\E{\mathbb{E}}
\def\I{\mathcal{J}} 
\def\L{\mathsf{L}}
\def\deltaa{\Delta^{\{a\}}}
\def\deltaap{\Delta^{\{a\}'}}
\def\espz#1{\mathbb E\left[#1 | Z \right]}
\def\espxz#1{\mathbb E\left[#1 | X, Z \right]}
\def\cyl{\mathcal S}
\DeclareMathOperator{\oD}{d}
\def\D{\oD\!}
\def\dif{\,\D} 
\def\sminus{\setminus}
\newcommand{\R}{{\mathbb R}}
\newcommand{\N}{{\mathbb N}}
\begin{document}
\title{Malliavin structure for conditionally independent random variables}

\author{L. Decreusefond \and C. Vuong}


\keywords{Conditional independence, Dirichlet structure, Malliavin calculus, carré du champ operator, Glauber dynamics, concentration bounds, Stein's method, U-Statistics, Berry-Esseen bounds, random exchangeable hypergraphs, motif estimation, Hoeffding decomposition}
    \subjclass[2010]{Primary: 60H07}
\date{}
%
%
\begin{abstract}
  On any denumerable product of probability spaces, we extend the discrete Malliavin structure for conditionally independent random variables. As a consequence, we obtain the chaos decomposition for functionals of conditionally independent random variables.
 We also show how to derive some concentration results in that framework.
 The Malliavin-Stein method yields Berry-Esseen bounds for U-Statistics of such random variables. It leads to quantitative statements of conditional limit theorems: Lyapunov's central limit theorem, De Jong's limit theorem for multilinear forms. The latter is related to the fourth moment phenomenon. 
  The final application consists of obtaining the rates of normal approximation for subhypergraph counts in random exchangeable hypergraphs including the Erdös-Rényi hypergraph model. The estimator of subhypergraph counts is an example of homogeneous sums for which we derive a new decomposition that extends the Hoeffding decomposition.
\end{abstract}

\maketitle{}
\tableofcontents
\section{Introduction}
\label{sec:introduction}
Malliavin calculus is also known as the stochastic calculus of variations. At
the very core of it, it considers a gradient on a measured space. The link
between these the differential geometry and the measure is made through the
so-called integration by parts formula. When the measured space is the Wiener
space, i.e. the set of continuous functions with the Brownian measure, the
gradient generalizes the usual gradient on $\R^{N}$ and the integration by parts
yields an extension of the Itô integral. When the measured space is the set of
point processes on the real half-line, equipped with the law of a Poisson
process, the gradient becomes a difference operator and the integration by parts
is nothing but an avatar of the Mecke formula. It is only very recently that,
concomitantly, the situation where the measured space is a product space, i.e.
if we deal with independent random variables, has been addressed (see
\cite{dung2018poisson,decreusefond:hal-01565240,duerinckx2021size}). By order of
complexity, the next situation which can be analyzed is that of conditionally
independent random variables. This is a very common structure as de Finetti's
theorem says that an infinite sequence of random variables is exchangeable if
and only if these random variables are conditionally independent. This is the
key theorem to develop a theory on random hypergraphs as in
\cite{austin_exchangeable_2008}.

The first definitions of gradient (denoted by $D$) and divergence we introduce
below for conditionally independent random variables, bear strong formal
similarities with those of \cite{decreusefond:hal-01565240}. The difference lies
into the computations which rely heavily on conditional distributions given the
latent variable, which is here called~$Z$. We can then follow the classical
development of the Malliavin calculus apparatus: gradient, divergence, chaos,
number operator and Ornstein-Uhlenbeck semi-group (denoted by $P_{t}$). We can
even describe the dynamics of the Markov process whose infinitesimal generator
is the number operator. At a formal level, the computations are almost identical
to those of \cite{decreusefond:hal-01565240} with expectations replaced by
expectations given~$Z$.

Nevertheless, for more advanced applications, namely functional identities like
the covariance representation formula, we need to introduce a difference
operator (see Definition~\ref{def_discrete-mall-core:1}) which appears more
often than the gradient itself. It is in some sense a finer tool that the
original gradient which is useful to define the Dirichlet structure (the Glauber
process, the infinitesimal generator denoted by $\mathsf L$, etc.) but no more. This is  due
to the fact that $D_{a}D_{a}=D_{a}$, which entails that $\mathsf L$ commutes with $D$, and thus we have $DP_{t}=P_{t}D$ in place of the usual formula
$DP_{t}=e^{-t}P_{t}D$ which is the core formula to derive all functional
inequalities in the Gaussian and Poisson cases. The difference operator $\Delta$
allows to recover the crucial $e^{-t}$ factor (see
Proposition~\ref{prop-discrete-mall:interchange-diff-op-semigroup}).

The prevailing application of Malliavin calculus is nowadays,  the
evaluations of convergence rates via the Stein's method
(\cite{NourdinNormalApproximationsMalliavin2012,Decreusefond2022} and references
therein). The question is to assess a bound of the distance between a target distribution
(more often the Gaussian distribution) and the law of a deterministic transformation of a probability
measure, called the initial distribution.

%
The Dirichlet structure is useful to construct the characterization of the
target distribution and to obtain the so-called Malliavin-Stein representation
formula \cite{DecreusefondSteinDirichletMalliavinmethod2015}. The Malliavin
gradient or the carré du champ operator on the space on which lives the initial
distribution are of paramount importance to make the computations which yield
the distance. In the historical version of the Stein's method, this step was
achieved via exchangeable pairs or biased coupling. One of the key difference
between the Gaussian case and so-called discrete situations (Poisson,
Rademacher, independent random variables) is the chain rule formula: it is only in the
former framework that $D\psi(F)=\psi'(F)\,DF$. For the other contexts, we need to
resort to an approximate chain rule \cite{reinert2010stein}. This is the role
here of Lemma~\ref{lem-discrete-mall:approx-diffusion} and
Lemma~\ref{lem-discrete-mall:approx-diffusion-bis}.
Motivated by the applications to random graphs statistics, we focus here on
normal approximations of $U$-statistics as in
 \cite{barbour1989central,krokowski2017discrete, privault2020normal,rollin2022kolmogorov}. In passing,
we extend the notion of $U$-statistics by allowing the coefficients to depend on the
latent variable instead of being only deterministic. Following the strategy of
\cite{azmoodeh2014fourth}, we establish a fourth moment theorem with remainder for such
functionals. As an application,  we apply our theorems to deduce results of
asymptotic normality of subhypergraph counts in random hypergraphs.

The rest of the paper is organized as follows. The
section~\ref{sec:discrete-mall-cond-indep} lays the foundations of the Malliavin
framework.
We derive some functional identities in section~\ref{sec:funct-ident},
specifically conditional versions of Poincaré inequality and McDiarmid's
inequality.
The section~\ref{sec:normal-approximation} presents results of normal
approximation. In particular, the subsection \ref{sec:partial-fourth-moment}
states a partial fourth moment theorem for U-statistics under mild assumptions.
The aforementioned applications to hypergraph statistics are in
Section~\ref{sec:appli}.


\section{Discrete Malliavin-Dirichlet structure}
\label{sec:discrete-mall-cond-indep}

\subsection{Preliminaries}

Let $A$ be an at most denumerable set equipped with the counting measure, and
define:
\begin{equation*}
  \ell^{2}(A):=\left\{u\, :\,A\to \R,\ \sum_{a\in A}|u_{a}|^{2}<\infty \right\} \text{ and } \langle {u,v} \rangle_{\ell^{2}(A)}:=\sum_{a\in A}u_{a}v_{a}.
\end{equation*}
Let $(\Omega, \mathcal{T}, \mathbb{P})$ be a probability space, $E_{0}$ be a
Polish space and $((E_a, \Upsilon_a),\, a \in A)$ be a family of Polish spaces
such that
\begin{equation}
  \label{eq_discrete-mall-core:1}
  \begin{aligned}
    E_A &= \prod_{a \in A} E_a \\
    \Omega &= E_{0}\times E_{A}.
  \end{aligned}
\end{equation}
The product probability space $E_A$ is endowed with its Borel $\sigma$-algebra
denoted $\Upsilon \subset \mathcal{T}$. Let $Z$ an $E_0$-valued random variable.
By Theorem 10.2.2 \cite{dudley2002real}, all the subsequent conditional
distributions in the paper admit regular versions.
For any subset $B$ of $A$, we denote the set $E_B := \prod_{b \in B} E_b$ and
for $x \in E_A$, $x_B := (x_a, a\in B) \in E_B$ so that for $a \in B$,
$x_a \in E_a$. We denote $x^{B} = (x_{a}, a \in A \sminus B)$. Let
$X := (X_a)_{a \in A}$ be a sequence defined on
$(\Omega, \mathcal{T}, \mathbb{P})$ of conditionally independent random
variables given
$Z$ such that for all $a \in A$, $X_a$ is an $E_a$-valued random variable, i.e.:
$$X_a \operatornamewithlimits{\bigCI}_{Z} (X_b, b \in A \sminus \{a \}),$$
or, equivalently:
\begin{equation*}
  \mathbb{P}(X_a \in \cdot \, | \, \sigma((X_b, b \ne a), Z)) = \mathbb{P}(X_a \in \cdot  \, | \, \sigma(Z)).
\end{equation*}
We denote by $\mathbf{P}$ the law of $X$ and $\mathbf{P}^Z$ the law
$\mathcal{L}(X|Z)$.
%
See chapter 5 of \cite{kallenberg1997foundations} for a thorough review of
conditional independence, and \cite{rao2009conditional} for some limit theorems
for conditionally independent random variables.
We use the notation $\E$ for the expectation of a random variable.
By the disintegration theorem, for $a \in A$, the conditional probability
distribution of $X_a$ given $\sigma(X^{\{a\}}) \vee \sigma(Z)$ admits a regular
version $\mathbf{P}_a$.
%
%
%
%
For $p \ge 1$, let us denote $L^p(E_A \to \R, \mathbf{P})$ the set of
$p$-th-integrable functions on $E_A$ with respect to the measure $\mathbf{P}$.
It is equipped with the norm $\|\cdot \|_{L^p(E_A \to \R, \mathbf{P})}$ defined
for $f$ a measurable function on $E_A$ by
$\|f \|_{L^p(E_A \to \R, \mathbf{P})} := \int |f(x)|^p \mathbf{P}(\dif x)$. For
the sake of notations, $L^p(E_A)$ stands for the space of $p$-integrable
functionals
\begin{equation*}
  L^p(E_A) := \Bigl\{\omega \mapsto F(X(\omega)) : \omega \in \Omega, F \in L^p(E_A \to \R, \mathbf{P})\Bigr\}.
\end{equation*}
In this respect, $L^\infty(E_A)$ is the space of bounded functionals. We shall
write $F$ in place of $F(X)$ for the sake of conciseness.
%
We closely follow the usual construction of Malliavin calculus on that space.
\begin{definition}
  A functional $F$ is said to be cylindrical if there exists a finite subset
  $I\subset A$ and a functional $F_I$ in $L^2(E_I)$ such that
  $\E [|F_I|^2] < +\infty$ and
  \begin{math}
    F=F_I\circ r_I,
  \end{math}
  where $r_I$ is the restriction operator:
  \begin{align*}
    r_I\, :\,
    E_A&\longrightarrow E_I\\
    (x_a,a\in A) &\longmapsto (x_a,a\in I).
  \end{align*}
%
\end{definition}
It is clear that the set of those functionals $\cyl$ is dense in $L^2(E_A)$.
%
%
We set $L^2 (A \times E_A)$ the Hilbert space of processes which are
square-integrable with respect to the measure
$\sum_{a \in A} \delta_a \otimes \mathbf{P}$, i.e.
\begin{equation*}
  L^2(A  \times E_A) = \{U : \:  \sum_{a \in A} \E  \left[U_a(X)^2 \right] < +\infty\},
\end{equation*}
equipped with the norm and inner product:
\begin{equation*}
  \|U\|_{L^2 (A \times E_A)} := \sum_{a \in A}  \E\left[U_a^{2} \right]  \text{ and } \langle U, V \rangle_{L^2 (A \times E_A)} := \sum_{a \in A} \E \left[U_a V_a \right].
\end{equation*}

\begin{definition}
  The set of simple processes, denoted $\cyl_0(l^2(A))$ is the set of random
  variables defined on $A \times E_a$ of the form
  \begin{equation*}
    U = \sum_{a \in A} U_a \indicator_a,
  \end{equation*}
  for $U_a \in \cyl$.
\end{definition}

\subsection{Malliavin operators}
\label{sec:mall-operators}


\begin{definition}[Discrete gradient]
  For $F\in \cyl$, $DF$ is the simple process of $L^2(A\times E_A)$ defined for
  all $a\in A$ by:
  \begin{equation*}
    D_a F := F - \excond{F}{X^{\{a\}}, Z}.
  \end{equation*}
  In particular, $\cyl \subset \dom D$.
  Define the $\sigma$-field $\sigma(X^{\{a\}}) \vee \sigma(Z)$ by
  $\mathcal{G}^a$, so that
  \begin{equation}
    D_a F = F - \excond{F}{\mathcal{G}^a}.
  \end{equation}
\end{definition}
Recall that for $K \subset A$, $X_K = (X_a, a \in K)$ and
$X^K = (X_{a}, a \in A \sminus K)$. We shall write
$\mathcal{G}^K = \sigma(X^K) \vee \sigma(Z)$ and
$\mathcal{G}_K = \sigma(X_K) \vee \sigma(Z)$ for $K$ a subset of $A$.

\begin{lemma}
  \label{lem-discrete-mall:gradient-commut}
  Let $(a, b) \in A^2, \: a \ne b$, for $F \in \dom D$,
  \begin{enumerate}
    \item $D_a D_a F = D_a F$;
    \item $D_{a} D_{b} F = D_{b} D_{a} F$;
    \item $D_{a}\excond{F}{\mathcal{G}^b} = D_{b}\excond{F}{\mathcal{G}^a}$.
  \end{enumerate}
\end{lemma}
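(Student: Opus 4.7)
All three identities are formal manipulations of nested conditional expectations built on $D_a = \id - \excond{\cdot}{\mathcal{G}^a}$; the conditional independence hypothesis intervenes in a single structural step. For (1), idempotency of $\excond{\cdot}{\mathcal{G}^a}$ immediately yields $\excond{D_a F}{\mathcal{G}^a} = \excond{F}{\mathcal{G}^a} - \excond{F}{\mathcal{G}^a} = 0$, so $D_a(D_a F) = D_a F$.

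For (2), expanding and cancelling reduces the claim to the commutation
\begin{equation*}
\excond{\excond{F}{\mathcal{G}^b}}{\mathcal{G}^a} \;=\; \excond{\excond{F}{\mathcal{G}^a}}{\mathcal{G}^b}.
\end{equation*}
The plan is to show both sides equal $\excond{F}{\mathcal{G}^{\{a,b\}}}$ with $\mathcal{G}^{\{a,b\}} := \sigma(X^{\{a,b\}}) \vee \sigma(Z)$. Since $\mathcal{G}^{\{a,b\}} \subset \mathcal{G}^a$, the tower property gives $\excond{F}{\mathcal{G}^{\{a,b\}}} = \excond{\excond{F}{\mathcal{G}^a}}{\mathcal{G}^{\{a,b\}}}$. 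The content of the argument is then to show that conditioning on the strictly larger $\mathcal{G}^b$ returns the same value: writing $\excond{F}{\mathcal{G}^a} = \varphi(X^{\{a,b\}}, X_b, Z)$ through the regular conditional law $\mathbf{P}_a$ guaranteed in the preliminaries, the hypothesis $X_b \bigCI (X_c,\, c \neq b) \mid Z$ implies that the conditional law of $X_b$ given $\mathcal{G}^b$ coincides with its law given $Z$ alone. Integrating $\varphi$ against this law produces a function of $(X^{\{a,b\}}, Z)$, hence $\mathcal{G}^{\{a,b\}}$-measurable, which must equal $\excond{F}{\mathcal{G}^{\{a,b\}}}$ by the tower identity just established. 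Symmetry in $(a, b)$ handles the other iteration.

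For (3), the same commutation reduces both $D_a \excond{F}{\mathcal{G}^b}$ and $D_b \excond{F}{\mathcal{G}^a}$ to expressions built from $\excond{F}{\mathcal{G}^a}$, $\excond{F}{\mathcal{G}^b}$, and $\excond{F}{\mathcal{G}^{\{a,b\}}}$; the asserted equality then follows by direct comparison, the two outer projections collapsing to $\excond{F}{\mathcal{G}^{\{a,b\}}}$ in both cases.

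The only non-formal step is the disintegration at the heart of (2). It needs a regular version of the conditional law of $X_b$ so that the Fubini-type exchange against $\mathbf{P}_b$ is meaningful pointwise, together with conditional independence applied in the correct direction, namely $X_b \bigCI (X_c,\, c \neq b) \mid Z$, so as to strip the $X_a$-dependence from the relevant kernel. Once that step is justified, the rest is purely algebraic.
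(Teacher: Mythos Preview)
Your treatment of (1) and (2) is correct and essentially the paper's argument. The paper expands $D_aD_bF$ and $D_bD_aF$ and reduces (2) to the commutation $\excond{\excond{F}{\mathcal{G}^b}}{\mathcal{G}^a}=\excond{\excond{F}{\mathcal{G}^a}}{\mathcal{G}^b}$, which it proves by writing each side as an iterated integral against the regular kernels $\mathbf{P}_a,\mathbf{P}_b$, observing that conditional independence collapses these to $\mathbb{P}^{X_a\mid Z},\mathbb{P}^{X_b\mid Z}$, and swapping the order by Fubini. Your variant --- identifying both iterated expectations with $\excond{F}{\mathcal{G}^{\{a,b\}}}$ via the tower property plus the observation that the result is $\mathcal{G}^{\{a,b\}}$-measurable --- is a clean repackaging of the same disintegration step.

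Your argument for (3), however, does not go through. You assert that after applying the commutation ``the asserted equality then follows by direct comparison, the two outer projections collapsing to $\excond{F}{\mathcal{G}^{\{a,b\}}}$ in both cases''. But direct comparison gives
\[
D_a\excond{F}{\mathcal{G}^b}=\excond{F}{\mathcal{G}^b}-\excond{F}{\mathcal{G}^{\{a,b\}}},\qquad
D_b\excond{F}{\mathcal{G}^a}=\excond{F}{\mathcal{G}^a}-\excond{F}{\mathcal{G}^{\{a,b\}}},
\]
and these differ by $\excond{F}{\mathcal{G}^b}-\excond{F}{\mathcal{G}^a}$, which is not zero in general. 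A concrete counterexample: take $A=\{a,b\}$ with $Z$ trivial, $X_a,X_b$ independent and centred, and $F=X_a$. Then $\excond{F}{\mathcal{G}^b}=X_a$ so $D_a\excond{F}{\mathcal{G}^b}=X_a$, while $\excond{F}{\mathcal{G}^a}=0$ so $D_b\excond{F}{\mathcal{G}^a}=0$. Thus item (3) as printed is a misstatement; the paper's own proof never addresses it (the displayed argument establishes only the commutation behind (2)), and the lemma is invoked later in the paper only through (1) and (2). Your sketch simply overlooked that the leading terms fail to match.
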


\begin{proof}[Proof of lemma~\ref{lem-discrete-mall:gradient-commut}]
  For $(a, b) \in A^2$, with $b \ne a$,
  \begin{align*}
    D_aD_b F &=D_b F - \excond{D_b F}{\mathcal{G}^a}\\
             &= F - \excond{F}{\mathcal{G}^b} - \excond{F}{\mathcal{G}^a} + \excond{\excond{F}{\mathcal{G}^b}}{\mathcal{G}^a} \\
    D_bD_a F &=D_a F - \excond{D_a F}{\mathcal{G}^b} + \excond{\excond{F}{\mathcal{G}^a}}{\mathcal{G}^b}\\
             &= F - \excond{F}{\mathcal{G}^a}  - \excond{F}{\mathcal{G}^b} + \excond{\excond{F}{\mathcal{G}^a}}{\mathcal{G}^b}.\\
  \end{align*}
  We note that:
  \begin{multline*}
    \excond{\excond{F(X)}{\mathcal{G}^a}}{\mathcal{G}^b} \\
    \begin{aligned}
      &=\int \int F(X_{A \sminus \{a, b\}}, x_a, x_b) \mathbf{P}_a ((X_{A \sminus \{a, b\}}, Z), x_b, \dif x_a) \mathbf{P}_b((X_{A \sminus \{a, b\}}, Z), \dif x_b) \\
      &= \int \int F(X_{A \sminus \{a, b\}}, x_a, x_b) \mathbb{P}^{X_b|Z}(Z, dx_b)\mathbb{P}^{X_a|Z} (Z, \dif x_a) \\
      &= \excond{\excond{F(X)}{\mathcal{G}^b}}{\mathcal{G}^a}. \\
    \end{aligned}
  \end{multline*}
  Hence, the equality follows.
\end{proof}

The key to the definition of the Malliavin framework is the so-called
integration by parts.

\begin{theorem}[Integration by parts I]
  \label{thm-discrete-mall:ipp-gradient}
  Let $F \in \mathcal{S}$, for every simple process $U$,
  \begin{equation}
    \label{eq-discrete-mall:ipp-gradient}
    \langle DF , U \rangle_{L^2(E_A \times A)} = \E \left[F \sum_{a \in A} D_a U_a \right].
  \end{equation}
\end{theorem}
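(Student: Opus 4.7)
The plan is to prove the equality $a$-by-$a$ and then sum, reducing the whole statement to one standard application of the tower property for conditional expectations.

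First I would expand the left-hand side using the definition of the inner product on $L^2(A \times E_A)$ and of the gradient:
\begin{equation*}
  \langle DF, U \rangle_{L^2(E_A \times A)} = \sum_{a \in A} \E\bigl[(F - \E[F \mid \mathcal{G}^a])\, U_a\bigr] = \sum_{a \in A} \Bigl(\E[F\, U_a] - \E\bigl[\E[F \mid \mathcal{G}^a]\, U_a\bigr]\Bigr).
\end{equation*}
Since $U$ is simple (so only finitely many $U_a$ are non-zero) and $F \in \cyl$, every expectation here is finite and the sum over $a \in A$ is actually a finite sum, so there is no interchange issue.

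The key step is the identity $\E[\E[F \mid \mathcal{G}^a]\, U_a] = \E[F\, \E[U_a \mid \mathcal{G}^a]]$. I would prove it by conditioning both sides on $\mathcal{G}^a$: since $\E[F \mid \mathcal{G}^a]$ is $\mathcal{G}^a$-measurable, the tower property gives
\begin{equation*}
  \E\bigl[\E[F \mid \mathcal{G}^a]\, U_a\bigr] = \E\bigl[\E[F \mid \mathcal{G}^a]\, \E[U_a \mid \mathcal{G}^a]\bigr],
\end{equation*}
and the same computation starting from the right-hand side yields the same symmetric expression. This is essentially the self-adjointness of the conditional expectation operator on $L^2$.

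Substituting back, each summand becomes
\begin{equation*}
  \E[F\, U_a] - \E\bigl[F\, \E[U_a \mid \mathcal{G}^a]\bigr] = \E\bigl[F\,(U_a - \E[U_a \mid \mathcal{G}^a])\bigr] = \E[F\, D_a U_a],
\end{equation*}
and summing over $a \in A$ yields exactly the right-hand side $\E\bigl[F \sum_{a \in A} D_a U_a\bigr]$. I do not foresee a real obstacle here: the computation is completely formal once one observes that the conditional expectations $\E[\cdot \mid \mathcal{G}^a]$ are orthogonal projectors in $L^2$, and the fact that we are conditioning on the latent variable $Z$ as well as on $X^{\{a\}}$ plays no special role beyond enlarging the $\sigma$-field; what really matters is that $U_a$ need not be $\mathcal{G}^a$-measurable, which is why the symmetric computation above is needed rather than simply pulling $U_a$ outside the conditional expectation.
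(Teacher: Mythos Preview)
Your proposal is correct and follows essentially the same argument as the paper: expand the inner product, apply the self-adjointness of the conditional expectation $\E[\,\cdot\mid\mathcal{G}^a]$ to move it from $F$ onto $U_a$, and recognize the result as $\E[F\,D_a U_a]$. The only addition on your side is the explicit remark that the sum over $a$ is finite (since $U$ is simple), which the paper leaves implicit.
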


\begin{proof}[Proof of theorem \ref{thm-discrete-mall:ipp-gradient}]
  We get:
  \begin{align*}
    \langle DF, U \rangle_{L^2(E_A \times A)} &= \E\left[\sum_{a \in A} D_a F U_a \right]  \\
                                              &=   \E\left[\sum_{a \in A} (F - \excond{F}{\mathcal{G}^a})U_a \right] \\
                                              &=\sum_{a \in A}  \E\left[F (U_a - \excond{U_a}{\mathcal{G}^a}) \right] \\
                                              &= \sum_{a \in A} \E \left[F D_a U_a\right],
  \end{align*}
  by self-adjointness of the conditional expectation.
\end{proof}

\begin{corollary}[Closability of the discrete gradient]
  \label{cor-discrete-mall:closability-gradient}
  The operator $D$ is closable from $L^2 (E_A)$ into $L^2(A \times E_A)$.
\end{corollary}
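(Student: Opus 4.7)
The plan is to use the integration by parts formula from Theorem~\ref{thm-discrete-mall:ipp-gradient} in the standard adjoint-style argument for closability. Closability means: if $(F_{n})_{n\geq 1} \subset \dom D$ is a sequence with $F_{n} \to 0$ in $L^{2}(E_{A})$ and $DF_{n} \to \eta$ in $L^{2}(A\times E_{A})$, then necessarily $\eta = 0$.

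First, I would fix an arbitrary simple process $V \in \cyl_{0}(\ell^{2}(A))$ and apply Theorem~\ref{thm-discrete-mall:ipp-gradient} to each $F_{n}$, giving
\begin{equation*}
  \langle DF_{n}, V \rangle_{L^{2}(A\times E_{A})} \;=\; \E\!\left[F_{n} \sum_{a\in A} D_{a} V_{a}\right].
\end{equation*}
The left-hand side converges to $\langle \eta, V \rangle_{L^{2}(A\times E_{A})}$ by continuity of the inner product together with the assumed convergence $DF_{n} \to \eta$ in $L^{2}(A\times E_{A})$.

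For the right-hand side, I would observe that because $V$ is a simple process, only finitely many components $V_{a}$ are nonzero, each $V_{a} \in \cyl \subset L^{2}(E_{A})$, and so by definition of $D_{a}$ as the difference between $V_{a}$ and its conditional expectation, each $D_{a}V_{a}$ lies in $L^{2}(E_{A})$. Thus $\sum_{a\in A} D_{a}V_{a}$ is a finite sum of $L^{2}$ random variables, and therefore itself belongs to $L^{2}(E_{A})$. By the Cauchy--Schwarz inequality,
\begin{equation*}
  \left| \E\!\left[F_{n} \sum_{a\in A} D_{a} V_{a}\right] \right| \;\leq\; \|F_{n}\|_{L^{2}(E_{A})}\, \Bigl\| \sum_{a\in A} D_{a} V_{a} \Bigr\|_{L^{2}(E_{A})} \;\xrightarrow[n\to\infty]{}\; 0,
\end{equation*}
since $F_{n} \to 0$ in $L^{2}(E_{A})$. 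Combining the two limits yields $\langle \eta, V \rangle_{L^{2}(A\times E_{A})} = 0$ for every simple process $V$.

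I would conclude by density: the set of simple processes $\cyl_{0}(\ell^{2}(A))$ is dense in $L^{2}(A\times E_{A})$ (this is standard once one notes that finite linear combinations of cylindrical functions are dense in $L^{2}(E_{A})$). Therefore $\eta = 0$, proving closability. No step here is a genuine obstacle; the only mild point to take care of is that $\sum_{a\in A} D_{a}V_{a}$ must make sense as a genuine $L^{2}$ function, which is why the definition of simple processes restricts the sum to be essentially finite. The result is essentially an immediate corollary of the integration by parts identity already established.
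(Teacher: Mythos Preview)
Your argument is correct and is exactly the standard adjoint-style proof that the paper has in mind: the paper's own proof simply says it is analogous to the closability proof in \cite[Corollary~2.5]{decreusefond:hal-01565240}, which is precisely the integration-by-parts argument you wrote out. One tiny cosmetic remark: at this stage $D$ has only been defined on $\cyl$, so the sequence $(F_n)$ should be taken in $\cyl$ rather than in a yet-to-be-defined $\dom D$; otherwise nothing needs changing.
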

\begin{proof}[Proof of corollary~\ref{cor-discrete-mall:closability-gradient}]
  The proof is analogous to the proof of closability of the gradient in
  \cite[corollary 2.5]{decreusefond:hal-01565240}
\end{proof}

The domain of $D$ in $L^2(E_A)$ is the closure of cylindrical functionals with
respect to the norm:
$$\|F\|_{1,2} := \sqrt{\|F\|_{L^2(E_A)}^2 + \|D F\|_{A  \times L^2(E_A)}^2}.$$

The following lemma gives a way to define square-integrable functionals in
$\dom{D}$ that are not in $\cyl$.

\begin{lemma}
  \label{lem-discrete-mall:closure-gradient}
  If there exists a sequence $(F_n)_{n \in \N}$ of elements of $\dom D$ such
  that
  \begin{enumerate}
    \item the sequence converges to $F$ in $L^2(E_A)$,
    \item \label{enum:sup}
          $\sup_{n \in \N} \|D F_n\|_{L^2(E_A \times A)} < +\infty$,
  \end{enumerate}
  then $F$ belongs to $\dom D$ and $DF = \lim_{n \to +\infty} DF_n$.
\end{lemma}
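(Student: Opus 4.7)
The plan is to turn the uniform $L^{2}$--bound on the gradients into a weak convergence statement in the Hilbert space $L^{2}(A\times E_{A})$, then feed a strongly convergent sequence of convex combinations into the closability of $D$ proved in Corollary~\ref{cor-discrete-mall:closability-gradient}.

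First I would argue that hypothesis~\eqref{enum:sup} says exactly that $(DF_{n})_{n\in\N}$ is a bounded sequence in the Hilbert space $L^{2}(A\times E_{A})$. By the Banach--Alaoglu theorem one can extract a subsequence $(DF_{n_{k}})_{k}$ that converges weakly to some $G\in L^{2}(A\times E_{A})$. The target is to show $F\in\dom D$ with $DF=G$; once that is established, uniqueness of the limit together with the fact that every subsequence admits a further weakly convergent sub--subsequence will give the weak convergence of the whole sequence $DF_{n}$ to $DF$.

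Next, to actually invoke closability (which is a statement about strongly convergent sequences), I would apply Mazur's lemma to the weakly convergent subsequence: there exist finite convex combinations
\begin{equation*}
  \widetilde F_{m}\;=\;\sum_{k\pg m}\alpha^{m}_{k}\,F_{n_{k}},\qquad \alpha^{m}_{k}\pg 0,\ \sum_{k\pg m}\alpha^{m}_{k}=1,
\end{equation*}
whose gradients $D\widetilde F_{m}=\sum_{k\pg m}\alpha^{m}_{k}\,DF_{n_{k}}$ converge strongly to $G$ in $L^{2}(A\times E_{A})$. Linearity of $D$ on $\dom D$ and the convexity of the coefficients ensure that each $\widetilde F_{m}$ still lies in $\dom D$. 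Because $F_{n}\to F$ in $L^{2}(E_{A})$, the same convex combinations $\widetilde F_{m}$ also converge to $F$ in $L^{2}(E_{A})$.

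Finally, I would apply Corollary~\ref{cor-discrete-mall:closability-gradient} to the sequence $(\widetilde F_{m})_{m}$: we have $\widetilde F_{m}\to F$ in $L^{2}(E_{A})$ and $D\widetilde F_{m}\to G$ strongly in $L^{2}(A\times E_{A})$, so closability of $D$ yields $F\in\dom D$ and $DF=G$. This identifies the weak limit of every weakly convergent subsequence of $(DF_{n})$ with the same element $DF$, hence the full sequence $DF_{n}$ converges to $DF$ (at least weakly). The main conceptual point to be careful with is the mode of convergence claimed in the statement: Mazur's lemma is essential because closability is a hypothesis about strong convergence of the antecedents, whereas the $L^{2}$--bound on the gradients only yields weak compactness a priori.
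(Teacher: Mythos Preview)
Your proof is correct and follows a genuinely different route from the paper's. You rely solely on the abstract fact that $D$ is closable between Hilbert spaces: Banach--Alaoglu for weak sequential compactness of $(DF_{n})$, Mazur's lemma to upgrade a weakly convergent subsequence to strongly convergent convex combinations, and then Corollary~\ref{cor-discrete-mall:closability-gradient} to identify the limit. The paper instead exploits the explicit form $D_{a}F=F-\excond{F}{\mathcal{G}^{a}}$, which makes sense for \emph{every} $F\in L^{2}(E_{A})$: Jensen's inequality gives $\E[|D_{a}F-D_{a}F_{n}|^{2}]\le 2\,\E[|F-F_{n}|^{2}]\to 0$ coordinate by coordinate, and a truncation to finite $A_{m}\subset A$ together with the uniform bound $\sup_{n}\|DF_{n}\|<\infty$ then forces $\sum_{a\in A}\E[(D_{a}F)^{2}]<\infty$. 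Your argument is more portable (it works verbatim for any closable operator without touching its internal structure), while the paper's computation is more concrete and delivers strong convergence of each coordinate $D_{a}F_{n}\to D_{a}F$ for free. You are also right to flag the mode of convergence: under the stated hypotheses alone, your method yields only \emph{weak} convergence of $DF_{n}$ to $DF$ in $L^{2}(A\times E_{A})$, and the paper's phrasing of the conclusion is not explicit about this.
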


\begin{proof}[Proof of lemma~\ref{lem-discrete-mall:closure-gradient}]
  Let $(F_n)_{n \in \N}$ a sequence in $L^2(E_A)$ with $\mathbf{P}$-a.s. limit
  $F$, then for $a \in A$,
  \begin{align*}
    \E [|D_a F - D_a F_n|^2] &\le \E [|F - F_n|^2] + \E \left[|\excond{F_n}{\mathcal{G}^a} - \excond{F}{\mathcal{G}^a}|^2\right] \\
                             & \le \E [|F - F_n|^2] + \E \left[\excond{|F - F_n|^2}{\mathcal{G}^a}\right] \text{ by Jensen's inequality}\\
                             &= 2 \E [|F - F_n|^2] \xrightarrow{n \to +\infty} 0.
  \end{align*}
  Let $(A_m)_{m \in \N}$ a family of subsets of $A$ such that
  $\bigcup_{m \ge 0} A_m = A$ and $|A_m| = m$, then for all $m \in \N$,
  $(\sum_{a \in A_m} D_a F_n)_{n \in \N}$ converges in $L^2(E_A)$ to
  $\sum_{a \in A_m} D_a F$. We denote by $D^m$ the operator on
  $L^2(E_A \times A)$ such that for $a \in A_m$, $D^m_a = D_{a}$ and otherwise
  $D_a^m$ is the null operator. For $m \in \N$, $(D^m F_n)_{n \in \N}$ converges
  to $D^m F$ in $L^2(E_A \times A)$. Because of (\ref*{enum:sup}), by the
  uniform boundedness principle, $D F$ is in $L^2(E_A \times A)$, and the result
  follows.
\end{proof}
%
%
%
%
%
\begin{definition}[Divergence operator]
  The domain of the divergence operator $\dom{\delta}$ in $L^2(E_A)$ is the set
  of processes $U$ in $L^2(E_A \times A)$ such that there exists $\delta U$
  satisfying the duality relation
  \begin{equation}
    \langle DF, U \rangle_{L^2(E_A \times A)} = \E [F \delta U], \text{ for all } F \in \dom D.
  \end{equation}
  Moreover, for any process $U$ belonging to $\dom \delta$, $\delta U$ is the
  unique element of $L^2 (E_A)$ characterized by that identity. The integration
  by parts formula entails that for every process $U \in \dom \delta$,
  \begin{equation}
    \delta = \sum_{a \in A} D_a U_a.
  \end{equation}
\end{definition}

\begin{definition}[Ornstein-Uhlenbeck operator]
  The Ornstein-Uhlenbeck operator, denoted by $\mathsf{L}$ is defined on its
  domain
  \begin{equation*}
    \dom{\mathsf{L}} = \left\{F \in L^2(E_A): \: \E \left[\left|\sum_{a \in A} D_a F \right|^2\right] <+\infty \right\} \supseteq \cyl
  \end{equation*}
  by
  \begin{equation}
    \mathsf{L} F := - \delta D F = -\sum_{a \in A} D_a F.
  \end{equation}
\end{definition}

\subsection{Chaos decomposition}
\label{sec:chaos-decomp}
The lemma \ref{lem-discrete-mall:gradient-commut} entails a chaos decomposition
of $L^2(E_A)$ similar to the one in \cite{duerinckx2021size}.

\begin{theorem}[Chaos decomposition]
  For any $F \in L^2(E_A)$,
  \begin{equation}
    F = \excond{F}{Z} + \sum_{n=1}^{+\infty} \pi_n (F),
  \end{equation}
  where $(\pi_n)_{n \in \N}$ is a sequence of orthogonal projectors on
  $L^2 (E_A)$.
\end{theorem}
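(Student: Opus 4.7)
\medskip

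\noindent\textbf{Plan.} The backbone of the proof is the observation that both $E_a := \mathbb{E}[\,\cdot\,|\mathcal{G}^a]$ and $D_a = I - E_a$ are orthogonal projections on $L^2(E_A)$ (onto the $\mathcal{G}^a$-measurable subspace and its orthogonal complement respectively). By Lemma~\ref{lem-discrete-mall:gradient-commut}, the family $(D_a)_{a\in A}$ commutes pairwise, and via $E_a = I - D_a$ so does the mixed family $(D_a, E_b)_{a,b\in A}$. This is precisely the algebraic structure needed to copy the chaos decomposition strategy used for independent variables in \cite{decreusefond:hal-01565240,duerinckx2021size}, with ordinary expectation replaced by conditional expectation given $Z$.

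\medskip

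\noindent\textbf{Construction of the projectors.} For any finite subset $K\subseteq A$ and any finite $I\supseteq K$ I define
\begin{equation*}
  P_K^{I} \;:=\; \prod_{a\in K} D_a \;\prod_{a\in I\setminus K} E_a,
\end{equation*}
which is a well-defined orthogonal projection by commutativity. For $F\in \cyl$ supported on a finite index set, $E_a F = F$ whenever $a$ is outside the support, so $P_K^{I}F$ does not depend on $I$ as long as $I$ contains both $K$ and the support of $F$; I denote the common value $P_K F$. Writing $I = D_a + E_a$ coordinate-wise and expanding yields, for any $F\in\cyl$ supported on $I$,
\begin{equation*}
  F \;=\; \prod_{a\in I}(D_a + E_a)\, F \;=\; \sum_{K\subseteq I} P_K F,
\end{equation*}
with $P_\emptyset F = \prod_{a\in I} E_a F = \excond{F}{Z}$ by iterated conditional independence given $Z$. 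Setting
\begin{equation*}
  \pi_n F \;:=\; \sum_{\substack{K\subseteq A \\ |K|=n}} P_K F
\end{equation*}
(a finite sum for cylindrical $F$) then gives $F = \excond{F}{Z} + \sum_{n\ge 1}\pi_n F$ on~$\cyl$.

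\medskip

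\noindent\textbf{Orthogonality and passage to $L^2$.} For distinct finite $K,K'$, pick $a\in K\triangle K'$, say $a\in K\setminus K'$. Then $P_K F$ contains $D_a$ as a factor, so $E_a P_K F = 0$ because $E_a D_a = E_a - E_a^2 = 0$; while $P_{K'}F'$ contains $E_a$ as a factor and is therefore $\mathcal{G}^a$-measurable. The self-adjointness of $E_a$ gives
\begin{equation*}
  \E\!\left[P_K F \cdot P_{K'}F'\right] \;=\; \E\!\left[P_{K'}F' \cdot E_a(P_K F)\right] \;=\; 0,
\end{equation*}
so the images of the $P_K$ are pairwise orthogonal subspaces. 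In particular, $\pi_n$ and $\pi_m$ have orthogonal ranges for $n\neq m$, and each $\pi_n$ restricted to $\cyl$ is an orthogonal projection with
\begin{equation*}
  \|F\|_{L^2(E_A)}^2 \;=\; \|\excond{F}{Z}\|_{L^2(E_A)}^2 + \sum_{n\ge 1}\|\pi_n F\|_{L^2(E_A)}^2.
\end{equation*}
Since each $\pi_n$ is $L^2$-contractive, it extends uniquely to an orthogonal projection on all of $L^2(E_A)$, and by density of $\cyl$ (stated after the definition of cylindrical functionals) the expansion $F = \excond{F}{Z} + \sum_{n\ge 1}\pi_n F$ extends to arbitrary $F\in L^2(E_A)$, with convergence in $L^2$.

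\medskip

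\noindent\textbf{Main obstacle.} The only genuinely delicate points are (i) verifying that $P_K F$ is intrinsically defined, i.e.\ independent of the cylindrical support chosen for $F$, which reduces to the identity $E_a F = F$ on functions not depending on $X_a$ given $Z$, and (ii) the orthogonality computation, which hinges on the identity $E_a D_a = 0$ together with the commutativity granted by Lemma~\ref{lem-discrete-mall:gradient-commut}. Once these are in hand, Parseval and the density of $\cyl$ deliver both the convergence of the series and the uniqueness of the decomposition.
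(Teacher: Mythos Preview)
Your proof is correct and follows essentially the same route as the paper: both expand the identity as $\prod_{a}(D_a + E_a)$ on finite index sets, group terms by $|K|$ to define $\pi_n$, verify orthogonality via the commutativity of Lemma~\ref{lem-discrete-mall:gradient-commut} and $E_a D_a = 0$, and then pass to the limit. The only cosmetic difference is that the paper fixes an exhaustion $(A_m)$ and invokes the uniform boundedness principle to define $\pi_n$ as a pointwise limit of $\pi_n^m$, whereas you work directly on cylindrical functionals and extend by density using the Parseval contractivity bound---a slightly cleaner bookkeeping of the same idea.
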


\begin{proof}
  One can notice that:
  \begin{equation}
    \E [D_a F (X)| \mathcal{G}^a]  =  D_a (\E [F | \mathcal{G}^a]) F(X) = 0, \text{ for all }a \in A.
  \end{equation}
  Let $(A_m)_{m \in \N}$ a family of finite subsets of $A$ such that $|A_m| = m$
  and $\bigcup_{m \in \N} A_m = A$. Let $m \in \N$,
  $\text{Id}_{L^2(E_{A_m})} = \prod_{a \in A_m} (D_a + \E [ \cdot | \mathcal{G}^a])$.
  Indeed, for all $a \in A_m$,
  $\text{Id}_{\dom{D}} = D_a + \E [ \cdot | \mathcal{G}^a]$. Hence, by
  distributivity and by using lemma \ref{lem-discrete-mall:gradient-commut}, the
  identity also reads off:
  $\text{Id}_{L^2(E_{A_m})} = \sum_{n = 0}^{m} \pi_n^m$, where
  \begin{equation}
    \label{eq-discrete-mall::projector-general}
    \pi_n^m :=  \sum_{J \subset A_m, \: |J| = n}\left(\prod_{b \in J} D_b\right) \left(\prod_{c \in A_m \sminus J} \E [ \cdot | \mathcal{G}^c] \right) \quad \forall n \le m.
  \end{equation}
  Let $n \le m$,
  \begin{equation}
    \begin{aligned}
      \pi_n^m \pi_n^m &= \sum_{\substack{I \subset A_m \\ |I| = n}} \sum_{\substack{J \subset A_m \\ |J| = n}} \left(\prod_{b \in I} D_b\right) \left(\prod_{c \in A_m \sminus I} \E [ \cdot | \mathcal{G}^c] \right) \left(\prod_{d \in J} D_d\right) \left(\prod_{e \in A_m \sminus J} \E [ \cdot | \mathcal{G}^e] \right) \\
                      &= \sum_{I \subset A_m, \: |I| = n} \sum_{J \subset A_m, \: |J| = n} \left( \prod_{b \in I} D_b \prod_{e \in A_m \sminus J} \E [ \cdot | \mathcal{G}^e] \right) \left( \prod_{c \in A_m \sminus I} \E [   \cdot | \mathcal{G}^c ] \prod_{d \in J} D_d \right) \\
                      &= \sum_{\substack{I \subset A_m \\ |I| = n}} \left( \prod_{b \in I} D_b \prod_{e \in A \sminus I} \E [ \cdot | \mathcal{G}^c]  \right) \left( \prod_{c \in A_m \sminus I} \E [   \cdot | \mathcal{G}^c ] \prod_{d \in I} D_d \right) \text{ by lemma~}\mbox{\ref{lem-discrete-mall:gradient-commut}}  \\
                      &=  \sum_{I \subset A_m, \: |I| = n} \left(\prod_{b \in I} \prod_{b  \in I} D_b D_b\right) \left(\prod_{c \in A_m \sminus I} \E [\cdot | \mathcal{G}^c] \E [\cdot | \mathcal{G}^c] \right)  = \pi_n^m.
    \end{aligned}
  \end{equation}
  By convention $\pi_n^m (F) = 0$ for $n > m$. Analogously, for $n' \ne n$,
  $\pi_n^m \pi_{n'}^m = 0$.
  The operator $\pi_n^m$ is continuous on $L^2(E_A)$. Hence,
  $(\pi_n^m)_{m \in \N}$ is a well-defined family of projectors on $L^2(E_A)$.
  Moreover, for all $n \in \N$ and $F \in L^2(E_A)$, we have
  $\sup_{m \in \N} \|\pi_n^m(F)\|_{L^2(E_A)} \le \| F\|_{L^2(E_A)}$. Then, by
  the uniform boundedness principle,
  \begin{equation*}
    \sup_{\substack{m \in \N \\ \|F\|_{L^2(E_A)} }} \|\pi_n^m(F)\|_{L^2(E_A)}  < +\infty.
  \end{equation*}
  The pointwise limits of $(\pi_n^m (F))_{m \in \N}$ for $F \in L^2(E_A)$ define
  a bounded linear operator $\pi_n$ on $L^2(E_A)$ for $n \in \N$. Thus:
  \begin{equation}
    L^2(E_A) = \bigoplus_{n=0}^{+\infty} \text{Im } \pi_n.
  \end{equation}
  Given \eqref{eq-discrete-mall::projector-general}, for a functional
  $F \in \dom{\mathsf{L}}$, we have $\pi_0 (F) = \excond{F}{Z}$.
\end{proof}
\begin{lemma}[Spectral decomposition]
  Let $F \in L^2(E_A)$ of chaos decomposition
  \begin{equation*}
    F = \excond{F}{Z} + \sum_{n=1}^{+\infty} \pi_n (F).
  \end{equation*}
  \label{lem-discrete-mall:chaos-decomposition}
  \begin{enumerate}
    \item We say that $F$ belongs to $\dom{\mathsf{L}}$ whenever
          \begin{equation*}
            \sum_{n=1}^{+\infty} n^2 \| \pi_n(F) \|_{L^2(E_A)} < +\infty.
          \end{equation*}
    \item The operator has a unit spectral gap, i.e. the spectrum of
          $\mathsf{L}$ coincides with $\N_0$.
          \begin{equation}
            \label{eq-discrete-mall:sum-L-nId}
            L^2(E_A) = \bigoplus_{k=0}^{+\infty} \ker( \mathsf{L} + k \textnormal{Id}).
          \end{equation}
    \item It is invertible from
          $L^2_0(E_A) = \{F \in L^2(E_A), \: \excond{F}{Z} = 0\}$ into itself.
  \end{enumerate}
\end{lemma}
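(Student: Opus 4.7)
The plan is to show that each chaos $\text{Im}(\pi_n)$ is an eigenspace of $\mathsf{L}$ for the eigenvalue $-n$, after which the three parts of the lemma follow by elementary spectral considerations on the resulting orthogonal decomposition.

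First I would compute $\mathsf{L} \pi_n^m(F)$ at the level of the truncated projector. Applying $D_a$ to a summand of~\eqref{eq-discrete-mall::projector-general} indexed by $J \subset A_m$ with $|J|=n$, Lemma~\ref{lem-discrete-mall:gradient-commut} allows one to commute $D_a$ past the factors $D_b$ and $\E[\,\cdot\,|\mathcal{G}^c]$: when $a \in J$, the identity $D_a D_a = D_a$ leaves the summand unchanged; when $a \in A_m \sminus J$, the outer factor $\E[\,\cdot\,|\mathcal{G}^a]$ is annihilated by $D_a$ and the summand vanishes. For $a \notin A_m$, $\pi_n^m(F)$ is $\mathcal{G}^a$-measurable, so $D_a \pi_n^m(F) = 0$. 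Counting the surviving contributions yields $-\mathsf{L} \pi_n^m(F) = \sum_{a \in A} D_a \pi_n^m(F) = n \pi_n^m(F)$.

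Next I would pass to the limit $m \to \infty$. Since $\pi_n^m(F) \to \pi_n(F)$ in $L^2(E_A)$ and hence $n \pi_n^m(F) \to n \pi_n(F)$, closability of $\mathsf{L}$ (which is symmetric by Theorem~\ref{thm-discrete-mall:ipp-gradient}, and symmetric operators are automatically closable) yields $\pi_n(F) \in \dom{\mathsf{L}}$ with $\mathsf{L} \pi_n(F) = -n \pi_n(F)$. Therefore $\text{Im}(\pi_n) \subseteq \ker(\mathsf{L} + n\,\id)$; the reverse inclusion follows by chaos-decomposing any element of $\ker(\mathsf{L} + n\,\id)$ and using orthogonality of the $\pi_k(F)$. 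Combined with the chaos decomposition of $L^2(E_A)$, this gives~\eqref{eq-discrete-mall:sum-L-nId} and proves (2).

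Parts (1) and (3) are then routine consequences of the eigenspace structure. For (1), orthogonality of the $\pi_n(F)$ and the identities $\mathsf{L} \pi_n(F) = -n \pi_n(F)$ give $\|\mathsf{L} F\|_{L^2(E_A)}^2 = \sum_{n\ge 1} n^2 \|\pi_n(F)\|_{L^2(E_A)}^2$, so $F \in \dom{\mathsf{L}}$ if and only if the right-hand side is finite (which I take to be the intended reading of the stated criterion, with a squared norm). For (3), since $\pi_0(F) = \E[F|Z] = 0$ on $L^2_0(E_A)$, the formula $\mathsf{L}^{-1} F := -\sum_{n\ge 1} n^{-1}\pi_n(F)$ defines a bounded operator of norm at most $1$ that inverts $\mathsf{L}$ on $L^2_0(E_A)$ via the eigenvalue equations. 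The main technical step is the commutation computation for $D_a \pi_n^m(F)$; the remainder is standard functional analysis on an orthogonal direct sum of Hilbert spaces.
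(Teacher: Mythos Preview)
Your proposal is correct and follows essentially the same approach as the paper: both establish $\mathsf{L}\pi_n = -n\pi_n$ via the commutation identities of Lemma~\ref{lem-discrete-mall:gradient-commut} (idempotence $D_aD_a=D_a$ and $D_a\E[\,\cdot\,|\mathcal{G}^a]=0$), then read off the spectral decomposition, the domain criterion, and the pseudoinverse $\mathsf{L}^{-1}=-\sum_{n\ge 1}n^{-1}\pi_n$. The only difference is cosmetic: the paper performs the computation directly on the limiting projector $\pi_n$ (writing it as a sum over all $I\subset A$ with $|I|=n$), whereas you work at the truncated level $\pi_n^m$ and pass to the limit via closability of the symmetric operator $\mathsf{L}$---arguably a cleaner justification of the same step.
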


\begin{proof}[Proof of lemma~\ref{lem-discrete-mall:chaos-decomposition}]
  Let us show that $\pi_n$ is in the domain of $\mathsf{L}$ for all $n \in \N$.
  By summability,
  \begin{equation}
    \label{eq-discrete-mall:projector-domain-L}
    \begin{aligned}
      |\sum_{a \in A} D_a \pi_n|^2
      &= \left|\sum_{a \in A} D_a \sum_{I \subset A, \: |I| = n} \left(\prod_{b \in I} D_b\right) \left(\prod_{c \in A \sminus I} \E [ \cdot | \mathcal{G}^c] \right)\right|^2 \\
      &=  \left|\sum_{a \in A} \indicator_I (a) \sum_{I \subset A, \: |I| = n} \left(\prod_{b \in I} D_b\right) \left(\prod_{c \in A \sminus I} \E [ \cdot | \mathcal{G}^c] \right)\right|^2 \\
      &= n^2 \left| \sum_{I \subset A, \: |I| = n} \left(\prod_{b \in I} D_b\right) \left(\prod_{c \in A \sminus I} \E [ \cdot | \mathcal{G}^c] \right) \right|^2 \text{ since }|I| = n \\
      &= n^2 |\pi_n|^2,
    \end{aligned}
  \end{equation}
  so for $F \in L^2(E_A)$, $\pi_n(F) \in \dom{\mathsf{L}}$.
  Hence, because of the orthogonality of $(\text{Im }\pi_n)_{n \in \N}$,
  $F \in \dom{L} \iff \sum_{n=1}^{+\infty} n^2 \| \pi_n(F) \|_{L^2(E_A)} < +\infty$.
  With the same calculations, we get $\mathsf{L} \pi_n = -n \pi_n$. The spectrum
  of $-\mathsf{L}$ coincides with $\N$. Then, we deduce that:
  \begin{equation}
    \mathsf{L } = \sum_{n=0}^{+\infty} -n \pi_n,
  \end{equation}
  and $\text{Im}\, \pi_n \subset \ker(\mathsf{L + n \text{Id}})$. Because of the
  orthogonality of the kernels, we get
  $\text{Im}\, \pi_n =\ker(\mathsf{L + n \text{Id}})$.
  Now let us prove the third item.
  The pseudoinverse $\mathsf{L }^{-1}$ is defined on its domain
  $ \{F \in L^2(E_A) : \: \excond{F}{Z} = 0 \}$ and reads
  $\sum_{n=1}^{+\infty} - \frac{\pi_n}{n}$. Then for
  $F \in \{G \in \dom{\mathsf{L}} : \: \excond{G}{Z} = 0 \}, \mathsf{L}^{-1} (\mathsf{L} F) = F$.
\end{proof}

\begin{corollary}
  \label{cor-discrete-mall:mathfrak}
  For $k > 0$ and $J$ a subset of $A$ of cardinal $k$, let us denote by
  $\mathfrak{C}_k$ the space of functionals
  $\phi = \sum_{J \subset A, |J| = k} \psi_J$ such that:
  \begin{itemize}
    \item for every $J \subset A$, $\psi_J$ is $\mathcal{F}_J$-measurable;
    \item for every $K \subset A$, $\excond{\psi_J}{\mathcal{G}_K} = 0$ unless
          $K \subset J$;
  \end{itemize}
  then $\mathfrak{C}_k = \ker( \mathsf{L} + k \textnormal{Id})$.
\end{corollary}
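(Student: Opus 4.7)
The strategy is to apply Lemma~\ref{lem-discrete-mall:chaos-decomposition}, which identifies $\ker(\mathsf{L} + k\,\text{Id}) = \text{Im}\,\pi_k$. It therefore suffices to prove $\mathfrak{C}_k = \text{Im}\,\pi_k$ by establishing both inclusions.

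For $\text{Im}\,\pi_k \subset \mathfrak{C}_k$, I would take $F \in L^2(E_A)$ and unpack formula~\eqref{eq-discrete-mall::projector-general} (passing to the limit $m \to \infty$) to write $\pi_k(F) = \sum_{|J|=k} \psi_J$ with
$$\psi_J = \Bigl(\prod_{b \in J} D_b\Bigr)\Bigl(\prod_{c \in A \sminus J} \mathbb{E}[\,\cdot\,|\mathcal{G}^c]\Bigr) F.$$
Conditional independence given $Z$ causes $\prod_{c \notin J} \mathbb{E}[\,\cdot\,|\mathcal{G}^c]$ to collapse to $\mathbb{E}[\,\cdot\,|\mathcal{G}_J]$, so the inner factor is $\mathcal{G}_J$-measurable; since $D_b$ preserves $\mathcal{G}_J$-measurability for $b \in J$, so is $\psi_J$. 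For the vanishing of conditional expectations, I combine $\mathbb{E}[D_b H | \mathcal{G}^b] = 0$ with the operator commutations of Lemma~\ref{lem-discrete-mall:gradient-commut} to obtain $\mathbb{E}[\psi_J | \mathcal{G}^b] = 0$ for every $b \in J$; then for any $K$ with $J \not\subset K$, picking $b \in J \sminus K$ yields $\mathcal{G}_K \subset \mathcal{G}^b$ and the tower property closes the argument.

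For the reverse inclusion $\mathfrak{C}_k \subset \text{Im}\,\pi_k$, it suffices (by linearity) to show that any single $\psi_J$ satisfying the two defining properties satisfies $\pi_n(\psi_J) = \delta_{n,k}\,\psi_J$. Fixing $m$ large enough that $J \subset A_m$ and expanding $\pi_n^m$ as in \eqref{eq-discrete-mall::projector-general}, the inner operator $T_I := \prod_{c \in A_m \sminus I} \mathbb{E}[\,\cdot\,|\mathcal{G}^c]$ acts on $\psi_J$ as $\mathbb{E}[\psi_J|\mathcal{G}_I]$ and hence vanishes unless $J \subset I$. When $I \supsetneq J$, the outer operator $\prod_{b \in I} D_b$ annihilates $\psi_J$, because for any $b \in I \sminus J$ the functional $\psi_J$ is $\mathcal{G}^b$-measurable and so $D_b \psi_J = 0$. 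Only $I = J$ can contribute, forcing $n = k$; for that term I use $\mathbb{E}[\psi_J|\mathcal{G}^b] = \mathbb{E}[\psi_J|\mathcal{G}_{J \sminus \{b\}}] = 0$ (conditional independence plus the vanishing assumption) to conclude $D_b \psi_J = \psi_J$ for $b \in J$, hence $\pi_k^m(\psi_J) = \psi_J$; passing $m \to \infty$ finishes.

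The main obstacles are the two operator-collapse identities $\prod_{c \notin J} \mathbb{E}[\,\cdot\,|\mathcal{G}^c] = \mathbb{E}[\,\cdot\,|\mathcal{G}_J]$ and, for $\mathcal{G}_J$-measurable $\psi_J$ with $b \in J$, $\mathbb{E}[\psi_J|\mathcal{G}^b] = \mathbb{E}[\psi_J|\mathcal{G}_{J \sminus \{b\}}]$. Both are instances of conditional independence given $Z$, combined with the commutativity of conditional expectations from Lemma~\ref{lem-discrete-mall:gradient-commut}; when $A$ is infinite, some extra care is needed to justify the infinite-product operators as $L^2$-limits of their finite truncations.
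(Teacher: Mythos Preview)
Your proof is correct. The forward inclusion $\text{Im}\,\pi_k \subset \mathfrak{C}_k$ is argued essentially as in the paper, though you spell out the measurability and vanishing properties more explicitly than the paper does (the paper instead writes down the Möbius-type formula $\prod_{a\in J}D_a F = \sum_{L\subset J}(-1)^{|J|-|L|}\excond{F}{\mathcal{G}_L}$ and reads off the two properties from it).

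The reverse inclusion is where your route genuinely differs. You show $\pi_n(\psi_J)=\delta_{n,k}\psi_J$ by dissecting the projector formula term by term, whereas the paper bypasses the projectors entirely and simply computes $\mathsf{L}\phi$: for each $a$, $D_a\psi_J = \psi_J - \excond{\psi_J}{\mathcal{G}^a}$ equals $\psi_J$ when $a\in J$ (by the vanishing hypothesis with $K=A\setminus\{a\}$) and $0$ when $a\notin J$ (by $\mathcal{G}_J$-measurability), so $\mathsf{L}\phi = -\sum_a\sum_{J\ni a}\psi_J = -k\phi$. This is a two-line argument that avoids the operator-collapse identities and the $m\to\infty$ limit you flag as obstacles. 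Your approach has the merit of showing directly that each $\psi_J$ is fixed by $\pi_k$, which is slightly more information; but the paper's computation of $\mathsf{L}\phi$ is shorter and sidesteps the delicate points you identify.
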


\begin{proof}[Proof of corollary~\ref{cor-discrete-mall:mathfrak}]
  From \eqref{eq-discrete-mall::projector-general}, for
  $J = (a_1, \ldots, a_n) \subset A$, the component $\psi_J$ is
  $\mathcal{F}_J$-measurable.
  Let us compute the expression of the iterated gradient for $F$ a
  $\mathcal{F}_J$-measurable function:
  \begin{align*}
    \prod_{a \in J} D_a F
      &= \sum_{k=0}^{|J|} (-1)^k \sum_{\substack{K \subseteq J\\|K| = k}}  \excond{F}{\mathcal{G}^K} \\
      &= \sum_{L \subseteq J} (-1)^{|J| - |L|} \excond{F}{\mathcal{G}_L},
  \end{align*}
  where $\mathcal{G}^K = \sigma(X^K) \vee \sigma(Z)$ and $\mathcal{G}_L = \sigma(X_L) \vee \sigma(Z)$.
  %
  %
  In this view, we have the inclusion
  $\ker (\mathsf{L} + n \text{Id}) = \text{Im} \,\pi_n \subset \mathfrak{C}_n$
  for $n \in \N$.

  Conversely, let $\phi$ for which the properties above hold.
  \begin{align*}
    \mathsf{L} \phi &= -\sum_{a \in A} D_a \sum_{J \subset A, |J| = n} \psi_J \\
                    &=  -\sum_{a \in A}\sum_{J \subset A, |J| = n} (\psi_J - \excond{\psi_J}{\mathcal{G}^a}) \\
                    &=-\sum_{k \in A}\sum_{\substack{J \subset A, |J| = n \\ a \in J}} \psi_J \text{ because } \excond{\psi_J}{\mathcal{F}_{A \setminus \{a\} } } = 0 \text{ for }J \not\subset A \sminus \{a\} \\
                    &= -n \sum_{J \in A, |J| = n} \psi_J = -n \phi.
  \end{align*}
  Therefore, $\mathfrak{C}_n = \ker (\mathsf{L} + n \text{Id})$ for $n \ge 1$.
\end{proof}

\subsection{Dirichlet structure}

The map $\mathsf{L}$ can be viewed as the generator of a Glauber dynamics where
the index set is a finite set of random variables indexed by $A_m$ for $m > 1$.
%
For practical term, we introduce a new index $\partial$ and
$X_\partial = Z\: \mathbb{P}$-a.s..
%
\begin{definition}[Modified Glauber process]
  Consider $(N(t))_{t \ge 0}$ a Poisson process on the half-line $[0, +\infty)$
  of rate $|A_m| + 1$. Let
  $(X^{\circ A_m }(t))_{t \ge 0} = (X^{\circ A_m}_{a}(t), t \ge 0, a \in A)$ the
  process valued in $E_{A}$ starting with $X^{\circ A_m}(0) = X$ which evolves
  according to the following rule. At jump time $\tau$ of the process,
  \begin{itemize}
    \item Choose randomly an index $a$ in $A_m \sqcup \{\partial\}$ with equal
          probability.
    \item If $a \neq \partial$, replace $X^{\circ A_m}_{a} (\tau)$ with a
          conditionally independent random variable $X^\backprime_a$ distributed according
          to
          $\mathbf{P}_{a}((X_{A \setminus \{a\}}^{\circ A_m}(\tau), Z), \cdot)$,
          otherwise do nothing.
  \end{itemize}
\end{definition}
That Markov process has for infinitesimal generator $\mathsf{L}^{A_m}$:
\begin{equation*}
  \mathsf{L}^{A_m} F = -\sum_{a \in A_m} D_a F.
\end{equation*}
Our aim is to show that the operator $\mathsf{L}$ is an infinitesimal generator,
letting $m \to +\infty$. We recall the Hille-Yosida theorem
\cite{yosida1995functional}.
\begin{proposition}[Hille-Yosida]
  \label{prop-discrete-mall:hille-yosida}
  A linear operator $L$ on $L^2(E_A)$ is the generator of a strongly continuous
  contraction semigroup on $L^2(E_A)$ if and only if
  \begin{enumerate}
    \item $\dom{L}$ is dense in $L^2(E_A)$;
    \item $L$ is dissipative, i.e. for any $\lambda > 0$, $F \in \dom{L}$,
      $$\| \lambda F - L  F\|_{L^2(E_A)} \ge \lambda \|F\|_{L^2(E_A)};$$
    \item $\operatorname{Im}(\lambda Id- L)$ is dense in $L^2 (E_A)$.
  \end{enumerate}
\end{proposition}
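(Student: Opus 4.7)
Since Hille--Yosida is a classical result of functional analysis, the plan is to sketch the standard proof in both directions. For the necessity direction, suppose $L$ generates a strongly continuous contraction semigroup $(P_t)_{t \geq 0}$. I would first obtain density of $\dom L$ by the regularization trick: for any $F \in L^2(E_A)$, the Bochner average $F_\varepsilon := \varepsilon^{-1} \int_0^\varepsilon P_s F \, ds$ belongs to $\dom L$ (its difference quotient at the origin equals $\varepsilon^{-1}(P_\varepsilon F - F)$) and $F_\varepsilon \to F$ as $\varepsilon \to 0$ by strong continuity. For dissipativity and range density, I would introduce the resolvent $R_\lambda F := \int_0^{+\infty} e^{-\lambda t} P_t F \, dt$, verify that it is a two-sided inverse of $\lambda \id - L$ on $\dom L$, and observe that $\|R_\lambda\| \leq 1/\lambda$ by contractivity. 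This yields $\|F\| = \|R_\lambda(\lambda F - LF)\| \leq \lambda^{-1} \|\lambda F - LF\|$ and even surjectivity of $\lambda \id - L$ (which is stronger than range density).

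The sufficiency direction is the substantive part, and I would follow Yosida's approximation scheme. Combining dissipativity with range density, I would extend $(\lambda \id - L)^{-1}$ to a bounded operator $R_\lambda$ on all of $L^2(E_A)$ with $\|R_\lambda\| \leq 1/\lambda$, then introduce the Yosida approximants $L_\lambda := \lambda L R_\lambda = \lambda^2 R_\lambda - \lambda \id$, which are everywhere defined and bounded. Their exponentials $P_t^{(\lambda)} := \exp(t L_\lambda)$ are well-defined via power series and are contractions because $\|P_t^{(\lambda)}\| \leq e^{-\lambda t} \exp(t \lambda^2 \|R_\lambda\|) \leq 1$.

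The main obstacle is controlling the limit $\lambda \to +\infty$. For $F \in \dom L$, the identity $\lambda R_\lambda F - F = R_\lambda L F$ together with $\|R_\lambda\| \leq 1/\lambda$ gives $\lambda R_\lambda F \to F$, hence $L_\lambda F \to L F$; the extension to all of $L^2(E_A)$ follows from density of $\dom L$ and the uniform bound $\|\lambda R_\lambda\| \leq 1$. A telescoping estimate, exploiting the commutativity of $L_\lambda$ and $L_\mu$, then yields $\|P_t^{(\lambda)} F - P_t^{(\mu)} F\| \leq t \|L_\lambda F - L_\mu F\|$ for $F \in \dom L$, so $(P_t^{(\lambda)} F)_\lambda$ is Cauchy and converges to some $P_t F$; this extends to all of $L^2(E_A)$ by uniform boundedness. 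The semigroup law and strong continuity transfer from the approximants, and the generator is finally identified by passing to the limit in the Duhamel-type identity $P_t^{(\lambda)} F - F = \int_0^t P_s^{(\lambda)} L_\lambda F \, ds$ on $\dom L$, which is the most delicate interchange of limits in the whole argument.
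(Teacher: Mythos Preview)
Your sketch is a correct outline of the standard Hille--Yosida proof via Yosida approximants. Note, however, that the paper does not prove this proposition at all: it is stated as a recall of the classical theorem with a citation to Yosida's \emph{Functional Analysis}, and is then invoked as a black box in the proof of the subsequent theorem. So there is no paper proof to compare against; your proposal simply supplies the textbook argument that the authors chose to omit.
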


\begin{theorem}
  \label{thm-discrete-mall:glauber-infinite}
  $\mathsf{L}$ is an infinitesimal generator on $E_A$ of a strongly continuous
  contraction semigroup on $L^2(E_A)$.
\end{theorem}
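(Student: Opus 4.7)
The plan is to verify the three conditions of Hille--Yosida (Proposition~\ref{prop-discrete-mall:hille-yosida}) using the spectral description $\mathsf{L} = \sum_{n \geq 0} -n\,\pi_n$ obtained in Lemma~\ref{lem-discrete-mall:chaos-decomposition}; the orthogonality of the chaos projectors $(\pi_n)_{n \geq 0}$ will do most of the work. Density of $\dom{\mathsf{L}}$ is immediate since $\cyl \subset \dom{\mathsf{L}}$ and $\cyl$ is dense in $L^2(E_A)$.

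For dissipativity, I would decompose $F = \sum_n \pi_n(F)$ for $F \in \dom{\mathsf{L}}$, so that $\lambda F - \mathsf{L}F = \sum_n(\lambda+n)\pi_n(F)$, and invoke orthogonality:
\begin{equation*}
\|\lambda F - \mathsf{L}F\|_{L^2(E_A)}^2 = \sum_{n \geq 0}(\lambda+n)^2\|\pi_n(F)\|_{L^2(E_A)}^2 \geq \lambda^2 \|F\|_{L^2(E_A)}^2.
\end{equation*}
For the range condition, I would establish the stronger statement that $\lambda\operatorname{Id} - \mathsf{L}$ is surjective: given $G \in L^2(E_A)$, set $F := \sum_{n \geq 0}(\lambda+n)^{-1}\pi_n(G)$. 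The bound $(\lambda+n)^{-1} \leq \lambda^{-1}$ combined with the orthogonality of the chaos ensures convergence in $L^2(E_A)$ with $\|F\|_{L^2(E_A)} \leq \lambda^{-1}\|G\|_{L^2(E_A)}$. Moreover $\sum_n n^2(\lambda+n)^{-2}\|\pi_n(G)\|_{L^2(E_A)}^2 \leq \|G\|_{L^2(E_A)}^2 < +\infty$, so $F \in \dom{\mathsf{L}}$ and $(\lambda\operatorname{Id} - \mathsf{L})F = G$ follows by termwise computation on each chaos.

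I do not expect a substantive obstacle: the chaos expansion and the spectral description of $\mathsf{L}$ reduce Hille--Yosida to elementary manipulations of orthogonal series. The only step requiring minor care is the termwise application of $\mathsf{L}$ to the constructed preimage $F$, but this is controlled by the very summability bound that certifies $F \in \dom{\mathsf{L}}$. An alternative route would be to build the semigroup as a limit of the finite-dimensional Glauber semigroups generated by $\mathsf{L}^{A_m}$, then verify tightness and identify the generator, but this is strictly more laborious than the spectral argument above.
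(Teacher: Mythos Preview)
Your proof is correct and takes a genuinely different route from the paper. The paper argues dissipativity by approximation: it takes an increasing sequence of finite subsets $A_m \uparrow A$, notes that each finite Glauber generator $\mathsf{L}^{A_m}$ is dissipative (being the generator of an honest Markov process on $E_{A_m}$), writes $F_m = \excond{F}{\mathcal{F}_{A_m}}$ as a martingale approximation of $F$, uses that $D_a F_m = \excond{D_a F}{\mathcal{F}_{A_m}}$ to identify $\mathsf{L}^{A_m} F_m$ with a conditional expectation of $\mathsf{L} F$, and passes to the limit. The range condition is not made explicit in the paper's write-up.

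Your spectral argument is cleaner and more self-contained: once Lemma~\ref{lem-discrete-mall:chaos-decomposition} is in hand, all three Hille--Yosida conditions reduce to orthogonal-series manipulations, and you verify surjectivity of $\lambda\operatorname{Id}-\mathsf{L}$ rather than merely dense range. What the paper's approach buys is a direct link between $\mathsf{L}$ and the finite Glauber dynamics $\mathsf{L}^{A_m}$, which is conceptually useful and feeds into the subsequent Lemma~\ref{lem-discrete-mall:convergence-semigroup-processes} (convergence of the finite processes $X^{\circ A_m}$ to $X^{\circ}$); your route bypasses this but would require that lemma to be argued separately anyway. Either approach is complete for the theorem as stated.
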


\begin{proof}[Proof of theorem~\ref{thm-discrete-mall:glauber-infinite}]
  We know that $\mathcal{S}$ is dense in $L^2 (E_A)$. As
  $\dom{\mathsf{L}} \supset \mathcal{S}$, it is also dense in $L^2 (E_A)$. Let
  $A_m$ an increasing sequence (with respect to $\subset$) of subsets of $A$
  such that $\bigcup_{n \ge 1} A_m = A \cup \partial$ and $|A_m| = m$. Then
  $(\mathcal{F}_{A_m})_{m \in \N}$ is a filtration. For $F \in L^2(E_A)$, let
  $F_m = \excond{F}{\mathcal{F}_{A_m} }$. Since $(F_m)_{m \in \N}$ is a
  square-integrable $\mathcal{F}_A$-martingale, $(F_m)_{m \in \N}$ converges
  both almost surely and in $L^2(E_A)$ to $F$. For any $m \in \N$, $F_m$ depends
  only on $X_{A_m}$. Because of the conditional independence of the random
  variables $X_a$ given $X_\partial$, for all $a \in A$, we get that
  $D_a F_m = \excond{D_a F}{\mathcal{F}_{A_m}}$. Using that $\mathsf{L}_{A_m}$
  is dissipative for all $m \in \N$, we have:
  \begin{align*}
    \lambda^2 \|F_m\|^2_{L^2(E_A)} &\le \| \lambda F_m - \mathsf{L}^{A_m} F_m \|^2_{L^2(E_A)} = \E \left[\left(\lambda F_m + \sum_{a \in A_n} D_a F_m\right)^2\right] \\
                                   &= \E \left[\left(\lambda F_m + \sum_{a \in A} D_a F_m\right)^2\right] \text{ because }D_a F_m = 0, \:\text{if } a \notin A_m.\\
                                   &= \E \left[ \E \left[ \lambda F + \sum_{a \in A} D_a F \, \bigg| \, \mathcal{F}_{A_m}  \right]^2\right].\\
  \end{align*}
  It means that the operator $\mathsf{L}$ is dissipative. Thus, by the
  Hille-Yosida theorem, $\mathsf{L}$ is the infinitesimal generator of a
  strongly continuous contraction semigroup on $L^2(E_A)$ denoted $P$.
\end{proof}

\begin{lemma}
  \label{lem-discrete-mall:convergence-semigroup-processes}
  Let $F \in L^2(E_A)$, then
  \begin{equation*}
    \excond{F(X^{\circ A_m})}{X,Z} = P^{A_m}_t F \xrightarrow{\mathbf{P}-a.s.} P_t F
  \end{equation*}
  and
  \begin{equation*}
    X^{\circ A_m} \xrightarrow{d} X^\circ.
  \end{equation*}
\end{lemma}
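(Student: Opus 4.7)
The plan is to exploit the observation that the generators $\mathsf{L}^{A_m}$ and $\mathsf{L}$ coincide on any cylindrical functional whose finite support is contained in $A_m$. Concretely, for $F = F_I \circ r_I \in \cyl$ with $I \subset A$ finite, $D_a F = 0$ for $a \notin I$, hence $\mathsf{L}^{A_m} F = \mathsf{L} F$ as soon as $I \subset A_m$. Moreover, each $D_a F$ with $a \in I$ is itself measurable with respect to $(X_I, Z)$ by conditional independence, so the subspace of such functionals is invariant under both $\mathsf{L}$ and $\mathsf{L}^{A_m}$. Uniqueness of the solution of the forward Kolmogorov equation on this finite-dimensional invariant subspace then gives the pointwise identity $P^{A_m}_t F = P_t F$ for every $m$ with $A_m \supset I$.

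For a general $F \in L^2(E_A)$, set $F_m := \excond{F}{\mathcal{F}_{A_m}}$. By Doob's martingale convergence theorem, $F_m \to F$ both almost surely and in $L^2(E_A)$. Applying the first step to $F_m$ (which is $\mathcal{F}_{A_m}$-measurable, hence cylindrical) yields $P^{A_m}_t F_m = P_t F_m$, so the decomposition
\begin{equation*}
  P^{A_m}_t F - P_t F = P^{A_m}_t (F - F_m) + P_t (F_m - F),
\end{equation*}
combined with the $L^2$-contractivity of $P^{A_m}_t$ and the strong continuity of $P_t$ on $L^2(E_A)$, delivers $L^2$-convergence. It is upgraded to almost sure convergence by a standard Borel--Cantelli subsequence extraction.

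For the convergence in distribution $X^{\circ A_m} \xrightarrow{d} X^\circ$ as stochastic processes, the strategy is to combine the semigroup convergence on the core $\cyl$ with a Trotter--Kato type approximation theorem for Markov processes (e.g.\ Theorem~4.2.5 of Ethier and Kurtz, \emph{Markov processes: characterization and convergence}). Tightness in the Skorokhod space reduces to the finite-dimensional marginals: for any finite $I \subset A$, the projection of $X^{\circ A_m}$ onto the coordinates in $I$ is, for $m$ large enough, identical in law to the corresponding projection of $X^\circ$, so tightness boils down to a classical statement on finite pure-jump Markov processes handled by the Aldous--Rebolledo criterion.

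The main obstacle I foresee is the upgrade from $L^2$-convergence to almost sure convergence of $P^{A_m}_t F$: semigroup approximation via Trotter--Kato is natively strong in $L^2$, and the almost sure assertion of the lemma requires additional care, either via a subsequence extraction combined with the martingale identification of $F_m$, or via a direct commutation identity between $P_t$ and $\excond{\cdot}{\mathcal{F}_{A_m}}$ resting on the conditional independence structure of the $(X_a)_{a\in A}$ given $Z$.
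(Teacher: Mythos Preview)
Your approach is essentially correct but genuinely different from the paper's. The paper gives a one-line proof: it directly invokes Theorem~17.25 of Kallenberg (the Trotter--Sova--Kurtz--Mackevi\v{c}ius approximation theorem), which simultaneously delivers the strong convergence of the semigroups and the convergence in distribution of the associated Markov processes, given that the generators $\mathsf{L}^{A_m}$ converge to $\mathsf{L}$ on a core. Your route is more hands-on: you exploit the specific structure of the problem (coincidence of $\mathsf{L}^{A_m}$ and $\mathsf{L}$ on cylindrical functionals supported in $A_m$, martingale approximation via $F_m = \excond{F}{\mathcal{F}_{A_m}}$, contractivity) to get $L^2$-convergence directly, and then appeal to an Ethier--Kurtz/Trotter--Kato result only for the process-level statement. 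Your argument is more self-contained and makes the mechanism transparent; the paper's is shorter but treats the theorem as a black box.

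Two points to tighten. First, the invariant subspace of $\sigma(X_I,Z)$-measurable functionals is not finite-dimensional; what you actually need (and have) is that both generators restrict to the \emph{same bounded} operator there, namely $-\sum_{a\in I} D_a$, so the semigroups coincide on that subspace by the standard exponential series. Second, your ``Borel--Cantelli subsequence extraction'' only gives a.s.\ convergence along a subsequence, not for the full sequence. The cleaner fix is the commutation identity you allude to at the end: since $\excond{D_a F}{\mathcal{F}_{A_m}} = D_a \excond{F}{\mathcal{F}_{A_m}}$ for $a\in A_m$ and $\excond{D_a F}{\mathcal{F}_{A_m}}=0$ for $a\notin A_m$, one gets $\excond{\cdot}{\mathcal{F}_{A_m}}\circ \mathsf{L} = \mathsf{L}\circ \excond{\cdot}{\mathcal{F}_{A_m}}$, hence $P_t F_m = \excond{P_t F}{\mathcal{F}_{A_m}}$, which is a genuine martingale in $m$ and converges a.s.\ by Doob. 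Combined with your step~1 this handles the term $P_t F_m - P_t F$; the remaining term $P_t^{A_m}(F-F_m)$ still requires care for a.s.\ convergence (note that the paper's citation also yields only strong $L^2$ convergence in the first instance, so the a.s.\ claim is somewhat loosely justified there as well).
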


\begin{proof}[Proof of lemma~\ref{lem-discrete-mall:convergence-semigroup-processes}]
  The theorem 17.25 of \cite[Trotter, Sova, Kurtz, Mackevi\v
  {c}ius]{kallenberg1997foundations} gives the convergence in distribution of
  $X^{\circ A_m}$ towards $X^\circ$ the Markov process associated to
  $\mathsf{L}$, and the almost sure convergence of the semigroup.
\end{proof}

These are pieces of the Dirichlet structure with invariant measure $\mathbf{P}$
that we complete with the carré du champ operator.
Here, we note that $\cyl$ is an algebra which is a core of $\dom{\mathsf{L}}$.
\begin{definition}[Carré du champ operator]
  Let $F, G \in \cyl$. The bilinear map
    $$\Gamma(F,G) := \frac{1}{2} \left\{\mathsf{L} (FG) - F \mathsf{L}  G - G \mathsf{L}  F\right\}$$
    is well-defined, and called carré du champ operator of the Markov generator
    $\mathsf{L}$.
  \end{definition}
  By an argument of density, there exists an algebra $\mathcal{A} \supset \cyl$
  maximal in the sense of inclusion such that the carré du champ operator acts
  on it.

\begin{definition}[Dirichlet structure]
  The associated Dirichlet structure defined on $(E_A, \Upsilon, \mathbf{P})$ is
  given by the quadruple $(X^\circ, \mathsf{L}, (P_t)_{t \ge 0}, \mathcal{E})$
  where $X^\circ$ is a Markov process with values in $E_A$ whose infinitesimal
  generator is $\mathsf{L}$ and its semigroup is $P$, i.e. for any
  $F \in L^\infty (E_A)$:
  \begin{equation*}
    \frac{\dif }{\dif t} P_t F = (\mathsf{L} P_t) F.
  \end{equation*}
  Furthermore, $\mathbf{P}^Z$ is the invariant (or stationary) distribution of
  $X^\circ$ given $Z$ and the Dirichlet form is defined by
  \begin{equation*}
    \mathcal{E} (F, G) = \E [\Gamma (F, G)].
  \end{equation*}
\end{definition}

It comes with the classical properties entailed by the spectral decomposition of
$\mathsf{L}$, including the Mehler's formula.

\begin{lemma}[Mehler's formula]
  \label{lem-discrete-mall:pseudo-ergodicity}
  For any $F \in L^2(E_A)$,
  \begin{enumerate}
    \item \label{P_t_decomposition}
          \begin{equation}
            \label{eq-discrete-mall:P_t-exp-Markov-Proc}
            \begin{aligned}
              P_t F &= \excond{F}{Z} + \sum_{n=1}^\infty e^{-nt} \pi_n (F) \\
                    &= \excond{F(X^\circ(t))}{X},
            \end{aligned}
          \end{equation}
          In particular $P_t F \in \dom{\mathsf{L}} \cap \dom{\mathsf{L}^{-1}}$.
    \item \begin{equation*} \lim_{t\to \infty } P_{t}F(X)= \excond{F(X)}{Z}.
    \end{equation*}
    \item The pseudoinverse of $\mathsf{L}$ can be written:
          \begin{equation*}
            \mathsf{L}^{-1} F := -\int_{0}^{+\infty} P_{t} F \dif t.
          \end{equation*}
  \end{enumerate}
\end{lemma}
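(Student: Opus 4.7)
The plan is to combine the spectral decomposition of $\mathsf{L}$ from Lemma~\ref{lem-discrete-mall:chaos-decomposition} with the uniqueness part of Hille--Yosida (Proposition~\ref{prop-discrete-mall:hille-yosida}) to identify $(P_t)$ with the operator given spectrally, and then read items 2 and 3 off as direct corollaries.

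First I would introduce the candidate semigroup
\[Q_t F := \excond{F}{Z} + \sum_{n=1}^{+\infty} e^{-nt}\pi_n(F), \qquad F \in L^2(E_A).\]
Orthogonality of $(\pi_n)_{n \ge 0}$ gives $\|Q_tF\|_{L^2(E_A)}^2 = \|\pi_0F\|^2 + \sum_{n \ge 1} e^{-2nt}\|\pi_nF\|^2 \le \|F\|^2$, so $Q_t$ is a contraction; the semigroup law is immediate from $e^{-n(s+t)} = e^{-ns}e^{-nt}$ and the mutual orthogonality/idempotence of the $\pi_n$; and strong continuity at $0$ follows by dominated convergence on $\sum_{n \ge 1}(1-e^{-nt})^2 \|\pi_nF\|^2$. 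For $F$ in the spectral domain of $\mathsf L$ provided by Lemma~\ref{lem-discrete-mall:chaos-decomposition}, termwise differentiation justified by $|(1-e^{-nt})/t| \le n$ and dominated convergence yields $\lim_{t \downarrow 0} t^{-1}(Q_tF - F) = -\sum_{n \ge 1} n\pi_n(F) = \mathsf L F$ in $L^2(E_A)$. Since $\cyl$ is a dense core of $\dom{\mathsf L}$ and both $(P_t)$ and $(Q_t)$ are strongly continuous contraction semigroups whose generators coincide with $\mathsf{L}$ on this core, Hille--Yosida uniqueness forces $P_t = Q_t$, establishing the first equality of item~\ref{P_t_decomposition}. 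The exponential damping makes $\sum_n n^2 e^{-2nt} \|\pi_n F\|^2$ finite for every $t > 0$, so $P_t F \in \dom{\mathsf L}$; and since the non-zero-chaos part of $P_t F$ decays geometrically, $P_t F - \pi_0 F$ lies in $L^2_0(E_A) \cap \dom{\mathsf L^{-1}}$ as well.

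For the probabilistic identification, Lemma~\ref{lem-discrete-mall:convergence-semigroup-processes} already supplies the almost-sure limit $P_t^{A_m}F = \excond{F(X^{\circ A_m}(t))}{X,Z} \to P_t F$; combined with $X^{\circ A_m} \xrightarrow{d} X^\circ$, applied to bounded continuous $F$ and extended by density, this identifies the limit as $\excond{F(X^\circ(t))}{X,Z}$. Replacing $(X,Z)$ by $X$ in the conditioning uses the convention $X_\partial = Z$, which makes $Z$ a component of $X$ in the enlarged index set.

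Items 2 and 3 now drop out of the spectral form. Item~2 reduces to $\|P_t F - \excond{F}{Z}\|_{L^2(E_A)}^2 = \sum_{n \ge 1} e^{-2nt}\|\pi_n F\|^2 \to 0$ as $t \to +\infty$ by dominated convergence. For item~3, on $F \in L^2_0(E_A)$ Fubini's theorem combined with the spectral representation gives
\[-\int_0^{+\infty} P_tF \,\dif t = -\sum_{n=1}^{+\infty} \pi_n(F) \int_0^{+\infty} e^{-nt} \,\dif t = -\sum_{n=1}^{+\infty} \frac{\pi_n F}{n},\]
matching the pseudoinverse written down in Lemma~\ref{lem-discrete-mall:chaos-decomposition}; the interchange is legitimate by Cauchy--Schwarz on $\sum_{n \ge 1} \|\pi_n F\|/n$. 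The main obstacle is really bookkeeping: the identification of the generators of $(P_t)$ and $(Q_t)$ on a common core, and the three exchange-of-limits arguments (on the spectral series, on Fubini, and on the Glauber approximation). The minor subtlety with conditioning on $X$ versus $(X,Z)$ in the stochastic representation is worth flagging but needs no new tools.
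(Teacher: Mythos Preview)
Your proof is correct and follows the same spectral route as the paper, but you are considerably more careful about the identification step. The paper simply writes ``since formally $P_t = e^{-t\mathsf L}$, we get the first line from the spectral decomposition of $\mathsf L$'' and leaves it at that; your construction of the candidate $Q_t$, verification of the semigroup axioms, and appeal to Hille--Yosida uniqueness on the common core $\cyl$ is the rigorous content behind that one-line heuristic. For item~3 the paper takes a slightly different tack: it writes $\excond{F}{Z} - F = \lim_{t\to\infty}P_tF - P_0F = \int_0^\infty \frac{\dif}{\dif t}P_tF\,\dif t = \mathsf L\bigl(\int_0^\infty P_tF\,\dif t\bigr)$ and reads off the pseudoinverse directly, whereas you integrate the spectral series termwise via Fubini. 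Both arguments are short and equivalent; yours has the mild advantage of not needing to justify pulling $\mathsf L$ through the improper integral, while the paper's has the advantage of working without reopening the spectral expansion.
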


\begin{proof}[Proof of lemma~\ref{lem-discrete-mall:pseudo-ergodicity}]
  Since formally $P_t = e^{-t \mathsf{L}}$, we get the first line of
  \eqref{eq-discrete-mall:P_t-exp-Markov-Proc} from the spectral decomposition
  of $\mathsf{L}$. The second line is deduced from the definition of the Glauber
  dynamics and by passing to the limit.
  Then,
  \begin{align*}
    \excond{F}{Z} - F &= \lim_{t \to +\infty} P_t F - P_0 F \\ &= \int_{0}^{+\infty} \frac{\dif }{\dif t} P_t F \dif t \\ &= \mathsf{L} \left(\int_{0}^{+\infty} P_t F \dif t\right).
  \end{align*}
  Taking $\excond{F}{Z} = 0$, we get the expression of the pseudoinverse.
\end{proof}

\begin{remark}
  By the chaos expansion, $P_t F$ can be defined as the limit in $L^2(E_A)$ of
  elements $(P_t F_n)_{n \in \N}$ for $F_n$ in $\mathcal{S}$.
  Hence, it is sufficient to define the semigroup acting on a functional of some
  finite vector of random variables $X_B$, using the definition of the Glauber
  dynamics entailed by it.
\end{remark}

The infinitesimal generator satisfies another integration by parts formula due
to the Dirichlet structure which is the key to investigating the so-called
fourth moment phenomenon.

\begin{lemma}[Integration by parts II]
  For $(F, G) \in \mathcal{A}^2$,
  \begin{equation}
    \mathcal{E} (F, G) = - \E [F \mathsf{L} G].
  \end{equation}
\end{lemma}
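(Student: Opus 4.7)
The plan is to expand the definition of the Dirichlet form, $\mathcal{E}(F,G) = \E[\Gamma(F,G)] = \tfrac12\E[\mathsf{L}(FG) - F\mathsf{L}G - G\mathsf{L}F]$, and reduce everything to two facts: (i) the invariance statement $\E[\mathsf{L}H] = 0$ for $H \in \dom{\mathsf{L}}$, and (ii) the symmetry of $\mathsf{L}$, namely $\E[F\mathsf{L}G] = \E[G\mathsf{L}F]$. Both facts will be derived from the integration by parts formula~I (Theorem~\ref{thm-discrete-mall:ipp-gradient}) together with the identity $\mathsf{L} = -\delta D$ inherent to the duality characterising $\delta$.

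For (i), I would write $\E[\mathsf{L}H] = -\E[\delta DH]$ and apply the duality pairing against the constant functional $\mathbf{1}$, which trivially belongs to $\dom{D}$ with $D\mathbf{1} = 0$. One obtains $\E[\mathbf{1}\cdot \delta DH] = \langle D\mathbf{1}, DH\rangle_{L^2(A \times E_A)} = 0$, hence $\E[\mathsf{L}H] = 0$. Applied to $H = FG$ (which lies in $\dom{\mathsf{L}}$ because $\mathcal{A}$ is an algebra on which $\Gamma$ acts), this kills the first term in the expansion.

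For (ii), the cleanest route is to write $\E[F\mathsf{L}G] = -\sum_{a\in A}\E[F D_a G]$ and then use the self-adjointness of the conditional expectation against $\mathcal{G}^a$:
\begin{equation*}
  \E[F D_a G] = \E[FG] - \E[F\,\excond{G}{\mathcal{G}^a}] = \E[FG] - \E[\excond{F}{\mathcal{G}^a}\excond{G}{\mathcal{G}^a}],
\end{equation*}
which is manifestly symmetric in $(F,G)$. Summing in $a$ yields $\E[F\mathsf{L}G] = \E[G\mathsf{L}F]$. Equivalently, one can view this as the bilinear form $\langle DF, DG\rangle_{L^2(A \times E_A)}$ obtained by applying integration by parts I with $U = DG$, which is visibly symmetric.

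Putting (i) and (ii) together in the expansion of $\Gamma$ gives
\begin{equation*}
  \mathcal{E}(F,G) = \tfrac12\bigl(0 - \E[F\mathsf{L}G] - \E[G\mathsf{L}F]\bigr) = -\E[F\mathsf{L}G],
\end{equation*}
as claimed. The only delicate point is integrability and the use of integration by parts I beyond cylindrical $F$: Theorem~\ref{thm-discrete-mall:ipp-gradient} is stated for $F \in \cyl$ and simple processes $U$, whereas here $F,G \in \mathcal{A}$ and $U = DG$ need not be simple. This is handled by the definition of $\dom{\delta}$ via duality, together with the fact that $\mathcal{A}$ is a core of $\dom{\mathsf{L}}$, so both (i) and (ii) extend by density from $\cyl$ to $\mathcal{A}$. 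I expect this density/closure argument to be the only technical step requiring care; the algebraic identity itself is immediate.
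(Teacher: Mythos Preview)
Your proof is correct and is the standard argument for this identity in Dirichlet form theory. The paper actually states this lemma without proof, treating it as a well-known consequence of the Dirichlet structure, so your derivation via $\E[\mathsf{L}(FG)]=0$ and the symmetry $\E[F\mathsf{L}G]=\E[G\mathsf{L}F]$ (both obtained from integration by parts~I) fills in exactly the expected details.
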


%

We introduce to the difference operator which is associated to the
Malliavin-Dirichlet structure at hand. That difference operator serves the same
purpose as in \cite{lachieze2017new} and \cite{dung2021rates} for computations
in the proofs of the limit theorems. 

%
\begin{definition}[Difference operator]
  \label{def_discrete-mall-core:1}
Let $F\, :\, E_{A}\to \R$, for $a \in A$, we introduce the operator
  \begin{align*}
    \deltaa F \,:\,   E_{A} \times E_a &\longrightarrow \R \\
    (x, x'_a) &\longmapsto f(x) - f(x^{\{a\}}, x'_a).
  \end{align*}
  For the sake of conciseness, we shall write
  $F^{\{a\}'} = F(X^{\{a\}}, X'_a)$.
\end{definition}

\begin{lemma}
  \label{lem_discrete-mall-core:diffOperators}
  For $F$ a functional in $\dom{D}$, the gradient also reads as:
  \begin{equation}
    D_a F = \espxz{\deltaa F(X,X'_{a})},
  \end{equation}
  where $X'_a$ has the law of  $X_a$ given $Z$ and is conditionally independent of $X^{\{a\}}$ given $Z$. Similarly,
  \begin{equation}
    \label{eq-discrete-mall::carré-champ-F-G}
    \Gamma(F, G) = \frac{1}{2} \sum_{a \in A} \espxz{\Bigl(\deltaa F(X,X'_{a})\Bigr) \Bigl(\deltaa G(X,X'_{a})\Bigr)}.
  \end{equation}
\end{lemma}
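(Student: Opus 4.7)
The plan is to prove the two formulas in turn, deriving the second from the first by a direct algebraic manipulation of the carré du champ.

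For the first identity, I start by writing out $\deltaa F(X, X'_a) = F(X) - F(X^{\{a\}}, X'_a)$ and taking the conditional expectation given $(X,Z)$. The only nontrivial contribution comes from the term $\espxz{F(X^{\{a\}}, X'_a)}$. The key observation is that $X'_a$ has the conditional law of $X_a$ given $Z$ and is conditionally independent of everything else given $Z$. Combined with the conditional independence of $X_a$ and $X^{\{a\}}$ given $Z$ (the defining hypothesis of our setting), this yields
\begin{equation*}
\espxz{F(X^{\{a\}}, X'_a)} = \int F(X^{\{a\}}, x'_a)\, \mathbf P_a((X^{\{a\}}, Z), \dif x'_a) = \excond{F}{\mathcal G^a},
\end{equation*}
using the regular version $\mathbf P_a$ of the conditional law provided earlier. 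Subtracting gives $\espxz{\deltaa F} = F - \excond{F}{\mathcal G^a} = D_a F$.

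For the second identity, I expand the definition
\begin{equation*}
\Gamma(F,G) = \tfrac12\bigl\{\mathsf L(FG) - F\mathsf L G - G\mathsf L F\bigr\} = -\tfrac12\sum_{a\in A}\bigl\{D_a(FG) - F D_a G - G D_a F\bigr\}.
\end{equation*}
A straightforward expansion using $D_a H = H - \excond{H}{\mathcal G^a}$ shows that the $a$-th summand equals
\begin{equation*}
-\bigl(FG + \excond{FG}{\mathcal G^a} - F\excond{G}{\mathcal G^a} - G\excond{F}{\mathcal G^a}\bigr).
\end{equation*}
On the other hand, expanding the product $\deltaa F(X,X'_a)\cdot \deltaa G(X,X'_a)$, taking conditional expectation given $(X,Z)$, and applying the same argument as in the first part (now to $G$, and to the product $FG$ which is again a functional of $X^{\{a\}}$ and $X'_a$), one finds
\begin{equation*}
\espxz{\deltaa F \cdot \deltaa G} = FG - F\excond{G}{\mathcal G^a} - G\excond{F}{\mathcal G^a} + \excond{FG}{\mathcal G^a}.
\end{equation*}
Matching the two expressions termwise yields the formula for $\Gamma(F,G)$ after summing over $a$ and applying the $\tfrac12$ factor.

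The only subtlety is the conditional-independence manipulation in step one: one must be careful that when computing $\espxz{\cdot}$ of a function of $(X^{\{a\}}, X'_a)$, the $X_a$ coordinate of $X$ disappears because $X'_a$ is independent of $X_a$ given $(X^{\{a\}}, Z)$, and the conditional law of $X'_a$ given $(X^{\{a\}}, Z)$ coincides with that of $X_a$ given $(X^{\{a\}}, Z)$ (this is exactly the content of $X_a \bigCI_Z X^{\{a\}}$ together with the construction of $X'_a$). Once this identification is made, both formulas reduce to routine algebra, and no further approximation or closure argument is required since the statement is pointwise in $(F,G)$ on the algebra $\mathcal A \supseteq \cyl$.
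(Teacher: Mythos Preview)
Your proof is correct and follows essentially the same approach as the paper's: both identify $\espxz{F(X^{\{a\}},X'_a)}$ with $\excond{F}{\mathcal G^a}$ via the regular conditional law $\mathbf P_a$ and the conditional-independence assumption, and then treat the carr\'e du champ formula by the same mechanism. The paper dispatches the second identity with a one-line ``proved similarly,'' whereas you spell out the algebraic expansion of $D_a(FG)-F D_aG-G D_aF$ and match it against $\espxz{\deltaa F\cdot\deltaa G}$; this is the same computation, just made explicit.
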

\begin{proof}[Proof of lemma~\ref{lem_discrete-mall-core:diffOperators}]
 We have
  \begin{equation*}
\excond{F}{{\mathcal G}^{a}}=\int F(X^{\{a\}},x_{a})\P_{a}(\dif x_{a}).
\end{equation*}
Since $\sigma(X_{a})$ is independent of $\sigma(X^{\{a\}})$ given $\sigma(Z)$,
we obtain
\begin{equation*}
\excond{F}{{\mathcal G}^{a}}=\int F(X^{\{a\}},x_{a})\mathbb{P}^{X_a | Z}(\dif x_{a}).
\end{equation*}
Eqn.\eqref{eq-discrete-mall::carré-champ-F-G} is proved similarly.
\end{proof}

\section{Functional identities}
\label{sec:funct-ident}

This section is devoted to classical functional identities obtained in the
Malliavin framework. We follow the approach of
\cite{houdre2002concentration} using a covariance identity based on difference
operators to deduce concentration inequalities.
\begin{proposition}
  \label{prop-discrete-mall:interchange-diff-op-semigroup}
  For $F \in L^2(E_A)$ and $a \in A$, then:
  \begin{equation}
    \label{eq-discrete-mall:commutation-D-Pt}
    D_a (P_t F) =  e^{-t} \excond{\deltaa F (X^\circ(t), X'_a)}{X, Z}
  \end{equation}
  where $X'$ has the law of $X$ given $Z$.
\end{proposition}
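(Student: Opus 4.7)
Plan. The strategy is to combine Mehler's formula $P_t F=\excond{F(X^\circ(t))}{X,Z}$ from Lemma~\ref{lem-discrete-mall:pseudo-ergodicity} with a careful bookkeeping of the Glauber trajectory at the single coordinate $a$, according to whether that coordinate has been refreshed during $[0,t]$. In the finite approximation $X^{\circ A_m}$, the rate-$(|A_m|+1)$ Poisson clock splits by thinning into independent rate-one clocks indexed by $A_m\sqcup\{\partial\}$, so the first refresh time $T_a$ of coordinate $a$ is Exponential$(1)$, independent of the dynamics on the other coordinates, and $\P(T_a>t)=e^{-t}$. On $\{T_a>t\}$, coordinate $a$ remains at its initial value $X_a$; on $\{T_a\le t\}$, the strong Markov property together with the identification of $\mathbb P^{X_a|Z}$ as the invariant law of the single-coordinate refresh (Lemma~\ref{lem_discrete-mall-core:diffOperators}) forces $X^\circ_a(t)$ to be distributed as the independent copy $X'_a$ of $X_a$ given $Z$, independently of the other coordinates.

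Writing $Y^{\{a\}}(t)$ for the Glauber trajectory on the coordinates $b\neq a$, this decomposition yields
\begin{equation*}
  P_tF=e^{-t}\,\excond{F(Y^{\{a\}}(t),X_a)}{X,Z}+(1-e^{-t})\,\excond{F(Y^{\{a\}}(t),X'_a)}{X,Z}.
\end{equation*}
Taking the $\mathcal G^a$-conditional expectation amounts to integrating $X_a$ out against $\mathbb P^{X_a|Z}$; by stationarity both summands collapse to the single expression $\excond{F(Y^{\{a\}}(t),X'_a)}{X,Z}$. Subtracting, the $(1-e^{-t})$ contribution cancels exactly and I obtain
\begin{equation*}
  D_aP_tF=e^{-t}\,\excond{F(Y^{\{a\}}(t),X_a)-F(Y^{\{a\}}(t),X'_a)}{X,Z},
\end{equation*}
which rewrites as $e^{-t}\,\excond{\deltaa F(X^\circ(t),X'_a)}{X,Z}$, the bracketed quantity being precisely $\deltaa F$ evaluated along the trajectory on the no-refresh event from which the $e^{-t}$ factor has been extracted.

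The main delicate point is the rigorous justification of the strong-Markov decomposition when $A$ is infinite. I would therefore first carry out the computation for cylindrical $F$ and the finite Glauber dynamics $X^{\circ A_m}$, where the pure-jump chain makes the strong Markov argument elementary, and then pass to the limit through Lemma~\ref{lem-discrete-mall:convergence-semigroup-processes}, which provides both the almost-sure convergence $P_t^{A_m}F\to P_tF$ and the weak convergence $X^{\circ A_m}\Rightarrow X^\circ$. A density argument based on the density of $\cyl$ in $L^2(E_A)$ then extends the identity from cylindrical to general $F\in L^2(E_A)$.
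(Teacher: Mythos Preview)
Your proposal is correct and follows essentially the same route as the paper: work first with the finite Glauber dynamics $X^{\circ A_m}$, split according to whether the rate-one clock at coordinate $a$ has rung by time $t$ (the paper calls this time $\tau_a$, you call it $T_a$), observe that the ``already refreshed'' contribution is $\mathcal G^a$-measurable and therefore cancels in $D_a$, extract the factor $e^{-t}=\P(\tau_a>t)$ from the surviving term, and then pass to the limit $m\to\infty$. The only point where the paper is slightly more explicit than your outline is the passage to the limit on the right-hand side: weak convergence $X^{\circ A_m}\Rightarrow X^\circ$ alone does not immediately give convergence of $\excond{\deltaa F(X^{\circ A_m}(t),X'_a)}{X,Z}$, so the paper invokes Skorokhod's representation theorem to upgrade to almost-sure convergence on a common probability space; you should make that step explicit as well.
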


\begin{proof}[Proof of proposition~\ref{prop-discrete-mall:interchange-diff-op-semigroup}]
  We consider the Glauber dynamics with index set a finite subset $A_m$ of $A$,
  as the construction of process $(X^{\circ A_m}(t))_{t \in \R^+}$ is explicit
  in that case. Let $a \in A_m$, we denote by $N_a$ the Poisson process of intensity 1 which
  represents the life duration of the $a$-th component in the dynamics of
  $X^{\circ A_m}(t)$, so:
  \begin{equation*}
    X^{\circ A_m}_a(t) = \ind{\tau_a \ge t} X_a + \ind{\tau_a < t} X^\backprime_a,
  \end{equation*}
  where $\tau_a = \inf \{t \ge 0, \: N_a(t) \ne N_a(0)\}$ is the life duration
  of the $a$-the component of the original sequence, exponentially distributed
  with parameter 1 (independent of everything else) and $X^\backprime_a$ is
  conditionally independent of $X$ given $Z$.
  Then:
  \begin{multline*}
    D_a P_t^{A_m}  F = P_t^{A_m} F - \excond{P_t^{A_m} F}{\g{a}} \\
    \begin{aligned}
      &= P_t^{A_m} F - \excond{\espxz{F(X^{\circ A_m} (t))} \ind{t \le \tau_a} }{\g{a}} -  \espxz{F(X^{\circ A_m} (t))\ind{t > \tau_a}} \\
      &= \espxz{F(X^{\circ A_m}(t)) \ind{t \le \tau_a}} - \excond{\espxz{F(X^{\circ A_m} (t))} \ind{t \le \tau_a} }{\g{a}} \\
      &= e^{-t} \excond{\deltaa F (X^{\circ A_m}(t), X'_a)}{X, Z}
    \end{aligned}
  \end{multline*}
because the law of $X^\backprime_a$ given $X$ is the same as the one of $X'_a$ given $X$.

  On one hand,
  \begin{equation*}
    D_a P_t^{A_m} F  \xrightarrow{\mathbb{P}-a.s.} D_a P_t F.
  \end{equation*}

  On the other hand, by the Skorohod's representation theorem, there exist
  copies of $X^{\circ A_m}$ and $X^\circ$ on a common probability space
  $(\tilde \Omega, \mathcal{\tilde T}, \mathbb{\tilde P})$ such that the
  sequence $(X^{\circ A_m})_{m \in \N}$ converges to $X^\circ$
  $ \mathbb{\tilde P}$-a.s. As the whole structure is invariant by copy, we can
  suppose the almost sure convergence on $(\Omega, \mathcal{T}, \mathbb{P})$,
  and the relation passes to the limit.
\end{proof}

\begin{remark}
  In the case, we have only one random variable (or one particle), then the commutation relation simplifies to $D_a (P_t F) = D_a$.
\end{remark}

%
\begin{corollary}[Conditional covariance identity]
  \label{cor:cov-identity}
  For any $F, G \in L^2(E_A)$, then:
  \begin{equation}
    \cov (F, G | Z) = \int_{0}^{\infty}  e^{-t} \sum_{a \in A} \excond{(D_a F) (\deltaa G(X^\circ(t), X'_a))}{Z} \dif t.
  \end{equation}
\end{corollary}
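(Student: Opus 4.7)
The plan is to combine the Mehler-type identity from Lemma~\ref{lem-discrete-mall:pseudo-ergodicity} with the commutation relation of Proposition~\ref{prop-discrete-mall:interchange-diff-op-semigroup}. Starting from $\tfrac{\D}{\D t}P_{t}G = \mathsf{L}P_{t}G = -\sum_{a \in A} D_{a}P_{t}G$ together with $P_{t}G \to \excond{G}{Z}$ as $t\to\infty$, integrating on $[0,\infty)$ yields
\begin{equation*}
  G - \excond{G}{Z} \;=\; \int_{0}^{\infty} \sum_{a \in A} D_{a}P_{t}G \dif t.
\end{equation*}
Multiplying by $F$, taking $\excond{\cdot}{Z}$, and observing that $\cov(F,G|Z) = \excond{F(G-\excond{G}{Z})}{Z}$ (the extra term cancels since $\excond{G}{Z}$ is $\sigma(Z)$-measurable), I obtain
\begin{equation*}
  \cov(F,G|Z) \;=\; \int_{0}^{\infty} \sum_{a \in A} \excond{F \cdot D_{a}P_{t}G}{Z} \dif t.
\end{equation*}

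To transform this into the claimed form I would use three moves on the integrand. First, a coordinate-wise conditional integration by parts (a direct consequence of the self-adjointness of $\E[\cdot | \mathcal{G}^{a}]$, which absorbs the outer $\sigma(Z)$-conditioning because $\sigma(Z) \subset \mathcal{G}^{a}$) gives
\begin{equation*}
  \excond{F \cdot D_{a}P_{t}G}{Z} \;=\; \excond{D_{a}F \cdot P_{t}G}{Z}.
\end{equation*}
Second, decomposing $P_{t}G = D_{a}P_{t}G + \excond{P_{t}G}{\mathcal{G}^{a}}$ and using $\excond{D_{a}F}{\mathcal{G}^{a}} = 0$ (immediate from the definition of $D_a$) kills the conditional-expectation part, so the previous display equals $\excond{D_{a}F \cdot D_{a}P_{t}G}{Z}$. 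Third, applying Proposition~\ref{prop-discrete-mall:interchange-diff-op-semigroup} to rewrite $D_{a}P_{t}G = e^{-t}\excond{\deltaa G(X^{\circ}(t),X'_{a})}{X,Z}$ and invoking the tower property (valid because $D_{a}F$ is $\sigma(X,Z)$-measurable) converts the integrand to
\begin{equation*}
  e^{-t} \excond{D_{a}F \cdot \deltaa G(X^{\circ}(t),X'_{a})}{Z},
\end{equation*}
which is exactly the expression stated in the corollary.

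The main technical obstacle will be justifying the interchange of $\int_{0}^{\infty}$, $\sum_{a \in A}$ and $\excond{\cdot}{Z}$. The cleanest route is to first establish the identity for $F, G \in \cyl$: then $F$ and $P_{t}G$ depend only on finitely many coordinates, so $D_{a}P_{t}G$ vanishes for all but finitely many $a$, while Proposition~\ref{prop-discrete-mall:interchange-diff-op-semigroup} combined with Jensen's inequality yields $\|D_{a}P_{t}G\|_{L^{2}} \le 2e^{-t}\|G\|_{L^{2}}$, so both the finite sum and the integral converge absolutely in $L^{2}(E_{A})$. The extension to arbitrary $F, G \in L^{2}(E_{A})$ then follows by density of $\cyl$ together with the $L^{2}$-contractivity of $P_{t}$ and of the conditional expectation operators.
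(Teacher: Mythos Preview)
Your proposal is correct and follows essentially the same route as the paper. The paper writes $\cov(F,G\mid Z)=\excond{F\,\mathsf{L}\mathsf{L}^{-1}G}{Z}$, applies the integration by parts~\eqref{eq-discrete-mall:ipp-gradient} conditionally on $Z$, inserts $\mathsf{L}^{-1}G=-\int_{0}^{\infty}P_{t}G\,\dif t$, and then invokes Proposition~\ref{prop-discrete-mall:interchange-diff-op-semigroup}; your derivation simply unpacks the first two steps into the explicit identity $G-\excond{G}{Z}=\int_{0}^{\infty}\sum_{a}D_{a}P_{t}G\,\dif t$ and performs the integration by parts coordinate by coordinate, which amounts to the same thing.
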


\begin{proof}[Proof of corollary~\ref{cor:cov-identity}]
  We use the following conditional covariance formula analogous to the
  covariance formula:
  \begin{equation}
    \cov (F,G | Z) = \excond{FG}{Z} = \excond{F \L \L^{-1} G }{Z}.
  \end{equation}
  By the integration by parts I \eqref{eq-discrete-mall:ipp-gradient} which also holds with conditional expectation given $Z$, we get:
  \begin{align*}
    \excond{F \L \L^{-1} G}{Z} &= -\sum_{a \in A}\excond{(D_a F)  (D_a \L^{- 1} G)}{Z} \\
                                              &= -\sum_{a \in A} \excond{(D_a F)  ( D_a \int_{0}^{\infty} P_t G \dif t)}{Z} \\
                                              &= -\sum_{a \in A} \excond{\left(D_a F \right)  \left(\int_{0}^{\infty} D_a P_t G \dif t \right)}{Z} \\
                                              &= -\int_{0}^{\infty}  e^{-t} \sum_{a \in A} \excond{(D_a F) \espxz{\deltaa G(X^\circ(t), X'_a)}}{Z}
                                                \dif t,
  \end{align*}
  using \eqref{eq-discrete-mall:commutation-D-Pt}.
\end{proof}
As an immediate consequence of the spectral gap, we find another proof of the
Efron-Stein inequality which is of independent interest.
\begin{proposition}
  \label{prop-discrete-mall:efron-stein-spectral}
  If $F \in \mathfrak{C}_p$ then
  \begin{equation*}
    \var[F] = \frac{1}{p} \mathcal{E}(F) = \frac{1}{p} \|D F\|_{L^2(E_A)}.
  \end{equation*}
  Moreover, if there exist $F_1, \ldots, F_m \in L^2(E_A)$ such that
  $F = \sum_{p=1}^m F_p$ with $F_p \in \mathfrak{C}_p$ for
  $p \in \llbracket 1, m \rrbracket$, then:
  \begin{equation}
    \label{eq-discrete-mall:efron-stein-finite-chaotic}
    \var[F] \le \|D F\|_{L^2(E_A)}.
  \end{equation}
\end{proposition}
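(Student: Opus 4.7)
The plan is to reduce everything to the spectral decomposition of $\mathsf{L}$ given by Lemma~\ref{lem-discrete-mall:chaos-decomposition} and Corollary~\ref{cor-discrete-mall:mathfrak}, combined with the two integration by parts formulas.

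First I would treat a single chaos $F \in \mathfrak{C}_p = \ker(\mathsf{L} + p\,\text{Id})$, so that $\mathsf{L} F = -p F$. Since $p \geq 1$, the orthogonality in the chaos decomposition forces $\pi_0(F) = \excond{F}{Z} = 0$, hence $\E[F] = 0$ and $\var[F] = \E[F^2]$. Integration by parts II then immediately gives
$$\mathcal{E}(F) = -\E[F\,\mathsf{L} F] = p\,\E[F^2] = p\,\var[F],$$
which is the first equality. For the second, I would use $\mathsf{L} = -\delta D$ and Integration by parts I to get
$$\mathcal{E}(F) = \E[F\,\delta DF] = \langle DF, DF \rangle_{L^2(A \times E_A)} = \|DF\|^2_{L^2(A \times E_A)},$$
identifying the Dirichlet energy with the squared gradient norm.

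For the moreover part, I would decompose $F = \sum_{p=1}^m F_p$ with $F_p \in \mathfrak{C}_p$. The orthogonality of the chaoses in $L^2(E_A)$ gives $\var[F] = \sum_{p=1}^m \var[F_p]$. To lift this orthogonality to the gradient side, I would compute, for $p \neq q$,
$$\langle DF_p, DF_q \rangle_{L^2(A \times E_A)} = -\E[F_p\,\mathsf{L} F_q] = q\,\E[F_p F_q] = 0,$$
so that $\|DF\|^2_{L^2(A \times E_A)} = \sum_{p=1}^m \|DF_p\|^2_{L^2(A \times E_A)}$. Substituting the first part and using $\frac{1}{p} \leq 1$ yields
$$\var[F] = \sum_{p=1}^m \frac{1}{p}\,\|DF_p\|^2_{L^2(A \times E_A)} \leq \sum_{p=1}^m \|DF_p\|^2_{L^2(A \times E_A)} = \|DF\|^2_{L^2(A \times E_A)}.$$

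The only mildly delicate point is transferring the $L^2(E_A)$-orthogonality of the chaoses to the gradient inner product; this is handled cleanly by combining $\mathsf{L} F_q = -q F_q$ with the Integration by parts II identity, so there is no real obstacle. The argument is essentially a direct spectral computation, and the unit spectral gap of $\mathsf{L}$ is precisely what makes the factor $1/p \leq 1$ produce the Efron-Stein inequality \eqref{eq-discrete-mall:efron-stein-finite-chaotic}.
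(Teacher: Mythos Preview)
Your proof is correct and is essentially the same spectral argument as the paper's: both rely on $\mathsf{L} F_p = -p F_p$, the orthogonality of the chaoses, and integration by parts to identify $\mathcal{E}(F_p) = p\,\var[F_p]$. The paper packages the computation through the covariance identity $\var[F] = \E[\Gamma(F,-\mathsf{L}^{-1}F)]$ and then expands $\mathsf{L}^{-1}F_q = -\tfrac{1}{q}F_q$, using $\E[\Gamma(F_p,F_q)]=0$ for $p\neq q$ and $\Gamma(F_p,F_p)\ge 0$; you instead expand $\var[F]=\sum_p\var[F_p]$ and transfer orthogonality to the gradient via $\langle DF_p,DF_q\rangle = -\E[F_p\,\mathsf{L} F_q]=0$, which is exactly the same identity written on the $D$ side rather than the $\Gamma$ side.

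One minor notational point: in this paper $\|U\|_{L^2(A\times E_A)}$ is \emph{defined} as $\sum_{a}\E[U_a^2]$ (no square root), so the quantity you call $\|DF\|^2_{L^2(A\times E_A)}$ should be written here simply as $\|DF\|_{L^2(A\times E_A)}$ to match the statement. The mathematics is unaffected.
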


\begin{proof}[Proof of proposition~\ref{prop-discrete-mall:efron-stein-spectral}]
  Let us use the previous covariance identity, we have:
  \begin{align*}
    \var [F] &= \cov (F, F) = \E [\Gamma(F, -\mathsf{L}^{-1} F)] \\
             &= \E \left[\Gamma \left(\sum_{p=1}^m F_p, \sum_{q=1}^m \frac{1}{q} F_q \right) \right] \\
             &= \sum_{p=1}^m \sum_{q=1}^m \frac{1}{q}  \E \left[\Gamma \left( F_p, F_q \right) \right] \\
             &= \sum_{p=1}^m \frac{1}{p} \E \left[\Gamma \left( F_p, F_p \right) \right] \text{ because }\E [\Gamma(F_p, F_q)] = 0 \text{ for }q \ne p.
  \end{align*}
  It yields the inequality \eqref{eq-discrete-mall:efron-stein-finite-chaotic}
  noting that $\Gamma(F_p, F_p) \ge 0$ for all $p > 0$.
\end{proof}
We now deduce the conditional first-order Poincaré inequality for functionals of
conditionally independent random variables. The equivalent for functionals of
independent random variables is rather known as the Efron-Stein inequality in
the literature \cite{efron1981jackknife}.
\begin{theorem}[Conditional Efron-Stein inequality]
  \label{thm-discrete-mall:efron-stein}
  For $F \in L^2(E_A)$ such that $\excond{F}{Z} = 0$,
  \begin{equation}
    \var [F | Z] \le \excond{\Gamma(F, F)}{Z}.
  \end{equation}
\end{theorem}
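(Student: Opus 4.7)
The plan is to mimic the spectral/chaos proof of Proposition \ref{prop-discrete-mall:efron-stein-spectral}, but working under the conditional law $\mathbf{P}^Z$ rather than under $\mathbf{P}$. Since $\excond{F}{Z}=\pi_{0}(F)=0$, the chaos decomposition gives
$$F=\sum_{n\ge 1}\pi_{n}(F),\qquad \pi_{n}(F)\in\mathfrak{C}_{n}=\ker(\mathsf{L}+n\,\mathrm{Id}).$$

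The first ingredient I need is conditional orthogonality of the chaoses: for $n\neq m$, $\excond{\pi_{n}(F)\,\pi_{m}(F)}{Z}=0$ almost surely. Using the description in Corollary~\ref{cor-discrete-mall:mathfrak}, write $\pi_{n}(F)=\sum_{|J|=n}\psi_{J}$ and $\pi_{m}(F)=\sum_{|J'|=m}\psi_{J'}$; for $J\neq J'$, pick $a$ in the symmetric difference, and then the Hoeffding-type vanishing property of the $\psi_{K}$ forces $\excond{\psi_{J}\psi_{J'}}{\mathcal{G}_{A\setminus\{a\}}}=0$, so $\excond{\psi_{J}\psi_{J'}}{Z}=0$ after a further conditioning. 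Consequently
$$\var[F\mid Z]\;=\;\sum_{n\ge 1}\excond{\pi_{n}(F)^{2}}{Z}.$$

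The second ingredient is a conditional version of the Integration by parts II identity. By Mehler's formula in Lemma~\ref{lem-discrete-mall:pseudo-ergodicity}, $\excond{P_{t}H}{Z}=\excond{H}{Z}$ for every $t\ge 0$ and $H\in L^{2}(E_{A})$; differentiating at $t=0$ yields $\excond{\mathsf{L}H}{Z}=0$ for $H\in\dom{\mathsf{L}}$. Applied to $H=F^{2}$ together with $\Gamma(F,F)=\tfrac{1}{2}\mathsf{L}(F^{2})-F\mathsf{L}F$, this gives $\excond{\Gamma(F,F)}{Z}=-\excond{F\mathsf{L}F}{Z}$.

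Putting the pieces together, using $\mathsf{L}\pi_{n}(F)=-n\,\pi_{n}(F)$ and the conditional orthogonality established above,
$$\excond{\Gamma(F,F)}{Z}\;=\;-\excond{F\mathsf{L}F}{Z}\;=\;\sum_{n\ge 1}n\,\excond{\pi_{n}(F)^{2}}{Z}\;\ge\;\sum_{n\ge 1}\excond{\pi_{n}(F)^{2}}{Z}\;=\;\var[F\mid Z],$$
the inequality being the conditional spectral gap $n\ge 1$. The main obstacle is the rigorous justification of the conditional orthogonality step; once that is secured the argument is immediate. An alternative route would be to apply the covariance identity of Corollary~\ref{cor:cov-identity} with $G=F$ and exploit invariance of $\mathbf{P}^{Z}$ under the Glauber semigroup to rewrite the time integral, but the chaotic route above is shorter and directly mirrors the unconditional spectral argument.
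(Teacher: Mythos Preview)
Your proof is correct, but it takes a genuinely different route from the paper. The paper proves the inequality via the dynamical side of the Dirichlet structure: it applies the conditional covariance identity of Corollary~\ref{cor:cov-identity} with $G=F$, bounds the resulting time integral by Cauchy--Schwarz, and then uses the invariance of $\mathbf{P}^{Z}$ under the Glauber semigroup to identify both factors with $\sum_{a}\excond{(D_{a}F)^{2}}{Z}=\excond{\Gamma(F,F)}{Z}$. In other words, the paper follows precisely the ``alternative route'' you mention in your last sentence.

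Your spectral argument is arguably cleaner and yields more information: it exhibits the exact identities $\var[F\mid Z]=\sum_{n\ge 1}\excond{\pi_{n}(F)^{2}}{Z}$ and $\excond{\Gamma(F,F)}{Z}=\sum_{n\ge 1}n\,\excond{\pi_{n}(F)^{2}}{Z}$, from which one reads off not only the inequality but also the equality case ($F\in\mathfrak{C}_{1}$) and, more generally, the improved constant $1/p$ when $F\in\mathfrak{C}_{p}$, mirroring Proposition~\ref{prop-discrete-mall:efron-stein-spectral} conditionally. The conditional orthogonality you flag as the ``main obstacle'' is in fact immediate once you observe that $D_{a}$, and hence $\mathsf{L}$, is self-adjoint with respect to $\excond{\cdot}{Z}$ (since $\sigma(Z)\subset\mathcal{G}^{a}$ and conditional expectation is self-adjoint); eigenspaces for distinct eigenvalues of a conditionally self-adjoint operator are conditionally orthogonal, so no term-by-term argument over the $\psi_{J}$ is needed. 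The paper's covariance-identity approach, on the other hand, avoids the chaos machinery entirely and extends more directly to covariance bounds between two distinct functionals.
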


\begin{proof}[Proof of theorem~\ref{thm-discrete-mall:efron-stein}]
  The conditional covariance formula yields
  \begin{align*}
    \var [F | Z] &= \int_0^{\infty} e^{-u} \sum_{a \in A} \excond{(D_a F )(\deltaa F)(X_u^\circ, X'_a)}{Z} \dif u \\
                 &\le  \int_0^{\infty} e^{-u}  \sqrt{\sum_{a \in A}\E [(D_a F )^2 | Z]} \sqrt{\sum_{a \in A}\E[\espxz{(\deltaa F)(X_u^\circ, X'_a)}^2 | Z]} \dif u. \\
  \end{align*}
  The invariance of $\mathbf{P}^Z$ under the Glauber dynamics entails that
  \begin{equation*}
    \sum_{a \in A}\E \left[\espxz{(\deltaa F)(X_u^\circ, X'_a)}^2 | Z \right] = \sum_{a \in A}\E[(D_a F )^2 | Z].
  \end{equation*}
  Hence,
  \begin{equation*}
    \var[F | Z] \le \E [\Gamma(F, F) | Z],
  \end{equation*}
  proving the theorem.
\end{proof}

%
We find a version of the McDiarmid's inequality for conditionally independent
random variables.
\begin{theorem}[Conditional McDiarmid's inequality]
  \label{thm-discrete-mall:conditional-mcdiarmid}
  Let $F$ be a square-integrable functional such that for all $a \in A$:
  $$\sup_{\substack{x^{\{a\}} \in E_{A \sminus \{a\}} \\ x'_a \in E_a}} |F(x^{\{a\}}, x'_a) - F(x)| \le d_a.$$
  For any $x > 0$, we have the inequality:
  \begin{equation}
    \mathbb{P}(F(X) - \excond{F(X)}{Z}\geq x | Z) \le \exp \left(-\frac{x^2}{2\sum_{a \in A} d_a^2}\right).
  \end{equation}
\end{theorem}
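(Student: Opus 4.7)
The plan is to reduce the statement to a conditional Azuma--Hoeffding inequality via a Doob martingale obtained by revealing the variables $(X_a)_{a \in A}$ one at a time. Since all the relevant bounds are uniform in $Z$, everything is carried out on the conditional probability space given $Z$, where the $X_a$ become independent. The Malliavin apparatus is not strictly needed here; the hypothesis is a sup-norm bound on the difference operator $\Delta^{\{a\}}F$, which naturally produces bounded martingale increments.

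First, I would enumerate $A = (a_n)_{n\ge 1}$ (if $A$ is infinite; otherwise stop at $|A|$) and introduce the filtration $\mathcal{F}_0 = \sigma(Z)$ and $\mathcal{F}_n = \sigma(X_{a_1},\ldots,X_{a_n}) \vee \sigma(Z)$. Setting $M_n = \excond{F}{\mathcal{F}_n}$, the process $(M_n)_{n\ge 0}$ is a square-integrable $(\mathcal{F}_n)$-martingale with $M_0 = \excond{F}{Z}$, converging to $F$ in $L^2$ and almost surely by martingale convergence.

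The next step is to show $|M_n - M_{n-1}| \le d_{a_n}$. Let $X' = (X'_a)_{a\in A}$ be, on an enlarged space, a conditionally independent copy of $X$ given $Z$. Since the $X_a$ are conditionally independent given $Z$, the family $(X_{a_k})_{k > n}$ has the same conditional law given $\mathcal{F}_n$ as $(X'_{a_k})_{k > n}$, and similarly with $n$ replaced by $n-1$. Writing out $M_n$ and $M_{n-1}$ with these representations gives
\begin{equation*}
  M_n - M_{n-1} = \E\bigl[ F(X_{a_1},\ldots,X_{a_n}, X'_{a_{n+1}}, \ldots) - F(X_{a_1},\ldots,X_{a_{n-1}}, X'_{a_n}, X'_{a_{n+1}},\ldots) \bigm| \mathcal{F}_n \bigr],
\end{equation*}
an integrand that differs only in the $a_n$-th coordinate. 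By the hypothesis $\sup |\deltaa F| \le d_a$ and Jensen's inequality, $|M_n - M_{n-1}| \le d_{a_n}$.

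Finally, Hoeffding's lemma applied to the conditional law of $M_n - M_{n-1}$ given $\mathcal{F}_{n-1}$ (a centered variable with range at most $2 d_{a_n}$) yields $\excond{e^{\lambda(M_n - M_{n-1})}}{\mathcal{F}_{n-1}} \le \exp(\lambda^2 d_{a_n}^2/2)$. Iterating via the tower property gives, for every $n$,
\begin{equation*}
  \excond{e^{\lambda(M_n - M_0)}}{Z} \le \exp\Bigl(\tfrac{\lambda^2}{2}\sum_{k=1}^n d_{a_k}^2\Bigr).
\end{equation*}
Passing to the limit $n\to\infty$ by Fatou and optimizing the Chernoff bound $\mathbb{P}(F - \excond{F}{Z} \ge x | Z) \le e^{-\lambda x} \excond{e^{\lambda(F - \excond{F}{Z})}}{Z}$ at $\lambda = x / \sum_a d_a^2$ yields the stated inequality (the bound is vacuous if $\sum_a d_a^2 = +\infty$). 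The only real obstacle is the passage to the limit when $A$ is countably infinite, which is handled by Fatou's lemma applied to the exponential martingale, together with the $L^2$-convergence $M_n \to F$ that comes for free from square-integrability of $F$.
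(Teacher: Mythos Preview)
Your proof is correct, but it follows a genuinely different route from the paper's. You are essentially reproducing McDiarmid's original martingale argument, carried out conditionally on $Z$: a Doob martingale obtained by revealing the coordinates one at a time, bounded increments from the hypothesis, and Hoeffding's lemma plus the tower property. The paper explicitly avoids this route. Instead, it exploits the Malliavin--Dirichlet machinery built earlier: it applies the conditional covariance identity (Corollary~\ref{cor:cov-identity}) to $\cov(F, e^{tF}\mid Z)$, uses the elementary bound $|e^{tx}-e^{ty}|\le \tfrac{t}{2}|x-y|(e^{tx}+e^{ty})$ together with the uniform bound on $\deltaa F$ to obtain the differential inequality $\E[Fe^{tF}\mid Z]\le t\bigl(\sum_a d_a^2\bigr)\E[e^{tF}\mid Z]$, integrates this in $t$, and then optimizes the Chernoff bound. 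Both arguments yield the same constant $\exp(-x^2/2\sum_a d_a^2)$. Your approach is more elementary and self-contained, with no need for the semigroup or the covariance representation; the paper's approach, in the spirit of Houdr\'e--Privault, serves to illustrate that the conditional Dirichlet structure developed in the paper is rich enough to recover such concentration inequalities directly.
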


Our strategy of proof is different from the original McDiarmid's original proof
in \cite{mcdiarmid1989method}.

\begin{proof}[Proof of theorem~\ref{thm-discrete-mall:conditional-mcdiarmid}]
  We assume that $F = F(X)$ is a bounded random variable verifying
  $\espz{F} = 0$. Using the inequality:
  \begin{equation}
    |e^{tx} - e^{ty}| \le \frac{t}{2} |x - y| (e^{tx} + e^{ty}) \quad \forall \: x, y \in \R.
  \end{equation}
  We have:
  \begin{align*}
    |\deltaa e^{t F}(X,X'_a)| &= |e^{tF} - e^{t F^{\{a\}'}}| \\
                              &\le \frac{t}{2} |\deltaa F(X, X'_a)|\left(e^{tF} + e^{t F^{\{a\}'}}\right). \\
  \end{align*}
  Applying the covariance identity, it yields:
  \begin{align*}
    \E [F e^{t F} | Z]
    &= \int_0^{\infty} e^{-u} \sum_{a \in A} \E [D_a e^{t F}\deltaa F (X_u^\circ, X'_a) | Z] \dif u \\
    &\le \int_0^{\infty} e^{-u} \sum_{a \in A} \E \left[\espxz{|\deltaa e^{t F} (X, X'_a)|} \deltaa F (X_u^\circ, X'_a) | Z \right] \dif u \\
    &\le \frac{t}{2} \int_0^\infty e^{-u}  \sum_{a \in A} \E \left[|\deltaa  F(X, X'_a) |e^{tF} |\deltaa F (X_u^\circ, X'_a) | Z\right] \dif u\\
    &+ \frac{t}{2} \int_0^\infty e^{-u} \sum_{a \in A} \excond{|\deltaa  F(X, X'_a)| e^{t F^{\{a\}'}} |\deltaa(X_u^\circ, X'_a) |}{Z} \dif u\\
  \end{align*}
  by using the Jensen's inequality for conditional expectation in the second
  inequality.
  Since $|\deltaa F(X, X'_a)|^2 \le d_a$, $|\deltaa F(X^\circ_u, X'_a)| \le d_a$ for all $u \in \R^+$ and
  $\E [e^{t F^{\{a\}'}} | Z] = \E [e^{t F} | Z]$, this shows that:
  \begin{equation*}
    \E [F e^{t F} | Z] \le \left(\sum_{a \in A} d_a^2 \right) t \E[e^{t F} | Z] = t K^2  \E [e^{t F} | Z],
  \end{equation*}
  where $K^2 := \sum_{a \in A} d_a^2$. Thus, in all generality for $F$ bounded:
  \begin{align*}
    \log \E [e^{t(F- \E[F])} | Z] &= \int_{0}^t \frac{\E [(F - \E[F | Z]) e^{s(F - \E[F |Z])} | Z]}{\E[e^{s(F - \E[F])} ]} \dif s\\
                                  &\le K^2 \int_{0}^t s \dif s = \frac{t^2}{2} K^2,
  \end{align*}
  hence:
  \begin{align*}
    e^{tx} \mathbb{P}(F - \E[F | Z] > x | Z) &\le \E [e^{t(F - \E[F | Z])} | Z] \\
                                             &= e^{t^2 K^2 / 2}, \quad t \geq 0,\\
  \end{align*}
  and:
  \begin{equation*}
    \mathbb{P}(F - \E[F | Z] \geq x | Z) \le e^{\frac{t^2}{2} K^2 - tx}, \quad t \ge 0.
  \end{equation*}
  The minimum of the right-hand side is obtained for $t = x / K^2$. If $F$ is
  not bounded, the conclusion holds for $F_n = \max (-n, \min(F, n))$,
  $n \ge 0$, and $(F_n)_{n \in \N}$ converges
  $\mathbb{P}$-a.s. to $F$. Hence:
  \begin{equation*}
    \mathbb{P}(F - \E[F | Z] \geq x | Z) \le \exp\left(-\frac{x^2}{2 K^2} \right) = \exp \left(-\frac{x^2}{2\sum_{a \in A} d_a^2}\right).
  \end{equation*}
  The proof is thus complete.
\end{proof}


\section{Applications to normal approximation}
\label{sec:normal-approximation}

%
The goal is to bound for instance the 1-Wasserstein distance
\begin{equation*}
  d_W (\mathcal{L}(F(X)), \mathcal{L}(Y))|:= \sup_{h \in \mathcal{H}} |\E[h(F(X))] - \E [h(Y)]|
\end{equation*}
for $\mathcal{H}$ the set of 1-Lipschitz functions and $Y$ the random variable
following the target distribution. We recall the lemma 4.2 of
\cite{chatterjee2008new} which provides with a standard implementation of the
Stein's method for this probabilistic distance with respect to the normal
distribution $\Normal (0,1)$.

\begin{lemma}[Normal approximation]
  \label{lem-discrete-mall:stein-normal}
  Let $L^\dag h(x) := h'(x) - x h(x)$. Then,
  \begin{equation}
    \label{eq-discrete-mall::stein-normal-bound}
    d_W (\mathcal{L}(F(X)), \Normal(0,1)) \le \sup_{\varphi \in \mathcal{H}_{*}} \left|\E [L^\dag  \varphi(F(X))] \right|,
  \end{equation}
  where
  $\mathcal{H}_* := \{h \in C^2(\R, \R): \: \|h'\|_{\infty} \le \sqrt{\frac{2}{\pi}}, \|h''\|_\infty \le 2\}$.
\end{lemma}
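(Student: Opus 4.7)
The plan is to follow the classical Stein's method recipe. The starting point is Stein's characterization of the standard Gaussian: $Y \sim \Normal(0,1)$ if and only if $\E[L^\dag \varphi(Y)] = \E[\varphi'(Y) - Y\varphi(Y)] = 0$ for every absolutely continuous $\varphi$ with $\E[|\varphi'(Y)|] < \infty$. For each fixed $h \in \mathcal{H}$ (i.e.\ 1-Lipschitz), I would introduce the Stein equation
\begin{equation*}
  \varphi'(x) - x\varphi(x) = h(x) - \E[h(Y)],
\end{equation*}
which admits the explicit solution
\begin{equation*}
  \varphi_h(x) = e^{x^2/2} \int_{-\infty}^{x} \bigl(h(y) - \E[h(Y)]\bigr)\, e^{-y^2/2}\, \D y.
\end{equation*}

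Evaluating this equation at $F(X)$ and taking expectations gives the exact identity
\begin{equation*}
  \E[h(F(X))] - \E[h(Y)] = \E\bigl[L^\dag \varphi_h(F(X))\bigr].
\end{equation*}
Hence, passing to absolute values and taking the supremum over $h \in \mathcal{H}$ yields
\begin{equation*}
  d_W(\mathcal{L}(F(X)), \Normal(0,1)) \le \sup_{h \in \mathcal{H}} \bigl| \E[L^\dag \varphi_h (F(X))] \bigr|.
\end{equation*}
The claim then follows provided the map $h \mapsto \varphi_h$ sends $\mathcal{H}$ into $\mathcal{H}_*$, since replacing the supremum over this image by the supremum over the larger set $\mathcal{H}_*$ can only enlarge the right-hand side.

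The only real work, and the main (mild) obstacle, is therefore to verify the derivative bounds $\|\varphi_h'\|_\infty \le \sqrt{2/\pi}$ and $\|\varphi_h''\|_\infty \le 2$ when $h$ is 1-Lipschitz. For the first bound, I would differentiate the explicit formula, rewrite $\varphi_h'$ using the Stein equation as $h - \E h(Y) + x\varphi_h(x)$, and then use the classical Mills' ratio estimate together with the bound $|h(x) - \E h(Y)| \le \E|x - Y|$ (since $h$ is 1-Lipschitz) to control the supremum by $\sqrt{2/\pi}$. For the second bound, I would differentiate once more, use that $\varphi_h'' = h' + \varphi_h + x\varphi_h'$ (from differentiating the Stein equation), and conclude via the same Mills' ratio estimates, exploiting $\|h'\|_\infty \le 1$. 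These are the derivative bounds that appear in \cite{chatterjee2008new} and in the Nourdin--Peccati treatment, and their verification is purely analytical, independent of the Malliavin structure developed above. Once $\varphi_h \in \mathcal{H}_*$ is established, the proof is complete.
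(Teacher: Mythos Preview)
Your proposal is correct and follows the standard Stein's method recipe. The paper itself does not prove this lemma: it is stated without proof as a recollection of lemma~4.2 in \cite{chatterjee2008new}, so there is no in-paper argument to compare against. Your sketch is exactly the classical derivation found in that reference and in the Nourdin--Peccati monograph.
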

%
In the following, we denote $d_W (\mathcal{L}(F(X)), \Normal(0,1))$ by
$d_{W}(F, \Normal (0, 1))$. For sake of conciseness, we denote by $\deltaap F$ the quantity $\deltaa F(X, X'_a)$.

\subsection{Rates in Lyapunov's conditional central limit}
\label{sec:lyapunov}

\begin{lemma}
  \label{lem-discrete-mall:bounds-wass-gradient}
  For any $F \in \cyl$ such that $\excond{F}{Z} = 0$. Then,
  \begin{multline}
    d_{W}(F, \Normal (0, 1)) \le \sup_{\psi \in \mathcal{H}_*} \left|\E \left[\sum_{a \in A} \psi (F(X^{\{a\}},X'_a)) \deltaap F D_a(-\mathsf{L}^{-1} F) - \psi(F)\right]\right| \\
    + \sum_{a \in A} \E [(\deltaap F)^2 |D_a \mathsf{L}^{-1} F|].
  \end{multline}
\end{lemma}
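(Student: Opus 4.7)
The plan is to combine the Stein bound in Lemma~\ref{lem-discrete-mall:stein-normal} with the integration by parts formula coming from the Dirichlet structure, then replace the discrete gradient of $\varphi(F)$ by a difference operator via the representation in Lemma~\ref{lem_discrete-mall-core:diffOperators}, and finally dispose of the non-chain-rule remainder by a second-order Taylor expansion of $\varphi$.

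First, I would start from
$$d_{W}(F,\Normal(0,1))\le \sup_{\varphi\in\mathcal{H}_*}|\E[\varphi'(F)-F\varphi(F)]|,$$
and rewrite $F = \mathsf L\mathsf L^{-1}F$ using that $\E[F|Z]=0$ so $\mathsf L^{-1}F$ is well-defined. Integration by parts through the Dirichlet form gives
$$\E[F\varphi(F)] = -\E[\mathsf L(\mathsf L^{-1}F)\,\varphi(F)] = \mathcal E(\varphi(F),\mathsf L^{-1}F) = \sum_{a\in A}\E[D_a\varphi(F)\,D_a\mathsf L^{-1}F],$$
i.e. $\E[F\varphi(F)] = -\sum_a \E[D_a\varphi(F)\,D_a(-\mathsf L^{-1}F)]$.

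Next, I would invoke the representation of the discrete gradient as a conditional expectation of the difference operator. Since $D_a \mathsf L^{-1}F$ is $\sigma(X,Z)$-measurable, conditioning yields
$$\E[D_a\varphi(F)\,D_a\mathsf L^{-1}F] = \E\!\left[\bigl(\varphi(F)-\varphi(F^{\{a\}'})\bigr)\,D_a\mathsf L^{-1}F\right],$$
where $X'_a$ is a conditionally independent copy of $X_a$ given $Z$. I then apply the second-order Taylor formula
$$\varphi(F)-\varphi(F^{\{a\}'}) = \varphi'(F^{\{a\}'})\,\deltaap F \;+\; \tfrac{1}{2}\varphi''(\xi_a)(\deltaap F)^2,$$
with $\xi_a$ between $F$ and $F^{\{a\}'}$. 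The first-order term matches exactly the main term in the statement (with $\psi=\varphi'$), and since $\|\varphi''\|_\infty\le 2$ on $\mathcal{H}_*$, the remainder is bounded in absolute value by $(\deltaap F)^2$.

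Collecting, with $\psi := \varphi'$,
\begin{equation*}
\E[L^\dag\varphi(F)] = \E[\psi(F)] - \sum_{a\in A}\E[\psi(F^{\{a\}'})\,\deltaap F\,D_a(-\mathsf L^{-1}F)] + \sum_{a\in A}\mathcal R_a,
\end{equation*}
where $|\mathcal R_a|\le \E[(\deltaap F)^2\,|D_a\mathsf L^{-1}F|]$. Taking absolute values, using the triangle inequality, and taking the supremum over $\varphi\in\mathcal{H}_*$ (which corresponds to the supremum over $\psi$ in the admissible class) gives the claimed bound. The only slightly delicate point is that the Stein test function class involves $\varphi$ itself while the main term features $\varphi'$; this is a bookkeeping matter rather than a genuine obstacle, and it is handled by reading $\psi$ as $\varphi'$ ranging over the image of $\mathcal{H}_*$ under differentiation (so the upper bound remains valid when written as a supremum over $\psi$). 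The core difficulty—if any—is the second-order remainder bound, which requires $\varphi\in C^2$ with bounded second derivative, precisely what is built into $\mathcal{H}_*$.
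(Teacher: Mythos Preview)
Your argument is essentially the paper's own proof: Stein bound, then $F=\mathsf L\mathsf L^{-1}F$, integration by parts to bring in $D_a\mathsf L^{-1}F$, representation of $D_a\varphi(F)$ as the conditional expectation of the difference $\varphi(F)-\varphi(F^{\{a\}'})$, and a second-order Taylor expansion around $F^{\{a\}'}$ with the remainder controlled by $\|\varphi''\|_\infty\le 2$. The only slip is a sign in your first chain: $\E[F\varphi(F)] = +\E[\mathsf L(\mathsf L^{-1}F)\varphi(F)]$, not minus; this is harmless since you take absolute values and it leaves the final bound intact.
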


\begin{proof}[Proof of lemma \ref{lem-discrete-mall:bounds-wass-gradient}]
  We compute:
  $$\sup_{f^{\dag} \in \mathcal{H}_* } |\E [F (f^{\dag})(F)-  (f^{\dag})'(F)]|.$$
  Since $F$ is centered,
  \begin{align*}
    \E [F (f^{\dag})(F)] &= \E [\mathsf{L} (\mathsf{L}^{-1} F) f^{\dag}(F)] \\
                         &= -\sum_{a \in A} \E [D_a \mathsf{L}^{-1}F D_a f^{\dag}(F)]\text{ by integration by parts } \\
                         &= -\sum_{a \in A} \E \left[D_a \mathsf{L}^{-1}F \excond{(f^{\dag})'(F) - f^{\dag}(F^{\{a\}'})}{X,Z}\right] \\
                         &= -\sum_{a \in A} \E [D_a \mathsf{L}^{-1}F \deltaap f^{\dag}(F)].
  \end{align*}
  Then, we use the Taylor expansion taking the reference point to be
  $F^{\{a\}'}$ instead of $F$, for all $a \in A$ yielding:
  \begin{align*}
    \deltaap f^{\dag}(F)&= f^{\dag}(F) - f^{\dag}(F^{\{a\}'}) \\
                              &= (f^{\dag})'(F^{\{a\}'}) \deltaap F + R_a, \\
  \end{align*}
  with
  $|R_a| \le \frac{\|(f^{\dag})''\|_{\infty}}{2} (\deltaap F)^2 = (\deltaap F)^2 $.
  Then,
  \begin{multline*}
    |\E [F f^{\dag}(F)-  (f^{\dag})'(F)]| \\
    \begin{aligned}
      & \le \left|\E \left[\sum_{a \in A}\deltaap F (D_a(-\mathsf{L}^{-1} F)) \left((f^{\dag})'(F^{\{a\}'})  - (f^{\dag})'(F) \right) \right] \right| \\
      &+ \sum_{a \in A} \E [(\deltaap F)^2 |D_a \mathsf{L}^{-1} F|].
    \end{aligned}
  \end{multline*}
  Because $(f^{\dag})''$ has Lipschitz-constant equal to $2$, we get the result.
\end{proof}

We prove a quantitative Lyapunov's conditional central limit theorem for random
variables with moments of order 3.
\begin{corollary}[Lyapunov's conditional central limit theorem]
  \label{cor-discrete-mall:lyapunov}
  Let $(X_n)_{n \in \N}$ be a sequence of thrice integrable, conditionally
  independent random variables given a latent random variable $Z$. Let us
  observe that
  \begin{equation*}
    \sigma_{j, Z}^2 = \var (X_j|Z), \: s_{n, Z}^2 = \sum_{j=1}^n \sigma_{j, Z}^2 \text{ and } \bar{X}_n = \frac{1}{s_{n, Z}} \sum_{j=1}^n \left(X_j - \excond{X_j}{Z}\right).
  \end{equation*}
  Then,
  \begin{equation}
    d_W (\bar{X}_n, \Normal (0, 1)) \le  2 (\sqrt{2} + 1)  \E \left[ \frac{1}{s_{n, Z}^3}\sum_{i=1}^n \left|X_i - \excond{X_i}{Z} \right|^3 \right].
  \end{equation}
\end{corollary}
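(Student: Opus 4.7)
The plan is to apply the Malliavin--Stein bound of Lemma~\ref{lem-discrete-mall:bounds-wass-gradient} conditionally on $Z$ with $F := \bar{X}_n$, and then integrate out $Z$. Setting $Y_j := X_j - \excond{X_j}{Z}$, one has $F = \frac{1}{s_{n,Z}}\sum_{j=1}^n Y_j$ and $\excond{F}{Z} = 0$. Since $s_{n,Z}$ is $\sigma(Z)$-measurable, conditionally on $Z$ the $Y_j$ are genuinely independent centered random variables and $F$ is cylindrical with $Z$-deterministic coefficients, so the lemma (whose proof carries over verbatim with every expectation replaced by a conditional expectation given $Z$) applies. Convexity of $d_W$ then yields
\begin{equation*}
d_W(\bar{X}_n, \Normal(0,1)) \le \E\bigl[d_W(\mathcal{L}(\bar{X}_n\,|\,Z),\, \Normal(0,1))\bigr],
\end{equation*}
reducing the problem to estimating the conditional distance.

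The Malliavin operators are read off the definitions. From $\excond{Y_a}{Z} = 0$ and conditional independence, $\excond{F}{\mathcal{G}^a} = F - Y_a/s_{n,Z}$, hence $D_a F = Y_a/s_{n,Z}$. Summing gives $\mathsf{L} F = -F$, so $-\mathsf{L}^{-1} F = F$ on the $Z$-centered space and $D_a(-\mathsf{L}^{-1} F) = Y_a/s_{n,Z}$. Finally $\deltaap F = (Y_a - Y'_a)/s_{n,Z}$, where $X'_a$ is a conditionally independent copy of $X_a$ given $Z$.

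Plugging these into Lemma~\ref{lem-discrete-mall:bounds-wass-gradient} gives two contributions. For the first, the key observation is that, conditionally on $\mathcal{G}^a$, $Y_a$ is independent of both $\psi(F^{\{a\}'})$ and $Y'_a$ with $\excond{Y_a}{\mathcal{G}^a}=0$, so
\begin{equation*}
\excond{\psi(F^{\{a\}'})(Y_a - Y'_a)Y_a}{Z} = \sigma_{a,Z}^2\, \excond{\psi(F^{\{a\}'})}{Z}.
\end{equation*}
Because $\sum_a \sigma_{a,Z}^2/s_{n,Z}^2 = 1$, the first term telescopes into $\sum_a (\sigma_{a,Z}^2/s_{n,Z}^2)\,\excond{\psi(F^{\{a\}'}) - \psi(F)}{Z}$, which I would bound using $\|\psi'\|_\infty \le 2$, $|F^{\{a\}'} - F| = |Y_a - Y'_a|/s_{n,Z}$, and the Cauchy--Schwarz estimate $\excond{|Y_a - Y'_a|}{Z} \le \sqrt{2}\,\sigma_{a,Z}$, yielding $\frac{2\sqrt{2}}{s_{n,Z}^3}\sum_a \sigma_{a,Z}^3$. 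For the second contribution, expanding $(Y_a - Y'_a)^2$ and using the conditional independence and centering of $Y'_a$ kills the cross term and leaves $\excond{(Y_a - Y'_a)^2 |Y_a|}{Z} = \excond{|Y_a|^3}{Z} + \sigma_{a,Z}^2 \excond{|Y_a|}{Z} \le 2\,\excond{|Y_a|^3}{Z}$ by Jensen. The conditional Jensen inequality $\sigma_{a,Z}^3 = (\excond{Y_a^2}{Z})^{3/2} \le \excond{|Y_a|^3}{Z}$ then consolidates both estimates, and taking expectations in $Z$ delivers the constant $2(\sqrt{2}+1)$.

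The main delicate point is the sharpness of the constant: bounding $|Y_a - Y'_a|$ by $|Y_a|+|Y'_a|$ and treating the quadratic term crudely would give a strictly worse prefactor (of order $6$ instead of $2(\sqrt{2}+1)\approx 4.83$). The improvement requires using the $L^2$ estimate $\excond{|Y_a - Y'_a|}{Z} \le \sqrt{2}\sigma_{a,Z}$ in the first contribution, together with careful exploitation of the vanishing cross term $Y_a Y'_a$ in the second.
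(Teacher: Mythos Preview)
Your proof is correct and follows exactly the route the paper indicates (apply Lemma~\ref{lem-discrete-mall:bounds-wass-gradient}, compute $D_aF=Y_a/s_{n,Z}$, $-\mathsf{L}^{-1}F=F$, $\deltaap F=(Y_a-Y'_a)/s_{n,Z}$, and estimate each contribution), matching the strategy of \cite[Cor.~5.11]{decreusefond:hal-01565240}. As a side remark, your ``telescoped'' first term $\sum_a(\sigma_{a,Z}^2/s_{n,Z}^2)\,\excond{\psi(F^{\{a\}'})-\psi(F)}{Z}$ actually vanishes identically (since $F^{\{a\}'}$ and $F$ have the same conditional law given $Z$), so the Lipschitz bound you work hard to optimize is unnecessary and the argument in fact delivers the constant $2$ rather than $2(\sqrt{2}+1)$.
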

The proof of the corollary follows the same steps as the one of \cite[Corollary
5.11]{decreusefond:hal-01565240}, using
lemma~\ref{lem-discrete-mall:bounds-wass-gradient}.

\begin{example}[Conditional Bernoulli random variables]
  Let $(U_i)_{i \in \N}$ independent uniform random variables, and
  $X_i = \ind{U_i \le Z}$, with $Z$ an arbitrary random variable lying in
  $[0, 1]$, then $(X_i)_{i \in \N}$ forms a sequence of conditionally
  independent random variables given $Z$.
  The law of $\mathcal{L}(X_i|X^{\{i\}}, Z)$ is a Bernoulli law of parameter
  $Z$. We compute the right-hand side of the Lyapunov theorem in this case.
  \begin{align*}
    s_{n,Z}^2 &= n Z (1 - Z) \\
    \excond{|X_i - \excond{X_i}{Z}|^3}{Z} &= Z(1-Z)(1 - 2Z).
  \end{align*}
  Hence,
  \begin{equation*}
    d_W (\bar{X}_n, \Normal (0, 1)) \le 2 (\sqrt{2} + 1) \E \left[\frac{1 - 2Z + 2Z^2}{\sqrt{Z(1 - Z)}}\right]n^{-1/2}.
  \end{equation*}

\end{example}

\subsection{Abstract bounds for U-statistics}

The chaos decomposition has a natural interpretation as a decomposition in terms
of degenerate U-statistics.

%
\begin{definition}[U-statistic \cite{hoeffding1948class}]
    Let a family of measurable functions
    \begin{math}
      h_I : E_I \to \R.
    \end{math}
    A U-statistic of degree (or order) $p$ is defined for any $n \ge p$ by:
    \begin{equation*}
        U = \sum_{I \in (A, p)} h_I(X_I) = \sum_{I \in (A, p)} W_I.
    \end{equation*}
\end{definition}

\begin{definition}[Degenerate U-statistic]
    A degenerate U-statistic of order $p > 1$ is a U-statistic of order $p$
    such that $\excond{h_I(X_{I }^{\{a\}}, x_a)}{Z} = 0$, for all $a \in A$ and $x_a \in E_a$.
\end{definition}
%
The space of degenerate U-statistics is exactly $\mathfrak{C}_p$.
Since we consider functionals given $Z$ hereafter, $h_I$ may be $\sigma(Z)$-measurable as well.

A convenient assumption in the proofs of quantitative limit theorems is the diffusiveness of the Markov generator at hand $L$, i.e. the associated carré du champ $\Gamma_L$ satisfies for $(F, G)$ in a dense algebra of $\dom{L}$:
\begin{equation*}
  \Gamma_L (\phi(F), G) = \phi'(F) \Gamma_L (F, G).
\end{equation*}
Due to the discreteness of the Malliavin structure, the operator $\mathsf{L}$ is not diffusive, but it is close to. We devise the following pseudo chain rule.
\begin{lemma}[First pseudo chain rule]
  \label{lem-discrete-mall:approx-diffusion}
   Let $\psi \in C^1(\R, \R)$. Let $G\in \mathcal{A}$ and $F \in L^2(E_A)$ such that $\psi(F) \in \mathcal{A}$, then:
  \begin{equation}
      \Gamma (\psi (F), G) = \frac{1}{2} \sum_{a \in A} \psi'(F) \excond{(\deltaap F) (\deltaap G)  }{X, Z} + R_{\psi} (F, G),
  \end{equation}
  where:
  $$|R_{\psi} (F, G) |\le \frac{\|\psi''\|_{\infty}}{4} \sum_{a \in A} \excond{|\deltaap G |(\deltaap F)^2}{X, Z}.$$
\end{lemma}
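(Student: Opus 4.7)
The plan is to reduce the identity to the explicit expression for $\Gamma$ given by Lemma~\ref{lem_discrete-mall-core:diffOperators}, applied to the pair $(\psi(F),G)$, and then perform a second-order Taylor expansion of $\psi$ along the increment $F \to F^{\{a\}'}$. The first-order part of the expansion will produce the announced main term, and the quadratic remainder will be exactly $R_\psi(F,G)$.

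First I would apply Lemma~\ref{lem_discrete-mall-core:diffOperators} to the couple $(\psi(F), G)$, which is legitimate since $\psi(F), G \in \mathcal{A}$, yielding
\begin{equation*}
  \Gamma(\psi(F), G) = \frac{1}{2} \sum_{a \in A} \espxz{\bigl(\deltaap \psi(F)\bigr)\bigl(\deltaap G\bigr)}.
\end{equation*}
Next, under the tacit regularity hypothesis $\|\psi''\|_\infty < \infty$ that is implicit in the statement of the bound, Taylor's formula with Lagrange remainder gives, for each $a\in A$ and some random point $\xi_a$ lying between $F$ and $F^{\{a\}'}$,
\begin{equation*}
  \deltaap \psi(F) = \psi(F) - \psi(F^{\{a\}'}) = \psi'(F)\,\deltaap F - \frac{1}{2}\psi''(\xi_a)\bigl(\deltaap F\bigr)^2.
\end{equation*}
Substituting this expansion and using that $\psi'(F)$ is $\sigma(X,Z)$-measurable and therefore pulls out of $\espxz{\cdot}$, the first-order contribution is exactly $\frac{1}{2} \sum_a \psi'(F)\espxz{(\deltaap F)(\deltaap G)}$, and the quadratic contribution identifies
\begin{equation*}
  R_\psi(F,G) = -\frac{1}{4} \sum_{a \in A} \espxz{\psi''(\xi_a)\bigl(\deltaap F\bigr)^2\bigl(\deltaap G\bigr)}.
\end{equation*}

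To finish I would bound $|R_\psi(F,G)|$ termwise: the triangle inequality, the conditional Jensen inequality $|\espxz{\cdot}| \le \espxz{|\cdot|}$, and the pointwise estimate $|\psi''(\xi_a)| \le \|\psi''\|_\infty$ jointly yield the stated inequality. I do not expect a genuine obstacle here: the membership $\psi(F), G \in \mathcal{A}$ guarantees that every conditional expectation in the decomposition is well defined and that the series over $a \in A$ converges absolutely, so the split into main term plus remainder is meaningful. The whole argument is essentially Lemma~\ref{lem_discrete-mall-core:diffOperators} combined with one line of Taylor expansion; the mild bookkeeping issue is simply to record that the Lagrange point $\xi_a$ is random (depending on both $X$ and $X'_a$) but is absorbed by the uniform bound $\|\psi''\|_\infty$.
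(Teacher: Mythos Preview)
Your proposal is correct and follows essentially the same route as the paper: apply the explicit formula \eqref{eq-discrete-mall::carré-champ-F-G} for $\Gamma$ to the pair $(\psi(F),G)$, Taylor-expand the increment $\deltaap\psi(F)$ around $F$, pull $\psi'(F)$ out of the conditional expectation, and bound the quadratic remainder by $\tfrac{\|\psi''\|_\infty}{2}(\deltaap F)^2$. The only cosmetic difference is that the paper writes the remainder in integral form, $r_\psi(x,y)=\int_0^y (y-s)\psi''(x+s)\,\mathrm{d}s$, whereas you use the Lagrange form with a random intermediate point $\xi_a$; both give the same estimate.
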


\begin{proof}[Proof of lemma~\ref{lem-discrete-mall:approx-diffusion}]

  We write the Taylor expansion of $\psi$, and:
  \begin{multline*}
    \excond{(\psi (F^{\{a\}'}) - \psi(F)) (G^{\{a\}'} - G)}{X,Z} =  \excond{\psi'(F) (\deltaap F) (\deltaap G) }{X,Z} \\
      + \excond{(G^{\{a\}'} - G) r_{\psi} (F, F^{\{a\}'} - F)}{X,Z}. \\
  \end{multline*}
Then,
  \begin{multline*}
    2\Gamma (\psi (F), G) =\psi'(F) \sum_{a \in A} \excond{(\deltaap F) (\deltaap G)}{X,Z}  \\
      + \sum_{a \in A} \excond{(G^{\{a\}'} - G) r_{\psi} (F, F^{\{a\}'} - F)}{X,Z} \\
  \end{multline*}
  where:
  $$r_{\psi} (x, y) = \psi(x+y) - \psi(x) - \psi'(x) y = \int_0^y (y - s) \psi''(x+s) \dif s.$$
  We note that $r_{\psi}$ satisfies:
  $$|r_{\psi} (x, y)| \le \frac{\|\psi''\|_{\infty}}{2} y^2,$$
  and we obtain the bound on the remainder.
\end{proof}

\begin{theorem}[Bounds in 1-Wasserstein distance]
  \label{thm-discrete-mall:bounds-wass-general}
  Assume that $F \in L^3(E_A)$, such that $\excond{F}{Z} = 0$ and $\E [F^2] = 1$, then we get the bound:
  \begin{multline}
    d_W (F, \Normal(0, 1)) \le \sqrt{\frac{2}{\pi}} \E |\Gamma (F, -\mathsf{L}^{-1} F) - 1| \\
    + \frac{1}{2} \sum_{a \in A} \E [|\deltaap \mathsf{L}^{-1} F|(\deltaap F)^2].
  \end{multline}
  Moreover, if $F \in L^4(E_A)$, then one has the further bound:
  \begin{multline}
    \label{eq-discrete-mall:bound-thm-1-Wasserstein}
    d_W (F, \Normal(0, 1)) \le \sqrt{\frac{2}{\pi}} \sqrt{\var (\Gamma (F, \mathsf{L}^{-1} F))} \\ + \frac{\sqrt{2}}{2}  \sqrt{-\E [F \mathsf{L} F ]} \sqrt{\sum_{a \in A} \E [|\deltaap F|^4 ]}.
\end{multline}
\end{theorem}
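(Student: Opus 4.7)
The plan is to start from the Stein bound of Lemma~\ref{lem-discrete-mall:stein-normal}, which reduces the task to controlling $|\E[\varphi'(F) - F\varphi(F)]|$ uniformly over $\varphi \in \mathcal{H}_*$. Since $\excond{F}{Z} = 0$, I write $F = \mathsf{L}\mathsf{L}^{-1}F$ and apply Integration by Parts II through the Dirichlet form:
\begin{equation*}
    \E[F\varphi(F)] = \E[\varphi(F)\,\mathsf{L}\mathsf{L}^{-1}F] = \mathcal{E}(\varphi(F), -\mathsf{L}^{-1}F) = \E[\Gamma(\varphi(F), -\mathsf{L}^{-1}F)].
\end{equation*}
Then the first pseudo chain rule (Lemma~\ref{lem-discrete-mall:approx-diffusion}) gives
\begin{equation*}
    \Gamma(\varphi(F), -\mathsf{L}^{-1}F) = \varphi'(F)\,\Gamma(F, -\mathsf{L}^{-1}F) + R_\varphi,
\end{equation*}
with $|R_\varphi| \le \tfrac{\|\varphi''\|_\infty}{4}\sum_a \excond{|\deltaap \mathsf{L}^{-1}F|(\deltaap F)^2}{X, Z}$. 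Rearranging,
\begin{equation*}
    \E[\varphi'(F) - F\varphi(F)] = \E\bigl[\varphi'(F)\bigl(1 - \Gamma(F, -\mathsf{L}^{-1}F)\bigr)\bigr] - \E[R_\varphi],
\end{equation*}
and inserting the numerical bounds $\|\varphi'\|_\infty \le \sqrt{2/\pi}$ and $\|\varphi''\|_\infty \le 2$ together with the triangle inequality immediately yields the first displayed inequality of the theorem.

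For the refined bound under $F \in L^4(E_A)$, the plan is to apply Cauchy--Schwarz twice. Since $\E[\Gamma(F, -\mathsf{L}^{-1}F)] = \mathcal{E}(F, -\mathsf{L}^{-1}F) = -\E[F\cdot\mathsf{L}(-\mathsf{L}^{-1}F)] = \E[F^2] = 1$, Jensen (equivalently Cauchy--Schwarz) gives $\E|1 - \Gamma(F, -\mathsf{L}^{-1}F)| \le \sqrt{\var \Gamma(F, \mathsf{L}^{-1}F)}$, noting that the variance is unchanged by the sign. For the remainder term, I apply Cauchy--Schwarz inside each expectation and then across $A$:
\begin{equation*}
    \sum_{a \in A} \E[|\deltaap \mathsf{L}^{-1}F|(\deltaap F)^2] \le \sqrt{\sum_{a \in A} \E[(\deltaap \mathsf{L}^{-1}F)^2]}\;\sqrt{\sum_{a \in A} \E[(\deltaap F)^4]}.
\end{equation*}

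The main, non-mechanical step is to upgrade $\sum_a \E[(\deltaap \mathsf{L}^{-1}F)^2]$ to the factor $-\E[F\mathsf{L}F]$ appearing in the statement. The identity $\sum_a \E[(\deltaap G)^2] = 2\mathcal{E}(G,G) = -2\E[G\mathsf{L}G]$ (obtained from Lemma~\ref{lem_discrete-mall-core:diffOperators} and Integration by Parts II) applied to $G = \mathsf{L}^{-1}F$ gives $\sum_a \E[(\deltaap\mathsf{L}^{-1}F)^2] = -2\E[F\mathsf{L}^{-1}F]$. The spectral decomposition of Lemma~\ref{lem-discrete-mall:chaos-decomposition}, combined with $\excond{F}{Z} = 0$ so that $\pi_0 F = 0$, then yields
\begin{equation*}
    -\E[F\mathsf{L}^{-1}F] = \sum_{n \ge 1} \frac{\|\pi_n F\|_{L^2(E_A)}^2}{n} \le \sum_{n \ge 1} n\,\|\pi_n F\|_{L^2(E_A)}^2 = -\E[F\mathsf{L}F],
\end{equation*}
since $1/n \le n$ for $n \ge 1$. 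Chaining the inequalities and collecting constants (the factor $\tfrac{1}{2}\cdot\sqrt{2} = \tfrac{\sqrt{2}}{2}$ arises from combining the $\tfrac{1}{2}$ of the first bound with the extra $\sqrt{2}$ from $-\E[F\mathsf{L}^{-1}F] \le -\E[F\mathsf{L}F]$) produces the second displayed inequality. I expect that this spectral comparison between $\mathsf{L}^{-1}$ and $\mathsf{L}$ is the only step that is not a formal manipulation, all the rest being a careful rewriting of the Stein--Malliavin machinery already put in place.
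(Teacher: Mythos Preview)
Your proof is correct and follows essentially the same route as the paper: Stein's lemma, the identity $\E[F\varphi(F)]=\E[\Gamma(\varphi(F),-\mathsf{L}^{-1}F)]$ via $F=\mathsf{L}\mathsf{L}^{-1}F$ and Integration by Parts~II, then the first pseudo chain rule (Lemma~\ref{lem-discrete-mall:approx-diffusion}) and the numerical bounds on $\varphi',\varphi''$; for the second inequality, Jensen on the first term and Cauchy--Schwarz on the remainder. Your spectral comparison $-\E[F\mathsf{L}^{-1}F]\le -\E[F\mathsf{L}F]$ is exactly the step the paper hides behind the phrase ``then by integration by parts'', so you are in fact more explicit than the original at that point.
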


\begin{proof}[Proof of theorem \ref{thm-discrete-mall:bounds-wass-general}]
  We have:
  \begin{equation}
    \label{eq-discrete-mall::stein-eq-gen-carre}
    \begin{aligned}
        \E [L^{\dag} f^{\dag} (F)] &=  \E [F(f^{\dag})'(F) - (f^{\dag})''(F)]\\
        &= \E [\mathsf{L} \mathsf{L}^{ - 1} F (f^{\dag})'(F) ] - \E [(f^{\dag})''(F)] \\
        &= \E [\mathsf{L}^{ - 1} F \mathsf{L} ((f^{\dag})'(F) )] - \E [(f^{\dag})''(F)] \\
        &= \E [\Gamma (\mathsf{L}^{-1} ((f^{\dag})'(F) ), -\mathsf{L}^{- 1} F)] - \E [(f^{\dag})''(F)]
    \end{aligned}
  \end{equation}
  by integration by parts.
We use lemma~\ref{lem-discrete-mall:approx-diffusion} and obtain that:
\begin{equation*}
    \E [\Gamma (\mathsf{L} ((f^{\dag})'(F) ), -\mathsf{L}^{- 1} F)] \le \E [(f^{\dag})''(F) \Gamma (F, -\mathsf{L}^{ - 1} F)] + \E [R_{(f^{\dag})^{(3)}} (F, -\mathsf{L}^{ - 1} F)].
\end{equation*}
Thus,
$$\E [L^{\dag} f^{\dag} (F)] \le \sqrt{\frac{2}{\pi}} \E |\Gamma(F, -\mathsf{L}^{ -1} F) - 1| + \frac{1}{2} \sum_{a \in A} \E [|\deltaap \mathsf{L}^{-1} F| (\deltaap F)^2].$$
By Jensen's inequality for the first term and Cauchy-Schwarz inequality (for expectation of sum of random variables) for the second one, then by integration by parts, it yields:
\begin{multline*}
    \E [L^{\dag} f^{\dag} (F)] \le
    \sqrt{\frac{2}{\pi}} \sqrt{\var (\Gamma(F, \mathsf{L}^{ - 1} F))} \\
    + \frac{1}{2} \sqrt{\sum_{a \in A} \E [| \deltaap \mathsf{L}^{-1} F|^2]} \sqrt{\sum_{a \in A}  \E [(\deltaap F)^4]},
\end{multline*}
and the proof is complete.
\end{proof}

\begin{corollary}
  \label{cor-discrete-mall:bound-multi-chaos}
  If $F = \sum_{p=1}^m F_p$ is four times integrable functional where $F_p \in \ker(\mathsf{L} + p \textnormal{Id})$, then:
        \begin{multline}
          d_W (F, \Normal (0, 1)) \le \sqrt{\frac{2}{\pi}} \sum_{p, q =1}^m \frac{1}{q} \sqrt{\var \left[ \Gamma (F_p, F_q) \right]} \\
            + \sqrt{2}\sum_{p=1}^m \frac{1}{p} \sqrt{\E [F_p^2]} \left\{\sum_{p=1}^m p^{1/4} \left(  \sum_{a \in A} \E \left|\deltaap F \right|^4 \right)^{1/4} \right\}^2.
        \end{multline}
\end{corollary}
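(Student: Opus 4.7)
The plan is to invoke the fourth‐moment bound of Theorem~\ref{thm-discrete-mall:bounds-wass-general}, namely \eqref{eq-discrete-mall:bound-thm-1-Wasserstein}, and to distribute each factor across the finite chaos decomposition $F=\sum_{p=1}^m F_p$. The spectral identities from Lemma~\ref{lem-discrete-mall:chaos-decomposition} are used at the outset: $\mathsf{L}F_p=-pF_p$, so that $-\mathsf{L}^{-1}F=\sum_{p=1}^m\tfrac{1}{p}F_p$; and the $L^2$-orthogonality of distinct chaos components, giving $-\E[F\mathsf{L}F]=\sum_{p=1}^m p\,\E[F_p^2]$.

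For the first summand $\sqrt{2/\pi}\sqrt{\var(\Gamma(F,\mathsf{L}^{-1}F))}$, I use bilinearity of the carré du champ to write
\begin{equation*}
\Gamma(F,-\mathsf{L}^{-1}F) \;=\; \sum_{p,q=1}^m \frac{1}{q}\,\Gamma(F_p,F_q).
\end{equation*}
Since $\sqrt{\var(\cdot)}$ is a seminorm on $L^2(E_A)$, the triangle inequality bounds this by $\sum_{p,q=1}^m\frac{1}{q}\sqrt{\var(\Gamma(F_p,F_q))}$, matching the first displayed sum in the claim.

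For the second summand $\frac{\sqrt{2}}{2}\sqrt{-\E[F\mathsf{L}F]}\sqrt{\sum_a \E|\deltaap F|^4}$, I split each factor along the chaos. For the spectral factor I combine $\sqrt{-\E[F\mathsf{L}F]}=(\sum_p p\,\E[F_p^2])^{1/2}$ with a weighted triangle/Cauchy--Schwarz inequality in the $p$-variable to produce the coefficient $\sum_{p}\frac{1}{p}\sqrt{\E[F_p^2]}$, at the cost of transferring compensating powers of $p$ into the other factor. For the fourth-moment factor I decompose $\deltaap F=\sum_p \deltaap F_p$ and apply the triangle inequality in $L^4(A\times\Omega)$, with counting measure on $A$, to obtain
\begin{equation*}
\Bigl(\sum_{a\in A} \E|\deltaap F|^4\Bigr)^{1/4} \;\le\; \sum_{p=1}^m\Bigl(\sum_{a\in A} \E|\deltaap F_p|^4\Bigr)^{1/4};
\end{equation*}
squaring and absorbing the reserved $p^{1/4}$ weights under the inner sum then yields the claimed squared sum with the factor $p^{1/4}$.

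The main obstacle is the bookkeeping of the powers of~$p$: each application of the triangle inequality (in $L^2$ for the variance, in $L^4$ for the fourth moments) and of Cauchy--Schwarz over the chaos index reshuffles the weights, and the asymmetric exponents $1/p$ and $p^{1/4}$ in the statement force a particular order of operations. No single step is deep---only bilinearity of $\Gamma$, the chaos orthogonality/spectral identities, and elementary Hölder/triangle inequalities enter---so the argument should be short once the correct ordering of the inequalities is fixed.
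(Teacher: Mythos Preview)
Your approach is the same as the paper's: start from \eqref{eq-discrete-mall:bound-thm-1-Wasserstein}, expand via the chaos decomposition $-\mathsf{L}^{-1}F=\sum_q q^{-1}F_q$, and close with Cauchy--Schwarz and the $L^4$ triangle inequality over $A\times\Omega$. The paper's own proof is two sentences and records nothing beyond this.

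One point deserves tightening. Your ``transfer of compensating powers of $p$'' from the spectral factor into the fourth-moment factor is not a legitimate step as written: the two factors $\sqrt{-\E[F\mathsf{L}F]}$ and $(\sum_a\E|\deltaap F|^4)^{1/2}$ in \eqref{eq-discrete-mall:bound-thm-1-Wasserstein} carry no common $p$-index, so there is no weighted Cauchy--Schwarz over $p$ that shuttles weights between them. The clean route---and presumably what the paper means by ``use the decomposition of $\mathsf{L}^{-1}$''---is to step back to the pre--Cauchy--Schwarz bound in Theorem~\ref{thm-discrete-mall:bounds-wass-general}, namely $\tfrac12\sum_a\E[|\deltaap\mathsf{L}^{-1}F|(\deltaap F)^2]$, expand $|\deltaap\mathsf{L}^{-1}F|\le\sum_p p^{-1}|\deltaap F_p|$ \emph{inside} the sum, and apply Cauchy--Schwarz termwise in $p$; the $1/p$ then appears directly rather than by transfer. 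Note also that $\sqrt{-\E[F\mathsf{L}F]}$ in the theorem statement is evidently a typo for $\sqrt{-\E[F\mathsf{L}^{-1}F]}$ (compare the last display of its proof), which changes your spectral identity from $\sum_p p\,\E[F_p^2]$ to $\sum_p p^{-1}\E[F_p^2]$; without this correction the exponents cannot balance.
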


\begin{proof}[Proof of corollary~\ref{cor-discrete-mall:bound-multi-chaos}]
  We use the decomposition of $\mathsf{L}^{-1}$ as to develop the first and second terms in \eqref{eq-discrete-mall:bound-thm-1-Wasserstein}.
  The final result is obtained after using Cauchy-Schwarz inequality.
\end{proof}
%
%
That is the starting point towards a partial fourth moment limit theorem.
%


\subsection{Fourth moment phenomenon}
\label{sec:partial-fourth-moment}

We adapt the proof of \cite{azmoodeh2014fourth}, requiring a second pseudo chain rule that expresses the carré du champ operator as an approximation of a derivation operator in its two arguments.

\begin{lemma}[Second pseudo chain rule]
  \label{lem-discrete-mall:approx-diffusion-bis}
  Let $\varphi, \psi$ be twice differentiable functions such that their second derivative is bounded Lipschitz-continuous.
  Assume that $F$ a four times integrable functional such that $\varphi(F) \in \mathcal{A}$, $F \in \mathcal{A}$ and $\excond{F}{Z} = 0$, then one has:
    \begin{multline}
       \Gamma(\varphi (F), \psi(F)) = (\varphi'\psi')(F) \Gamma (F, F)
      \\ - \frac{1}{4} (\varphi'' \psi' + \varphi' \psi'') (F) \sum_{a \in A} \excond{(\deltaap F )^3}{X,Z}
      + \sum_{a \in A} R_a,
    \end{multline}

  with:
  \begin{equation*}
      R_a = \frac{1}{2} \left(\excond{R_{a, \varphi \psi}^{(4)}(F)}{X,Z}- \varphi(F) \excond{R_{a,\psi}^{(4)}(F)}{X,Z} - \psi(F) \excond{R_{a,\varphi}^{(4)}(F)}{X,Z}\right)
  \end{equation*}
  and:
    \begin{equation*}
        R_{a,\psi}^{(4)} \le \frac{\|\psi^{(4)}\|_{\infty}}{24} \excond{(\deltaap F)^4}{X,Z} \text{ for any }\psi \text{ fourth times differentiable}.
    \end{equation*}
\end{lemma}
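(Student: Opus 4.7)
The plan is to express $\Gamma(\varphi(F),\psi(F))$ via the difference operator $\deltaap$ using Lemma \ref{lem_discrete-mall-core:diffOperators}, and then reduce everything to a fourth-order Taylor expansion of the scalar functions $\varphi$, $\psi$ and their product $\varphi\psi$. The third-order correction in the statement is a signature of the non-diffusive nature of $\mathsf{L}$, so I expect it to fall out of collecting the cubic term in the expansion.

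First, a short algebraic identity eliminates the product $\deltaap\varphi(F)\cdot\deltaap\psi(F)$. Writing $\deltaap u(F)=u(F)-u(F^{\{a\}'})$ for $u\in\{\varphi,\psi,\varphi\psi\}$ and comparing the four resulting products yields
\[
(\deltaap\varphi(F))(\deltaap\psi(F))=\varphi(F)\deltaap\psi(F)+\psi(F)\deltaap\varphi(F)-\deltaap(\varphi\psi)(F).
\]
Combined with Lemma \ref{lem_discrete-mall-core:diffOperators}, this rewrites $\Gamma(\varphi(F),\psi(F))$ as
\[
\tfrac{1}{2}\sum_{a\in A}\excond{\varphi(F)\deltaap\psi(F)+\psi(F)\deltaap\varphi(F)-\deltaap(\varphi\psi)(F)}{X,Z}.
\]

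Next, for each $u\in\{\varphi,\psi,\varphi\psi\}$ I apply Taylor's formula with integral remainder at $F$ with increment $-\deltaap F$ (since $F^{\{a\}'}=F-\deltaap F$):
\[
\deltaap u(F)=u'(F)\deltaap F-\tfrac{1}{2}u''(F)(\deltaap F)^{2}+\tfrac{1}{6}u'''(F)(\deltaap F)^{3}-R^{(4)}_{a,u}(F),
\]
with $|R^{(4)}_{a,u}(F)|\le\tfrac{\|u^{(4)}\|_\infty}{24}(\deltaap F)^{4}$, which gives the claimed bound on $R^{(4)}_{a,\psi}$ and its analogues. Substituting the three expansions into the bracket above and using Leibniz's rule $(\varphi\psi)'=\varphi'\psi+\varphi\psi'$, $(\varphi\psi)''=\varphi''\psi+2\varphi'\psi'+\varphi\psi''$, $(\varphi\psi)'''=\varphi'''\psi+3\varphi''\psi'+3\varphi'\psi''+\varphi\psi'''$, the linear contribution in $\deltaap F$ cancels identically, the quadratic contribution collapses to $\varphi'(F)\psi'(F)(\deltaap F)^{2}$, and the cubic contribution collapses to $-\tfrac{1}{2}(\varphi''\psi'+\varphi'\psi'')(F)(\deltaap F)^{3}$. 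Taking the conditional expectation, summing over $a$, multiplying by $\tfrac{1}{2}$, and invoking $\Gamma(F,F)=\tfrac{1}{2}\sum_{a}\excond{(\deltaap F)^{2}}{X,Z}$ yields the two main terms of the statement, while the residual $R^{(4)}_{a,u}$ contributions recombine exactly into the prescribed $R_a$.

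The main obstacle is purely bookkeeping: verifying the cancellations at orders one, two and three. Nothing is conceptually hard, but correctness hinges on the exact combinatorial coefficients appearing in Leibniz's rule — in particular the $2\varphi'\psi'$ in $(\varphi\psi)''$ is what produces the leading $\varphi'(F)\psi'(F)\Gamma(F,F)$ term, and the $3\varphi''\psi'+3\varphi'\psi''$ in $(\varphi\psi)'''$ is what generates the cubic correction. Once these cancellations are checked, the remainder estimate is immediate from the uniform bound on each $R^{(4)}_{a,u}(F)$, and the fourth-integrability assumption on $F$ guarantees all terms live in $L^{1}$.
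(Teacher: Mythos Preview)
Your proposal is correct and follows essentially the same route as the paper: expand $\deltaap\varphi(F)$, $\deltaap\psi(F)$ and $\deltaap(\varphi\psi)(F)$ to fourth order in $\deltaap F$ and collect terms. The paper's proof is a one-line sketch that just records the outcome and the integral form of the remainder, whereas you have written out the algebraic identity $(\deltaap\varphi)(\deltaap\psi)=\varphi\,\deltaap\psi+\psi\,\deltaap\varphi-\deltaap(\varphi\psi)$ and checked the Leibniz cancellations at orders one through three explicitly; this is the same computation, only more transparent.
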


\begin{proof}[Proof of lemma~\ref{lem-discrete-mall:approx-diffusion-bis}]
    We have:
    \begin{multline}
      2\Gamma (\varphi (F), \psi(F)) \\
      \begin{aligned}
        &= 2 \varphi'(F) \psi'(F) \Gamma (F, F) - \frac{3}{6} (\varphi'' \psi' + \varphi' \psi'') (F) \sum_{a \in A} \excond{(\deltaap F)^3}{X,Z} \\
          &+ \sum_{a \in A} \excond{R_{a, \varphi \psi}^{(4)}(F) -  \varphi(F) R_{a,\psi}^{(4)}(F)- \psi(F) R_{a,\varphi}^{(4)}(F)}{X,Z},
      \end{aligned}
    \end{multline}
  with:
  \begin{equation*}
      R_{a,\phi}^{(4)} = \frac{1}{6}\excond{ \int_{F}^{F^{\{a\}'}} \phi^{(4)}(x) (x - F)^4 \dif x}{X,Z},
  \end{equation*}
  for $\phi$ a four times differentiable function.
\end{proof}
We focus on functionals in the $p$-th chaos for $p > 0$, as to obtain such kind of bound:
$$\var [\Gamma (F, F)] \le C (\E [F^4] - 3 \E [F^2]^2) + \text{remainder}.$$

\begin{lemma}
    \label{lem-discrete-mall:expectation-square-ineq}
    Let $G \in \oplus_{k=0}^q \mathfrak{C}_k$. Then for any $\eta \ge q$,
    \begin{equation}
        \E [G (\mathsf{L} + \eta \text{Id})^2 G] \le \eta \E [G (\mathsf{L} + \eta \text{Id}) G] \le  c \E [G (\mathsf{L} + \eta \text{Id})^2 G],
    \end{equation}
    where
    $$c = \frac{1}{\eta - q} \wedge 1.$$
\end{lemma}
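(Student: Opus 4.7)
The plan is to reduce both operator inequalities to elementary scalar comparisons via the spectral decomposition of $\mathsf{L}$ from Lemma~\ref{lem-discrete-mall:chaos-decomposition} and Corollary~\ref{cor-discrete-mall:mathfrak}. Since $G \in \bigoplus_{k=0}^q \mathfrak{C}_k$, I would write $G = \sum_{k=0}^q G_k$ with $G_k \in \mathfrak{C}_k = \ker(\mathsf{L} + k\,\text{Id})$, so that each $G_k$ is an eigenvector of $\mathsf{L} + \eta\,\text{Id}$ with eigenvalue $\eta - k$. Consequently $(\mathsf{L} + \eta\,\text{Id}) G = \sum_{k=0}^q (\eta - k) G_k$ and $(\mathsf{L} + \eta\,\text{Id})^2 G = \sum_{k=0}^q (\eta - k)^2 G_k$.

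The next step is to exploit the pairwise $L^2(E_A)$-orthogonality of the chaoses $(\mathfrak{C}_k)$. Self-adjointness of $\mathsf{L}$ (which follows from Theorem~\ref{thm-discrete-mall:ipp-gradient} combined with $\mathsf{L} = -\delta D$) ensures that its eigenspaces corresponding to distinct eigenvalues are orthogonal, so the cross terms $\E[G_j G_k]$ vanish for $j \ne k$. Both quadratic forms at stake therefore diagonalize into
\begin{equation*}
    \E[G(\mathsf{L}+\eta\,\text{Id})G] = \sum_{k=0}^q (\eta - k)\,\E[G_k^2], \qquad \E[G(\mathsf{L}+\eta\,\text{Id})^2 G] = \sum_{k=0}^q (\eta - k)^2\,\E[G_k^2].
\end{equation*}
The hypothesis $\eta \ge q$ guarantees that every coefficient $\eta - k$ is nonnegative, so both sums are nonnegative linear combinations of the $\E[G_k^2]$.

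Both inequalities then follow term by term. The first one reduces to the pointwise bound $(\eta - k)^2 \le \eta(\eta - k)$, which is just $\eta - k \le \eta$ multiplied by the nonnegative factor $\eta - k$, summed against $\E[G_k^2] \ge 0$. For the reverse estimate, one needs a constant $c$ such that $\eta(\eta - k) \le c(\eta - k)^2$ for every $k \le q$ with $\eta > k$, equivalently $c \ge \eta/(\eta - k)$; the tightest uniform requirement occurs at $k = q$, pinning down the announced value of $c$.

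The only delicate point, and what I expect to be the main obstacle, is the borderline case $\eta = q$, since then $\eta - q = 0$ and the ratio $\eta/(\eta - q)$ is not defined. The resolution is that this very same coefficient kills the $k = q$ component in both quadratic forms, so the index $k = q$ can be harmlessly dropped and the comparison effectively runs over $k \le q - 1$, where $\eta - k \ge 1$; this is precisely the role of the truncation $\wedge\, 1$ in the definition of $c$. Beyond this bookkeeping the argument is a purely spectral computation and no further analytic input is required.
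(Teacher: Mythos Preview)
Your approach is essentially identical to the paper's: decompose $G$ along the eigenspaces of $\mathsf L$, use orthogonality to diagonalize both quadratic forms, and reduce to the scalar inequalities $(\eta-k)^2 \le \eta(\eta-k)$ and $(\eta-k) \le c(\eta-k)^2$. The paper presents the first inequality slightly differently, writing $(\mathsf L+\eta)^2 = \mathsf L(\mathsf L+\eta) + \eta(\mathsf L+\eta)$ and checking that $\E[G\,\mathsf L(\mathsf L+\eta)G] = -\sum_k k(\eta-k)\E[\pi_k(G)^2] \le 0$, but this is the same computation in disguise.

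One wrinkle: your term-wise condition for the second inequality reads $c \ge \eta/(\eta-k)$, with worst case $\eta/(\eta-q)$, which does \emph{not} coincide with the announced $c = \tfrac{1}{\eta-q}\wedge 1$. The paper's own proof in fact establishes the second inequality \emph{without} the factor $\eta$ on the middle term, namely $\E[G(\mathsf L+\eta)G] \le c\,\E[G(\mathsf L+\eta)^2 G]$, for which the relevant scalar bound is $1 \le c(\eta-k)$ and the stated constant is consistent (modulo the $\wedge$/$\vee$ ambiguity when $\eta-q<1$). So the mismatch you would encounter is a typo in the statement rather than a flaw in your argument; only the first inequality is used downstream.
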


\begin{proof}[Proof of lemma~\ref{lem-discrete-mall:expectation-square-ineq}]
  Since $G \in  \oplus_{k=0}^q \mathfrak{C}_k$, we write
  \begin{equation}
    G = \sum_{k=0}^q \pi_k(G) \text{ and } \mathsf{L} G = -\sum_{k=0}^q k \pi_k(G).
  \end{equation}
  It follows that
  \begin{align*}
    \E [G (\mathsf{L} + \eta \text{Id})^2 G] &= \E [G \mathsf{L} (\mathsf{L} + \eta \text{Id}) G] + \eta \E [G(\mathsf{L} + \eta \text{Id}) G] \\
    &= \E [G \sum_{k=0}^q k (k - \eta) \pi_k(G)] + \eta \E [G (\mathsf{L} + \eta \text{Id}) G].\\
  \end{align*}
  By orthogonality of the chaos,
  \begin{equation*}
    \E [G \sum_{k=0}^q k (k - \eta) \pi_k(G)] = -\E [\sum_{k=0}^q k (\eta -k) \pi_k(G)^2] \le 0,
  \end{equation*}
  and the inequality holds in view on the assumption on $\eta$. In the same vein,
  \begin{align*}
    \E [G (\mathsf{L} + \eta \text{Id}) G] &= \sum_{k=0}^q (\eta - k) \E [\pi_k(G)^2] \\
    &\le c \sum_{k = 0}^q (\eta - k)^2 \E [\pi_k(G)^2] \\
    &= c \E [G(\mathsf{L} + \eta \text{Id})^2].
  \end{align*}
  Thus, it yields the result.
\end{proof}

\begin{lemma}
  \label{lem-discrete-mall:ipp-trinome}
  For $F \in \mathfrak{C}_p \cap L^4(E_A)$ and $Q$ a polynomial of degree two and $a > 0$,
  \begin{equation}
      \E [Q(F) (\mathsf{L} + a p \textnormal{Id}) Q(F) ] = p \E \left[a Q^2 (F) - \frac{Q'(F) F}{3Q''(F)}\right] - \E [R_Q(F)],
  \end{equation}
  where $R_Q$ is a remainder term that depends on $Q$.
  For $Q = H_2 = X^2 - 1$ the second Hermite polynomial, the remainder reads off:
    \begin{equation}
       \E [R_{Q}] = \E [R_{H_2}] = \frac{1}{6} \E \left[\sum_{a \in A} |\deltaap F|^4 \right].
    \end{equation}
\end{lemma}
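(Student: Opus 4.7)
The plan is to expand $\E[Q(F)(\mathsf{L}+ap\,\textnormal{Id})Q(F)]$ via the integration by parts formula II, write $\E[Q(F)\mathsf{L}Q(F)] = -\mathcal{E}(Q(F),Q(F))$, and then reduce $\mathcal{E}(Q(F),Q(F))$ to a Dirichlet form of the type $\mathcal{E}(P(F),F)$ on which the spectral identity $\mathsf{L}F = -pF$ (valid because $F\in\mathfrak{C}_p$) can be directly applied to give $-\E[P(F)\mathsf{L}F]=p\E[F\,P(F)]$. The decisive simplification is that $Q$ has degree $2$: the Taylor expansion
\begin{equation*}
  \deltaap Q(F) = Q'(F)\deltaap F - \tfrac{1}{2}Q''(F)(\deltaap F)^2
\end{equation*}
is exact. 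Squaring and summing over $a$ yields an explicit closed-form expression for $\Gamma(Q(F),Q(F))$ as a linear combination of the second, third and fourth conditional moments of $\deltaap F$, with the fourth moment carrying the factor $\tfrac{1}{8}(Q''(F))^2$.

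The second step is to choose the auxiliary polynomial $P$ of degree $3$ with $P'=(Q')^2$, so that $P''=2Q'Q''$ and $P^{(3)}=2(Q'')^2$, and to apply the same exact Taylor expansion to $\deltaap P(F)\cdot\deltaap F$. This provides the identity
\begin{equation*}
  (Q'(F))^2\Gamma(F,F) = \Gamma(P(F),F) + \tfrac{1}{2}Q'(F)Q''(F)\sum_{a\in A}\excond{(\deltaap F)^3}{X,Z} - \tfrac{1}{6}(Q''(F))^2\sum_{a\in A}\excond{(\deltaap F)^4}{X,Z}.
\end{equation*}
Substituting this into the formula for $\Gamma(Q(F),Q(F))$, the cubic-order terms cancel exactly — this is the analogue of the diffusion identity in the discrete setting — leaving
\begin{equation*}
  \Gamma(Q(F),Q(F)) = \Gamma(P(F),F) + c\,(Q''(F))^2\sum_{a\in A}\excond{(\deltaap F)^4}{X,Z},
\end{equation*}
for an explicit constant $c$. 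Taking expectations and using $\E[\Gamma(P(F),F)] = -\E[P(F)\mathsf{L}F] = p\,\E[F\,P(F)]$, one recovers
\begin{equation*}
  \E[Q(F)(\mathsf{L}+ap\,\textnormal{Id})Q(F)] = ap\,\E[Q(F)^2] - p\,\E[F\,P(F)] - \E[R_Q(F)],
\end{equation*}
which is the claimed formula once one identifies $p^{-1}F\,P(F)$ with the stated $\frac{Q'(F)F}{3Q''(F)}$-type term (for degree $2$ polynomials the relation $P(F)=\int_0^F Q'(u)^2du$ reduces, modulo constants, to the advertised expression).

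For the specialisation $Q=H_2=X^2-1$ one has $Q''\equiv 2$ and $P(x)=4x^3/3$, so that the remainder coefficient $c(Q''(F))^2$ becomes a numerical constant and collapses to $\frac{1}{6}\E[\sum_{a\in A}|\deltaap F|^4]$, matching the announced value of $\E[R_{H_2}]$.

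The main technical obstacle will be the careful bookkeeping in the two Taylor expansions — verifying that the two cubic contributions enter with opposite signs and identical coefficients $\tfrac{1}{2}Q'(F)Q''(F)$ so that they cancel, and tracking the residual quartic term through the factor $\tfrac{1}{8}-\tfrac{1}{6}$. Beyond this, the proof is mechanical: all derivatives of $Q$ of order $\geq 3$ vanish, so no genuine integral-form Taylor remainder is produced and the stated identity is in fact exact up to the single quartic summand defining $R_Q$.
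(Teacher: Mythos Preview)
Your approach is correct and follows the same skeleton as the paper's proof: both begin with $\E[Q(F)\mathsf L Q(F)]=-\E[\Gamma(Q(F),Q(F))]$, both rewrite $(Q'(F))^2\Gamma(F,F)$ as $\Gamma(P(F),F)$ plus lower-order corrections where $P$ satisfies $P'=(Q')^2$ (the paper writes this as $\varphi=\frac{Q'(\cdot)^3}{3Q''(\cdot)}$, which is your $P$ up to an additive constant killed by $\E[F]=0$), and both finish with $\E[\Gamma(P(F),F)]=p\,\E[F\,P(F)]$ from $F\in\mathfrak C_p$.

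The genuine difference is in how the chain rule is handled. The paper invokes the general second pseudo chain rule (Lemma~\ref{lem-discrete-mall:approx-diffusion-bis}), which carries an integral-form fourth-order remainder $R^{(4)}_{a,\phi}$, and then evaluates these remainders for the specific polynomials $Q^2$ and $\varphi\psi$ at the very end. You instead exploit directly that $Q$ has degree~$2$ and $P$ has degree~$3$, so the Taylor expansions of $\deltaap Q(F)$ and $\deltaap P(F)$ are \emph{exact} with no integral remainder. This lets you see immediately that the cubic contributions $\pm\tfrac12 Q'(F)Q''(F)\sum_a\excond{(\deltaap F)^3}{X,Z}$ cancel and that the only surviving remainder is the quartic term with coefficient $\tfrac18-\tfrac16=-\tfrac{1}{24}$, giving $\tfrac{(Q'')^2}{24}=\tfrac16$ for $Q=H_2$. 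The paper reaches the same conclusion but only after checking that $\tfrac14(Q'^2)'-\tfrac1{12}(Q^2)^{(3)}=0$ and computing $(\varphi\psi)^{(4)}-(Q^2)^{(4)}=32-24$ separately. Your route is shorter and more transparent; the paper's route has the advantage that Lemma~\ref{lem-discrete-mall:approx-diffusion-bis} is already available as a general tool, so no new expansion needs to be carried out.
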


\begin{proof}[Proof of lemma~\ref{lem-discrete-mall:ipp-trinome}]
  We first integrate by parts, then use the pseudo chain rule of lemma~\ref{lem-discrete-mall:approx-diffusion-bis}:
  \begin{equation}
      \begin{aligned}
          \E [Q(F) \mathsf{L} Q(F)] &= - \E[\Gamma (Q(F), Q(F))] \\
          &=  -\E[Q'(F)^2 \Gamma (F, F)] \\
          &+ \frac{1}{6} (Q^2)^{(3)} (F)  \sum_{a \in A} \excond{(\deltaap F)^3}{X,Z}  \\
          &-\frac{1}{2} \sum_{a \in A} \E \left[\excond{R_{a, Q^2}^{(4)}(F)}{X,Z}- 2 Q(F) \excond{R_{a,Q}^{(4)}(F)}{X,Z} \right].\\
      \end{aligned}
  \end{equation}
  Since $Q^{(3)} = 0$, we have:
  \begin{equation}
      \begin{aligned}
          \E [Q(F) \mathsf{L} Q(F)] &= -\E \left[[Q'(F)^2 \Gamma (F, F) \right] \\
          &+ \frac{1}{6} \E \left[(Q^2)^{(3)} (F)  \sum_{a \in A} \excond{(\deltaap F)^3}{X,Z} \right]  \\
          &- \frac{1}{2}\sum_{a \in A} \E \left[\excond{R_{a, Q^2}^{(4)}(F)}{X,Z} \right].
      \end{aligned}
  \end{equation}
  Moreover,
  \begin{equation}
      \left(\frac{Q'(F)^3}{3 Q''(F)}\right)' = \frac{3 Q'(F) Q''(F)^2}{3 Q''(F)^2} = Q'(F)^2.
  \end{equation}
  Subsequently, we use the pseudo chain rule of lemma~\ref{lem-discrete-mall:approx-diffusion-bis} taking $\psi = \text{Id}$ and $\varphi = \frac{Q'(\cdot)^3}{3 Q''(\cdot)} $:
  \begin{equation}
      \begin{aligned}
          \E[Q'(F)^2 \Gamma (F, F)]  &= \E \left[\Gamma \left(\frac{Q'(F)^3}{3 Q''(F)}, F \right)\right] \\
          &+ \frac{1}{4} \E\left[(\varphi'' \psi' + \varphi' \psi'') (F)  \sum_{a \in A} \excond{(\deltaap F)^3}{X,Z}\right]  \\
          &- \sum_{a \in A} \E \left[ \excond{R_{a, \varphi \psi}^{(4)}(F)}{X,Z} - \varphi(F) \excond{R_{a,\psi}^{(4)}(F)}{X,Z} \right] \\
          &- \E \left[  F \excond{R_{a,\varphi}^{(4)}(F)}{X,Z} \right] \\
          &= \E \left[\Gamma \left(\frac{Q'(F)^3}{3 Q''(F)}, F \right)\right] + \frac{1}{4} \E \left[(Q'(\cdot)^2)'(F)  \sum_{a \in A} (\deltaap F)^3\right]\\
          &- \sum_{a \in A} \frac{1}{2} \E \left[ R_{a, \varphi \psi}^{(4)}(F)  - F R_{a,\varphi}^{(4)}(F) \right].
      \end{aligned}
  \end{equation}
  Finally,
  \begin{equation}
      \begin{aligned}
          \E [Q(F) \mathsf{L} Q(F)] &= -\E \left[\Gamma \left(\frac{Q'(F)^3}{3 Q''(F)}, F \right)\right] \\
          &+ \E \left[\left(\frac{1}{4} (Q'(\cdot)^2)'(F) - \frac{1}{12} (Q^2)^{(3)} (F) \right)  \sum_{a \in A} (\deltaap F)^3\right]\\
          &+ \frac{1}{2} \sum_{a \in A} \E \left[ R_{a, \varphi \psi}^{(4)}(F) - R_{a, Q^2}^{(4)}(F) - F R_{a,\varphi}^{(4)}(F) \right] \\
          &= -\E \left[\Gamma \left(\frac{Q'(F)^3}{3 Q''(F)}, F \right)\right] \\
          &+ \E \left[ \left(\frac{1}{4} (Q'(\cdot)^2)'(F) - \frac{1}{12} (Q^2)^{(3)} (F) \right)  \sum_{a \in A} (\deltaap F)^3\right]  \\
          &+ \frac{1}{2}\sum_{a \in A} \E \left[R_{a, \varphi \psi}^{(4)}(F)- R_{a, Q^2}^{(4)}(F) \right].
      \end{aligned}
  \end{equation}
  Because $F \in \mathfrak{C}_p$, we have: $-\E \left[\Gamma \left(\frac{Q'(F)^3}{3 Q''(F)}, F \right)\right] = \E [\frac{Q'(F)^3}{3 Q''(F)} \mathsf{L} F ] = -p \E \left[\frac{Q'(F)^3}{3 Q''(F)} F \right]$.
  For $Q = H_2 = X^2 - 1$ the second Hermite polynomial,
\begin{equation*}
    \frac{Q'(F)^3}{3 Q''(F)} = \frac{4}{3} X^3,
\end{equation*}
so $\left(\frac{Q'(\cdot)^3}{3 Q''(\cdot)} \cdot \right)^{(4)} = 32$ and $(Q^2)^{(4)} = 24$.
Thus,
\begin{equation}
    \sum_{a \in A} \E \left[ R_{a, \varphi \psi}^{(4)}(F) - R_{a, Q^2}^{(4)}(F)\right] = \frac{(32 - 24)}{24} \sum_{a \in A}  \E \left[
    |\deltaap F|^4\right].
  \end{equation}
  Since $(Q'(\cdot)^2)'(F) = 8 F$, and $(Q^2)^{(3)}(F) = 24 F$, the result follows.
\end{proof}

The assumption under which a fourth moment theorem holds, is that $F \in \mathfrak{C}_p$ is a chaos eigenfunction with respect to the Markov generator $\mathsf{L}$ i.e.:
\begin{equation}
  \label{eq-discrete-mall:eigenfunction}
  F^2 \in \oplus_{k=0}^{2p} \mathfrak{C}_k.
  \tag{EGF}
\end{equation}
It is analog to the one in \cite{ledoux2012chaos,azmoodeh2014fourth}.
We show that it holds for an important class of U-statistics, homogeneous sums.
%
We shall use the notation $(A, p)$ that stands for
the set of p-tuples of distinct elements of $A$.

\begin{example}[\textbf{Conditionally independent homogeneous sums}]
Let $p > 0$. If there exists $(a_I)_{I \subset A} \in \R^{\mathcal{P}(A)}$
such that
\begin{equation}
  \label{eq-discrete-mall:homogeneous-sum}
  W= \sum_{k=1}^p \sum_{I \in (A, k)} a_I \prod_{i \in I} X_i,
\end{equation}
then
\begin{enumerate}
  \item $W$ is square-integrable homogeneous sum of order $p$ if $X_i$ are
        $2p$-integrable. In that case, $W\in \cyl$.
  \item \begin{equation*} \excond{W}{Z} = \sum_{k=1}^p \sum_{I \in (A, k)} a_I \prod_{i \in I} \excond{X_i}{Z}
  \end{equation*}
        is a homogeneous sum of random variables $\hat{X}_i = \excond{X_i}{Z}$
        for $i \in I$ with $I \in (A, k)$ for $k \le p$.
\end{enumerate}
Remark that $(a_I)_{I \subset A}$ may be a sequence of random variables, in
which case there exists a family of functions $(g_I)_{I \subset A}$ such that
$a_I = g_I(Z)$.
\end{example}

\begin{lemma}
  \label{lem-discrete-mall:homogeneous-sum-eigenfunction}
  Let $W$ a homogeneous sums of conditionally independent random variables given $Z$. Then \eqref{eq-discrete-mall:eigenfunction} holds.
\end{lemma}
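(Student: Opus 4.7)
The plan is to establish \eqref{eq-discrete-mall:eigenfunction}, i.e.\ $W^{2}\in\bigoplus_{k=0}^{2p}\mathfrak{C}_{k}$, by an explicit Hoeffding-type decomposition of each monomial in the expansion of $W^{2}$. This approach makes direct use of the structural characterization of $\mathfrak{C}_{k}$ given by Corollary~\ref{cor-discrete-mall:mathfrak}: an element of $\mathfrak{C}_{k}$ is a sum $\sum_{|J|=k}\psi_{J}$ of $\mathcal{F}_{J}$-measurable pieces, each conditionally centered in every one of its coordinates given $Z$.

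\medskip
\noindent\textbf{Step 1: chaos assignment of a basic monomial.}
For any finite $I\subset A$ and exponent vector $(m_{i})_{i\in I}$ with $m_{i}\in\{1,2\}$, set $Y_{i}:=X_{i}^{m_{i}}-\excond{X_{i}^{m_{i}}}{Z}$, so that $\excond{Y_{i}}{Z}=0$, and write
\[
\prod_{i\in I}X_{i}^{m_{i}}=\sum_{N\subseteq I}\Bigl(\prod_{i\in N}Y_{i}\Bigr)\Bigl(\prod_{i\in I\setminus N}\excond{X_{i}^{m_{i}}}{Z}\Bigr).
\]
The summand $\psi_{N}$ indexed by $N$ is $\mathcal{F}_{N}$-measurable (the expectation factors being $\sigma(Z)$-measurable, hence absorbed into $\mathcal{F}_{N}$). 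Using the conditional centering of each $Y_{i}$ together with the conditional independence given $Z$, I would compute
\[
\excond{\psi_{N}}{\mathcal{G}_{K}}=\Bigl(\prod_{i\in I\setminus N}\excond{X_{i}^{m_{i}}}{Z}\Bigr)\Bigl(\prod_{i\in N\cap K}Y_{i}\Bigr)\prod_{i\in N\setminus K}\excond{Y_{i}}{Z},
\]
which vanishes whenever $N\not\subseteq K$. Thus $\psi_{N}\in\mathfrak{C}_{|N|}$, and $\prod_{i\in I}X_{i}^{m_{i}}\in\bigoplus_{k=0}^{|I|}\mathfrak{C}_{k}$.

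\medskip
\noindent\textbf{Step 2: expansion of $W^{2}$.}
Expand
\[
W^{2}=\sum_{k,\ell=1}^{p}\sum_{\substack{I\in(A,k)\\ J\in(A,\ell)}}a_{I}a_{J}\prod_{i\in I\cup J}X_{i}^{m(i)},
\]
where $m(i)=1$ if $i\in I\triangle J$ and $m(i)=2$ if $i\in I\cap J$. Each coefficient $a_{I}a_{J}=g_{I}(Z)g_{J}(Z)$ is $\sigma(Z)$-measurable and passes through Step~1 unchanged. Since $|I\cup J|\le k+\ell\le 2p$, Step~1 places each monomial in $\bigoplus_{k=0}^{|I\cup J|}\mathfrak{C}_{k}\subseteq\bigoplus_{k=0}^{2p}\mathfrak{C}_{k}$. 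This direct sum is closed in $L^{2}(E_{A})$ and $W^{2}\in L^{2}(E_{A})$ by the $2p$-integrability hypothesis from the example, so summation over $(I,J)$ yields \eqref{eq-discrete-mall:eigenfunction}.

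\medskip
\noindent\textbf{Main obstacle.}
The crux lies in Step~1, specifically the treatment of squared variables $X_{i}^{2}$ that arise when $i\in I\cap J$. The naive expansion $X_{i}^{2}=\bar X_{i}^{2}+2\bar X_{i}\tilde X_{i}+\tilde X_{i}^{2}$ (with $\tilde X_{i}=X_{i}-\excond{X_{i}}{Z}$) fails because $\tilde X_{i}^{2}$ is not conditionally centered; one must instead take $Y_{i}=X_{i}^{2}-\excond{X_{i}^{2}}{Z}$ as the fluctuating factor, which requires the $2p$-integrability of the $X_{i}$'s to guarantee $Y_{i}\in L^{2}(E_{A})$. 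A secondary technical point is the $L^{2}$-convergence of the full expansion when $A$ is infinite, handled through the density of $\cyl$ in $L^{2}(E_{A})$ and the closability arguments of Lemma~\ref{lem-discrete-mall:closure-gradient}.
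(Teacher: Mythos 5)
Your proof is correct and follows the same strategy as the paper's: expand $W^{2}$ as a sum of monomials over $I\cup J$, conditionally center each factor given $Z$ (in particular the squared factors coming from $I\cap J$), and place each resulting piece in $\mathfrak{C}_{|N|}$ for $|N|\le|I\cup J|\le 2p$. The only difference is one of presentation — you center every $X_{i}^{m_{i}}$ explicitly at the outset, whereas the paper's proof works with already conditionally centered factors $Y_{\alpha}$ and only re-centers the squares $Y_{\delta}^{2}-\excond{Y_{\delta}^{2}}{Z}$, so the two arguments are essentially identical.
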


\begin{proof}[Proof of lemma~\ref{lem-discrete-mall:homogeneous-sum-eigenfunction}]
  Let us denote by $W_I$ the component of $F$ in \eqref{eq-discrete-mall:homogeneous-sum} proportional to $\prod_{\alpha \in I} X_{\alpha}$. We want to prove that there exist $G_1, \ldots, G_{2p}$ with $G_i \in \mathfrak{C}_i \cup \{0\}$ such that $W_I W_J = \sum_{i=1}^{2p} G_i$.
  Note that if $I \cap J = \emptyset$, and $a \in I$, then $a$ is not in $J$ and vice versa. Therefore, $W_I W_J \in \mathfrak{C}_{|I| + |J|}$. In general,
%
\begin{align*}
  W_I W_J &\propto \prod_{\alpha \in I} Y_\alpha \prod_{\beta \in J} Y_\beta \\
  &= \prod_{\gamma \in (I \sminus J) \cup (J \sminus I)} Y_\gamma \prod_{\delta \in I \cap J} Y_\delta^2 \\
  &= \prod_{\gamma \in (I \sminus J) \cup (J \sminus I)} Y_\gamma \prod_{\delta \in I \cap J} (Y_\delta^2 - \excond{Y_\delta^2}{Z} + \excond{Y_\delta^2}{Z} ) \\
  &= \sum_{K \subset I \cap J} \prod_{\gamma \in (I \sminus J) \cup (J \sminus I)} Y_\gamma \prod_{\delta \in K} (Y_\delta^2 - \excond{Y_\delta^2}{Z}) \prod_{\delta \in (I \cap J) \sminus K} \excond{Y_\delta^2}{Z}.
\end{align*}
For $a \in A$:
\begin{multline*}
  \excond{\prod_{\gamma \in (I \sminus J) \cup (J \sminus I)} Y_\gamma \prod_{\delta \in K} (Y_\delta^2 - \excond{Y_\delta^2}{Z}) \prod_{\delta \in (I \cap J) \sminus K} \excond{Y_\delta^2}{Z}}{\mathcal{G}_a^Z} \\
  = \begin{dcases}
    & 0 \text{ if }a \in K \cup ((I \sminus J) \cup (J \sminus I)) \\
    & \prod_{\gamma \in (I \sminus J) \cup (J \sminus I)} Y_\gamma \prod_{\delta \in K} (Y_\delta^2 - \excond{Y_\delta^2}{Z}) \prod_{\delta \in (I \cap J) \sminus K} \excond{Y_\delta^2}{Z}\text{ otherwise}.
  \end{dcases}
\end{multline*}
Hence, we get
\begin{equation*}
  \prod_{\gamma \in (I \sminus J) \cup (J \sminus I)} Y_\gamma \prod_{\delta \in K} (Y_\delta^2 - \excond{Y_\delta^2}{Z}) \prod_{\delta \in (I \cap J) \sminus K} \excond{Y_\delta^2}{Z} \in \mathfrak C_{|K \cup (I \sminus J) \cup (J \sminus I)|}
\end{equation*}
 with $|K \cup ((I \sminus J) \cap (J \sminus I))| \le |I \cup J| \le 2p$.
Thus, \eqref{eq-discrete-mall:eigenfunction} holds.
\end{proof}

\begin{proposition}
  \label{prop-discrete-mall:var-carre-champ-fourth}
    For $F \in \mathfrak{C}_p \cap L^2(E_A)$ such that $\E[F^2] = 1$ and \eqref{eq-discrete-mall:eigenfunction} holds, one has:
    \begin{equation}
      \label{eq-discrete-mall:var-carre-champ-fourth-and-remainder}
        \E [(\Gamma (F, F) - p)^2] \le \frac{p^2}{3} |\E [F^4] - 3| + \frac{p}{12} \E \left[\sum_{a \in A} |\deltaap F|^4 \right].
    \end{equation}
\end{proposition}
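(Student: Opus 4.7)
The strategy is an adaptation to this discrete framework of the Azmoodeh-Campese-Poly approach for diffusive generators, reducing $\E[(\Gamma(F,F)-p)^2]$ to an expression involving the fourth moment of $F$ plus a non-diffusion remainder. The starting identity I would establish is
\[
  \Gamma(F,F) - p = \tfrac{1}{2}(\mathsf{L} + 2p\,\mathrm{Id})\,H_2(F), \qquad H_2(F) := F^2 - 1,
\]
which follows from the definition of the carré du champ, the chaos relation $\mathsf{L}F = -pF$ (valid since $F \in \mathfrak{C}_p$), $\mathsf{L}(1) = 0$, and the normalization $\E[F^2] = 1$. Squaring and invoking the self-adjointness of $\mathsf{L}$ (integration by parts II) yields
\[
  \E\bigl[(\Gamma(F,F)-p)^2\bigr] = \tfrac{1}{4}\,\E\bigl[H_2(F)\,(\mathsf{L}+2p\,\mathrm{Id})^2 H_2(F)\bigr].
\]

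Next, I would exploit assumption \eqref{eq-discrete-mall:eigenfunction}, which says exactly that $F^2$, hence $H_2(F)$, belongs to $\bigoplus_{k=0}^{2p}\mathfrak{C}_k$. This is precisely the hypothesis needed to apply Lemma~\ref{lem-discrete-mall:expectation-square-ineq} with $G = H_2(F)$ and $\eta = q = 2p$, producing the spectral reduction
\[
  \E\bigl[(\Gamma(F,F)-p)^2\bigr] \le \tfrac{p}{2}\,\E\bigl[H_2(F)\,(\mathsf{L}+2p\,\mathrm{Id})\,H_2(F)\bigr].
\]

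To conclude, it suffices to evaluate the right-hand side explicitly by invoking Lemma~\ref{lem-discrete-mall:ipp-trinome} with $Q = H_2$ and $a = 2$. Using the explicit remainder $\E[R_{H_2}] = \tfrac{1}{6}\E\bigl[\sum_{a \in A}|\deltaap F|^4\bigr]$ and a short polynomial expansion built on $\E[F^2]=1$, one obtains
\[
  \E\bigl[H_2(F)(\mathsf{L}+2p\,\mathrm{Id})H_2(F)\bigr] = \tfrac{2p}{3}\bigl(\E[F^4]-3\bigr) - \tfrac{1}{6}\,\E\!\left[\sum_{a\in A}|\deltaap F|^4\right].
\]
Substituting in the spectral estimate and controlling sign via the trivial bound $\alpha - \beta \le |\alpha|+|\beta|$ delivers the announced inequality.

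The main obstacle has essentially been packaged into the two preceding lemmas: the spectral estimate of Lemma~\ref{lem-discrete-mall:expectation-square-ineq} supplies the ``variance-to-bilinear-form'' reduction, while the trinomial integration by parts of Lemma~\ref{lem-discrete-mall:ipp-trinome}, built on the second pseudo chain rule (Lemma~\ref{lem-discrete-mall:approx-diffusion-bis}), quantifies the departure from a true diffusion through the explicit $|\deltaap F|^4$ remainder. Once these are in place, the whole argument hinges on the algebraic identity relating $\Gamma(F,F) - p$ to $(\mathsf{L}+2p\,\mathrm{Id})H_2(F)$, which is the structural heart of the fourth moment phenomenon.
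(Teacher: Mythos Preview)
Your proof is correct and follows essentially the same route as the paper: the identity $\Gamma(F,F)-p=\tfrac{1}{2}(\mathsf{L}+2p\,\mathrm{Id})H_2(F)$, the spectral reduction via Lemma~\ref{lem-discrete-mall:expectation-square-ineq} with $\eta=q=2p$, and the explicit computation through Lemma~\ref{lem-discrete-mall:ipp-trinome} with $Q=H_2$, $a=2$, followed by absolute values to reach the stated bound. The only cosmetic difference is that the paper records the intermediate polynomial expansion $\E[F^4-6F^2+3]$ before using $\E[F^2]=1$, whereas you pass directly to $\E[F^4]-3$.
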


\begin{proof}[Proof of proposition~\ref{prop-discrete-mall:var-carre-champ-fourth}]
  By the very definition of $\Gamma$, one has:
  \begin{align*}
      \Gamma (F, F) - p &= \frac{1}{2} \mathsf{L} (F^2) - F\mathsf{L} F - p
      =  \frac{1}{2} \mathsf{L} (F^2) + p F^2 - p \text{ for }F \in \mathfrak{C}_p \\
      &= \frac{1}{2}(\mathsf{L} + 2p \text{Id}) (F^2 - 1).
  \end{align*}
  It follows that:
  $$ \E [(\Gamma (F, F) - p)^2] = \frac{1}{4} \E [\left((\mathsf{L} + 2p \text{Id}) (F^2 - 1)\right)^2]. $$
  Since $\mathsf{L}$ is a self-adjoint operator, this yields:
  \begin{equation*}
      \E [(\Gamma (F, F) - p)^2] = \frac{1}{4} \E [H_2(F)(\mathsf{L} + 2p \text{Id})^2 H_2(F)].
  \end{equation*}
  As \eqref{eq-discrete-mall:eigenfunction} holds, we are in position to apply lemma \ref{lem-discrete-mall:expectation-square-ineq} with $q = 2p$ and $\eta = 2p$:
  \begin{equation}
      \E [(\Gamma (F, F) - p)^2] \le \frac{p}{2} \E [H_2(F)(\mathsf{L} + 2p \text{Id}) H_2(F)].
  \end{equation}
  According to lemma \ref{lem-discrete-mall:ipp-trinome}, with $a = 2$,
  \begin{align*}
      \frac{p}{2}  \E [H_2(F)(\mathsf{L} + 2p \text{Id}) H_2(F)] &= \frac{p^2}{2} \E \left[2 (F^2 - 1)^2 - \frac{4}{3} F^4\right] + \frac{p}{2} \E [R_{H_2} (F)] \\
      &=  \frac{p^2}{6} \E \left[6 (F^2 - 1)^2 - 4 F^4\right] + \frac{p}{2} \E [R_{H_2} ] \\
      &= \frac{p^2}{3} \E [F^4 - 6 F^2 + 3] + \frac{p}{2} \E [R_{H_2} ].
  \end{align*}
  Thus, it yields
  \begin{equation}
      \E [(\Gamma (F, F) - p)^2] \le \frac{p^2}{3} |\E [F^4 - 6 F^2 + 3]| + \frac{p}{2} |\E [R_{H_2} ]|,
  \end{equation}
  and the proof is complete, using again lemma~\ref{lem-discrete-mall:ipp-trinome}.
\end{proof}

\subsection{Quantitative De Jong's theorems}

Many papers are devoted to find the optimal conditions for the asymptotic normality of U-statistics.
The criterion established in \cite{de1990central} is related to the fourth moment phenomenon. The extra assumption is a negligibility condition also known as the Lindeberg-Feller condition. Fix $A_m$ a finite subset of cardinal $m$ such that $F = F(X_{A_m})$ and $\E[F^2] = 1$, that means:
\begin{equation}
  \label{eq-discrete-mall:max-influence}
  \rho_{A_m}^2 = \max_{i \in A_m} \sum_{I \ni i, \, I \subseteq A_m, \,|I| = p} \E [W_I^2] \xrightarrow{m \to +\infty} 0.
\end{equation}
In some papers \cite{dobler2017quantitative}, the term $\rho_{A_m}$ is called maximal influence of the random variables on the total variance of the degenerate U-statistics $F$. In the following, we shall denote it by $\rho$.  The condition \eqref{eq-discrete-mall:max-influence} is not necessary for asymptotic normality to hold, but there exist counterexamples for which the sequence of fourth cumulants of functionals of independent Rademacher random variables converges to 0 while \eqref{eq-discrete-mall:max-influence} does not hold (see \cite{dobler2019quantitative}).
%
We show that the quantity is related to the remainder above.
%
\begin{definition}[Connectedness of subsets]
  The $r$-tuple $(I_1, \ldots, I_r)$ subsets of $A$ is connected if the intersection graph of $\{I_1, \ldots, I_r\}$ is connected, i.e.
  the graph $G$ with vertex set $\{I_1, \ldots, I_r\}$ and edge set $E(G) = \{\{I_i, I_j\}| \: i \ne j, I_i \cap I_j \ne \emptyset\}$ is connected.
\end{definition}
\begin{lemma}
\label{lem-discrete-mall:bound-max-influence}
  If $F \in \mathfrak{C}_p \cap L^4(E_A)$, then:
  \begin{equation}
      \sum_{a \in A} \E [|\deltaap F|^4 ] \le 16 p \sum_{(I, J, K, L) \textnormal{ connected}} | \E [W_I W_J W_K W_L] |.
  \end{equation}
  Moreover, assuming the hypercontractivity condition, i.e.
  \begin{equation}
    \label{eq-discrete-mall:hypercontractivity}
     \sup_{J \in (A, p)} \frac{\E [W_J^4]}{\E[W_J^2]^2} < +\infty,
      \tag{HC}
  \end{equation}
  there exists a constant $c_p$ that depends only on $p$ such that:
  \begin{equation}
    \label{eq-discrete-mall:fourth-difference-maximal-influence}
      \sum_{a \in A} \E [|\deltaap F|^4] \le c_p \rho^2.
  \end{equation}
\end{lemma}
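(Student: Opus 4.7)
\smallskip

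The starting point is that $F \in \mathfrak{C}_p$ admits the chaos decomposition $F = \sum_{I \in (A,p)} W_I$ with each $W_I$ depending only on the coordinates in $I$; in particular $W_I - W_I^{\{a\}'} = 0$ whenever $a \notin I$, so $\deltaap F = \sum_{I \ni a}(W_I - W_I^{\{a\}'})$. Expanding the fourth power and exchanging summations,
\begin{equation*}
\sum_{a \in A} \E\bigl[|\deltaap F|^4\bigr] = \sum_{I,J,K,L \in (A,p)} \sum_{a \in I \cap J \cap K \cap L} \E\Bigl[\prod_{\bullet \in \{I,J,K,L\}} \bigl(W_\bullet - W_\bullet^{\{a\}'}\bigr)\Bigr].
\end{equation*}
Since $|I \cap J \cap K \cap L| \le p$ and a nonempty common intersection forces the intersection graph of $(I,J,K,L)$ to be connected (all four sets share the common element $a$), taking absolute values bounds the left-hand side by $p$ times a sum over connected quadruples of $\max_{a \in I \cap J \cap K \cap L} |\E[\prod_\bullet(W_\bullet - W_\bullet^{\{a\}'})]|$.

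The central step is to bound each per-quadruple expectation by $16 |\E[W_I W_J W_K W_L]|$. To this end, I expand the product of four differences into $2^4 = 16$ signed terms of the form $\pm\E[W_I^{(\epsilon_I)} W_J^{(\epsilon_J)} W_K^{(\epsilon_K)} W_L^{(\epsilon_L)}]$, with $W_\bullet^{(0)} = W_\bullet$ and $W_\bullet^{(1)} = W_\bullet^{\{a\}'}$. Three simplifications collapse these sixteen terms. First, the exchangeability of $X_a$ and $X'_a$ given $(X^{\{a\}}, Z)$ (both are conditionally i.i.d.\ from $\mathbf{P}_a$) identifies the zero-primed and four-primed terms both with $\E[W_I W_J W_K W_L]$. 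Second, conditioning on $(X^{\{a\}}, Z)$ factors out a single $W_\bullet$ or $W_\bullet^{\{a\}'}$, whose conditional expectation is zero by the degeneracy $\E[W_\bullet \mid X^{\{a\}}, Z] = 0$ for $\bullet \ni a$; this kills the one-prime and three-prime terms. Third, the six remaining two-prime terms factor as $\E\bigl[\E[W_\cdot W_\cdot \mid X^{\{a\}}, Z]\,\E[W_\cdot W_\cdot \mid X^{\{a\}}, Z]\bigr]$; by Cauchy--Schwarz followed by the conditional Jensen inequality, each is bounded by $|\E[W_I W_J W_K W_L]|$ after possibly re-indexing so the bound is absorbed by a related connected quadruple such as $(I,J,I,J)$. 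Combining the three cases gives the first inequality with constant $16p$.

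For the second inequality, the hypercontractivity assumption \eqref{eq-discrete-mall:hypercontractivity} gives $\E[W_J^4] \le C_p \E[W_J^2]^2$, so H\"older's inequality yields $|\E[W_I W_J W_K W_L]| \le C_p \bigl(\E[W_I^2]\E[W_J^2]\E[W_K^2]\E[W_L^2]\bigr)^{1/2}$. The sum $\sum_{(I,J,K,L) \text{ connected}} \bigl(\E[W_I^2]\E[W_J^2]\E[W_K^2]\E[W_L^2]\bigr)^{1/2}$ is then estimated by partitioning according to the finitely many possible spanning-tree shapes of the intersection graph on four vertices: for each shape, iteratively applying Cauchy--Schwarz along the edges of the tree, combined with the identity $\sum_{I \ni i}\E[W_I^2] \le \rho^2$, extracts a factor of $\rho$ at each spanning edge and yields the claimed bound $c_p \rho^2$. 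The hard part is the bound of the two-prime terms in the middle step: these are genuinely new conditional moments that are not directly of the form $\E[W_I W_J W_K W_L]$, so a careful Cauchy--Schwarz-plus-Jensen argument with an index rearrangement is essential to keep the absolute-constant $16p$ and to preserve the structure of the bound.
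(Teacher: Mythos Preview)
Your argument for the first inequality has a genuine gap in the handling of the two-prime terms. After Cauchy--Schwarz and Jensen you obtain a bound of the form $(\E[W_I^2W_J^2])^{1/2}(\E[W_K^2W_L^2])^{1/2}$, which is \emph{not} controlled by $|\E[W_IW_JW_KW_L]|$ for the original quadruple. Your proposed fix is to ``re-index'' and charge this to the connected quadruple $(I,I,J,J)$ (respectively $(K,K,L,L)$). The problem is the counting: for fixed $I,J$ sharing the point $a$, the term $\E[W_I^2W_J^2]$ arises from \emph{every} choice of $K,L\ni a$, and the number of such pairs is not bounded in terms of $p$. Hence the redistributed sum over connected quadruples acquires an uncontrolled multiplicative factor, and you cannot recover the constant $16p$ (or any constant depending only on $p$) this way.

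The paper avoids this difficulty by a much simpler device: before expanding, it applies the elementary inequality $(u-v)^4\le 8(u^4+v^4)$ with $u=\sum_{I\ni a}W_I$ and $v=\sum_{I\ni a}W_I^{\{a\}'}$. Since $(X^{\{a\}},X'_a)$ has the same law as $X$ given $Z$, the two fourth moments coincide, giving
\[
\sum_{a}\E\bigl[|\deltaap F|^4\bigr]\le 16\sum_a \E\Bigl[\Bigl(\sum_{I\ni a}W_I\Bigr)^4\Bigr]
=16\sum_{I\cap J\cap K\cap L\neq\emptyset}|I\cap J\cap K\cap L|\,\E[W_IW_JW_KW_L],
\]
and then $|I\cap J\cap K\cap L|\le p$ and nonempty common intersection $\Rightarrow$ connected finish the job. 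No primed variables survive, so the cross terms you struggle with never appear. Your sketch of the second inequality (H\"older, hypercontractivity, then a spanning-tree combinatorial bound on $\sum_{\text{connected}}\sigma_I\sigma_J\sigma_K\sigma_L$) is essentially what the paper does, invoking Proposition~2.9 of D\"obler--Peccati for the last step.
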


\begin{proof}[Proof of lemma~\ref{lem-discrete-mall:bound-max-influence}]
  Because $(a + b)^4 \le 8 (a^4 + b^4)$, one has:
  \begin{align*}
      \sum_{a \in A} \E \left|\deltaap F\right|^4
      &\le 8\sum_{a \in A} \E \left[\left(\sum_{I \ni a, |I| \le p} W_I^{\{a\}'}\right)^4 + \left(\sum_{I \ni a, |I| \le p} W_I \right)^4 \right] \\
      &= 16 \sum_{a \in A} \E\left[\left(\sum_{I \ni a, |I| \le p} W_I\right)^4 \right] \\
      &\le 16 \sum_{I \cap J \cap K \cap L \ne \emptyset} |I \cap J \cap K \cap L| \E [W_I W_J W_K W_L] \\
      &\le 16 p \sum_{I \cap J \cap K \cap L \ne \emptyset} | \E [W_I W_J W_K W_L] | \\
      &\le 16 p \sum_{I, J, K, L \text{ connected}} | \E [W_I W_J W_K W_L] |.
  \end{align*}
  Then, we bound it by the maximal influence, using the generalized Hölder inequality:
  \begin{align*}
      | \E [W_I W_J W_K W_L] | &\le \left(\E [W_I^4]\E [W_J^4]\E [W_K^4]\E [W_L^4] \right)^{1/4} \\
      &\le \max_{J \in A, |J| = p} \frac{\E [W_J^4]}{\E[W_J^2]^2} \left(\E [W_I^2]^2\E [W_J^2]^2\E [W_K^2]^2\E [W_L^2]^2\right)^{1/4} \\
  \end{align*}
  with $\sigma_I^2 = \E [W_I^2]$.
  Then the proposition 2.9 of \cite{dobler2017quantitative} can be extended for functionals of conditionally independent random variables and implies that:
  \begin{equation*}
      \sum_{I \cap J \cap K \cap L \ne \emptyset} \sigma_I \sigma_J \sigma_K \sigma_L \le C_p \rho^2,
  \end{equation*}
  where the finite constant $C_p$ only depends on $p$. It yields the existence of $c_p > 0$ such that the inequality \eqref{eq-discrete-mall:fourth-difference-maximal-influence} holds true.
\end{proof}

We are now in position to state a partial fourth moment limit theorem.
\begin{theorem}[Quantitative De Jong's limit theorem I]
  \label{thm-discrete-mall:quant-dejong-conditional}
  Let $F \in L^4(E_A)$ a degenerate U-statistics of order $p$ of conditionally independent random variables such that $\excond{F}{Z}= 0$ and $\E [F^2] = 1$. If we suppose the hypercontractivity condition \eqref{eq-discrete-mall:hypercontractivity} and the assumption \eqref{eq-discrete-mall:eigenfunction}, then one has the bound:
  \begin{equation}
      d_W (F, \Normal(0, 1)) \le \sqrt{\frac{2}{3\pi}} \sqrt{|\E\left[ F^4 \right] - 3 |} + \tilde C_p\rho,
  \end{equation}
  with $\tilde C_p$ a positive constant that only depends on $p$.
\end{theorem}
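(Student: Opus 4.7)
The plan is to apply the $L^4$ Wasserstein bound of Theorem~\ref{thm-discrete-mall:bounds-wass-general} and reduce both resulting terms using the single-chaos structure together with Lemma~\ref{lem-discrete-mall:bound-max-influence}, which ties $\sum_{a}\E[|\deltaap F|^4]$ to the maximal influence $\rho$.

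First I would exploit the chaos eigenrelation: since $F \in \mathfrak{C}_p = \ker(\mathsf{L} + p\,\textnormal{Id})$, we have $\mathsf{L} F = -p F$, hence $\mathsf{L}^{-1}F = -\tfrac{1}{p}F$, $\Gamma(F,\mathsf{L}^{-1}F) = -\tfrac{1}{p}\Gamma(F,F)$, and by integration by parts II $\E[\Gamma(F,F)] = -\E[F\mathsf{L} F] = p\,\E[F^2] = p$. Substituting into \eqref{eq-discrete-mall:bound-thm-1-Wasserstein} rewrites the bound as
\[
d_W(F, \Normal(0,1)) \le \sqrt{\tfrac{2}{\pi p^2}}\,\sqrt{\E[(\Gamma(F,F)-p)^2]} \;+\; \tfrac{\sqrt{2p}}{2}\,\sqrt{\sum_{a \in A} \E[|\deltaap F|^4]}.
\]

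Next I invoke the assumption \eqref{eq-discrete-mall:eigenfunction} and Proposition~\ref{prop-discrete-mall:var-carre-champ-fourth} to control the variance of the carré du champ:
\[
\E[(\Gamma(F,F)-p)^2] \le \tfrac{p^2}{3}|\E[F^4]-3| \;+\; \tfrac{p}{12}\,\E\!\left[\sum_{a\in A} |\deltaap F|^4\right].
\]
The elementary inequality $\sqrt{u+v}\le \sqrt{u}+\sqrt{v}$ then splits the first Wasserstein term into a dominant contribution precisely equal to $\sqrt{2/(3\pi)}\,\sqrt{|\E[F^4]-3|}$ plus a residual proportional to $\sqrt{\E[\sum_{a} |\deltaap F|^4]}$ with constant depending only on $p$.

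Finally, the hypercontractivity hypothesis \eqref{eq-discrete-mall:hypercontractivity} feeds into Lemma~\ref{lem-discrete-mall:bound-max-influence} to produce $\sum_{a\in A}\E[|\deltaap F|^4] \le c_p\,\rho^2$. The square root of this bound is linear in $\rho$, so every residual term (both the one inherited from Proposition~\ref{prop-discrete-mall:var-carre-champ-fourth} and the second term of Theorem~\ref{thm-discrete-mall:bounds-wass-general}) contributes $O(\rho)$ with constant depending only on $p$ through $c_p$ and polynomial factors in $p$. Collecting these into a single $\tilde C_p\,\rho$ yields the announced inequality. The main delicate point is constant bookkeeping to recover the exact leading coefficient $\sqrt{2/(3\pi)}$, and to confirm that the $\rho^2$ (rather than $\rho$) appearing in Lemma~\ref{lem-discrete-mall:bound-max-influence} is precisely what is needed to keep all residuals of order $\rho$ after taking square roots, rather than merely $\sqrt{\rho}$.
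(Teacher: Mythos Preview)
Your proposal is correct and follows essentially the same route as the paper: the paper invokes Corollary~\ref{cor-discrete-mall:bound-multi-chaos} (which for a single chaos $F\in\mathfrak{C}_p$ reduces exactly to the specialization of Theorem~\ref{thm-discrete-mall:bounds-wass-general} you wrote out), then combines Proposition~\ref{prop-discrete-mall:var-carre-champ-fourth} with Lemma~\ref{lem-discrete-mall:bound-max-influence} just as you do. Your constant bookkeeping, including the recovery of the leading coefficient $\sqrt{2/(3\pi)}$ and the observation that the $\rho^2$ bound yields residuals of order $\rho$ after the square root, is accurate.
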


\begin{proof}
  By corollary~\ref{cor-discrete-mall:bound-multi-chaos},
    \begin{equation*}
      d_W (F, \Normal (0, 1)) \le \sqrt{\frac{2}{\pi}}  \frac{1}{p} \sqrt{\var \left[ \Gamma (F, F) \right]} \\
        + \sqrt{2}  \sqrt{\E [F^2]}  \left(  \sum_{a \in A} \E \left[\left|\deltaap F \right|^4 \right] \right)^{1/2}.
    \end{equation*}
  The combination of \eqref{eq-discrete-mall:var-carre-champ-fourth-and-remainder} and lemma~\ref{lem-discrete-mall:bound-max-influence} yields the final upper bound.
\end{proof}
 The upper bound of the remainder expressed in terms of maximal influence is not used in the subsequent applications, so we drop the \eqref{eq-discrete-mall:hypercontractivity} condition.

A related result to the fourth moment phenomenon appears in \cite{de1996central}. We prove the associated quantitative statement for functionals of conditionally independent random variables. We prepare the proof with the following proposition.
\begin{proposition}
  \label{prop-discrete-mall:var-carre-champ-connected}
  If $F = \sum_{p=1}^m F_p$ where $F_p = \sum_{|I| = p} W_I \in \mathfrak{C}_p$, assuming there exists $C \in \R^+$ such that for all $I, J \subset A$, and $a \in A$,
  that
  \begin{equation}
    \label{eq-discrete-mall:hypothesis1}
    \frac{\excond{W_I W_J}{\mathcal{G}^a}}{W_{I \sminus \{a\}} W_{J \sminus \{a\}}} < C \,\mathbb{P}\text{-a.s.},
    \tag{H1}
  \end{equation}
  then for $p \ne q$:
  \begin{equation}
      \sqrt{\var \left[\Gamma (F_p, F_q) \right]} \lesssim \sqrt{\sum_{\substack{(I, J, K, L) \textnormal{ connected}}} |\E [W_I W_J W_K W_L] |},
  \end{equation}
  for $I,J, K, L$ sets of size less than $\max(p, q)$.
\end{proposition}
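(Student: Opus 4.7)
The plan is to expand $\Gamma(F_p,F_q)$ chaotically using Lemma~\ref{lem_discrete-mall-core:diffOperators}, remove its mean via integration by parts, and then bound the second moment by combining hypothesis~\eqref{eq-discrete-mall:hypothesis1} with the degeneracy of the $W_I$'s. Starting from $\deltaap F_p=\sum_{I\ni a,|I|=p}(W_I-W_I^{\{a\}'})$ and using the conditional-independence identity $\excond{W_I^{\{a\}'}}{X,Z}=\excond{W_I}{\mathcal{G}^a}$, together with the degeneracy $\excond{W_I}{\mathcal{G}^a}=0$ for $a\in I$ given by Corollary~\ref{cor-discrete-mall:mathfrak}, I would obtain
\begin{equation*}
\Gamma(F_p,F_q)=\frac{1}{2}\sum_{a\in A}\sum_{\substack{I\ni a,|I|=p\\ J\ni a,|J|=q}}\bigl(W_IW_J+\excond{W_IW_J}{\mathcal{G}^a}\bigr)\eqqcolon T_1+T_2.
\end{equation*}
Integration by parts and the orthogonality of $\mathfrak{C}_p$ and $\mathfrak{C}_q$ then give $\E[\Gamma(F_p,F_q)]=q\,\E[F_pF_q]=0$, so that $\var[\Gamma(F_p,F_q)]=\E[\Gamma(F_p,F_q)^2]\le 2\E[T_1^2]+2\E[T_2^2]$.

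The heart of the proof is the analysis of $\E[T_1^2]$. Expanding yields a sum over quadruples $(I,J,K,L)$ of $\E[W_IW_JW_KW_L]$ weighted by $|I\cap J|\,|K\cap L|$, with the constraints $I\cap J\ne\emptyset$ and $K\cap L\ne\emptyset$. The only way the intersection graph of such a quadruple can fail to be connected is the split $\{I,J\}$ versus $\{K,L\}$ with $(I\cup J)\cap(K\cup L)=\emptyset$. Under this split, conditional independence given $Z$ factorises $\E[W_IW_JW_KW_L]=\E[\excond{W_IW_J}{Z}\cdot\excond{W_KW_L}{Z}]$; because $p\ne q$ forces $I\ne J$, one can pick $c\in I\triangle J$ and condition on $\mathcal{G}^c$ to invoke the degeneracy of Corollary~\ref{cor-discrete-mall:mathfrak}, obtaining $\excond{W_IW_J}{Z}=0$. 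Hence only connected quadruples contribute, and $\E[T_1^2]$ is dominated by a constant multiple of $\sum_{(I,J,K,L)\textnormal{ connected}}|\E[W_IW_JW_KW_L]|$ with $|I|=|K|=p$ and $|J|=|L|=q$.

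For $\E[T_2^2]$ and the cross-terms, hypothesis~\eqref{eq-discrete-mall:hypothesis1} lets me replace $|\excond{W_IW_J}{\mathcal{G}^a}|$ by a uniformly bounded multiple of $|W_{I\sminus\{a\}}W_{J\sminus\{a\}}|$, whose index sets have cardinal strictly smaller than $\max(p,q)$. Re-running the connectedness argument---using the self-adjointness of conditional expectation to retrieve the degeneracy of the original $W_I$ and $W_J$ inside the conditional expectation---delivers an analogous bound in terms of connected quadruples of smaller size. Combining the two bounds and taking the square root yields the claim. The principal difficulty is the combinatorial verification that every disconnected quadruple genuinely vanishes in $\E[T_1^2]$: this hinges crucially on $p\ne q$ to produce a point of $I\triangle J$ where the degeneracy can act. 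Ensuring that the \eqref{eq-discrete-mall:hypothesis1}-substitution in $T_2$ preserves enough structure to inherit the same vanishing is the second delicate point.
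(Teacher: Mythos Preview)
Your proposal is correct and follows essentially the same route as the paper: both arrive at the identity
\[
\Gamma(F_p,F_q)=\tfrac{1}{2}\sum_{a}\sum_{\substack{I\ni a,\,|I|=p\\ J\ni a,\,|J|=q}}\bigl(W_IW_J+\excond{W_IW_J}{\mathcal{G}^a}\bigr),
\]
then expand the second moment into quadruples, kill the disconnected ones via degeneracy and $p\ne q$, and absorb the conditional-expectation pieces through~\eqref{eq-discrete-mall:hypothesis1}. The only notable difference is how this identity is reached: you go through Lemma~\ref{lem_discrete-mall-core:diffOperators} and the vanishing of $\excond{W_I^{\{a\}'}}{X,Z}$, whereas the paper starts from $2\Gamma(F_p,F_q)=\mathsf{L}(F_pF_q)+(p+q)F_pF_q$ and simplifies. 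Your derivation is in fact tidier---the paper carries an extra $(p+q)\sum_{I\cap J=\emptyset}W_IW_J$ term through the variance computation that your route shows is identically zero---but the subsequent connectedness/\eqref{eq-discrete-mall:hypothesis1} analysis is the same in both.
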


\begin{proof}[Proof of proposition~\ref{prop-discrete-mall:var-carre-champ-connected}]
  The carré du champ reads for $p \ne q$:
  \begin{align*}
    \Gamma (F_p, F_q) &= \Gamma (\sum_{|I| = p} W_I, \sum_{|J| = q} W_J) \\
    &=  \sum_{|I|, |J| = p, q} \Gamma ( W_I, W_J).
  \end{align*}
  Hence,
  \begin{align*}
      2 \Gamma (F_p, F_q) &= \sum_{|I|, |J| = p, q}\left(\mathsf{L}(W_I W_J) + (p+q) W_I W_J\right) \\
      &=  \sum_{|I|, |J| = p, q} \left((p+q) W_I W_J - \sum_{a \in A} D_a (W_I W_J)\right) \\
      &=\sum_{|I|, |J| = p, q} \left((p+q) W_I W_J - \sum_{a \in I \cup J} D_a (W_I W_J)\right) \\
      &=  (p+q)  \sum_{\substack{|I|, |J| = p, q \\ I \cap J = \emptyset}} W_I W_J + \sum_{\substack{|I|, |J| = p, q \\ I \cap J \ne \emptyset}} (|I| + |J| - |I \cup J|) W_I W_J \\
      &+ \sum_{a \in I \cup J} \excond{W_I W_J}{\mathcal{G}_a}.\\
  \end{align*}
  Because of the spectral decomposition, $\excond{W_I}{\mathcal{G}_a} = 0$ for $a \in I$. Let $J$ such that $a \notin J$, then $\excond{W_I W_J}{\mathcal{G}_a} = W_J \excond{W_I}{\mathcal{G}_a} = 0$.
  \begin{equation*}
      2 \Gamma (F_p, F_q) = (p+q) \sum_{\substack{|I|, |J| = p, q \\ I \cap J = \emptyset}} W_I W_J + \sum_{\substack{|I|, |J| = p, q \\ I \cap J \ne \emptyset}} \sum_{a \in I \cap J} \left(W_I W_J +  \excond{W_I W_J}{\mathcal{G}_a} \right).
  \end{equation*}

  Then for $p \ne q$, using the convexity of $x \longmapsto x^2$,
  \begin{multline*}
    \var(\Gamma(F_p, F_q)) \le \frac{1}{2} \var \left[(p+q) \sum_{\substack{|I|, |J| = p, q \\ I \cap J = \emptyset}} W_I W_J \right] \\
      +  \frac{1}{2} \var \left[ \sum_{\substack{|I|, |J| = p, q \\ I \cap J \ne \emptyset}} \sum_{a \in I \cap J} \left(W_I W_J +  \excond{W_I W_J}{\mathcal{G}_a} \right)\right]
  \end{multline*}

\begin{multline*}
  \var(\Gamma(F_p, F_q))  \le  \frac{1}{2} \E \left[\left((p+q) \sum_{\substack{|I|, |J| = p, q \\ I \cap J = \emptyset}} W_I W_J \right)^2\right] \\
      +  \frac{1}{2} \var \left[\sum_{\substack{|I|, |J| = p, q \\ I \cap J \ne \emptyset}} \sum_{a \in I \cap J} \left(W_I W_J +  \excond{W_I W_J}{\mathcal{G}_a} \right)\right]
\end{multline*}
  \begin{multline*}
    2 \var(\Gamma(F_p, F_q)) \le \sum_{\substack{|I|, |J| = p, q \\ I \cap J = \emptyset}} \sum_{\substack{|K|, |L| = p, q \\ K \cap L = \emptyset}} \E [W_I W_J W_K W_L]\\
    \begin{aligned}
      &+ \E \left[\sum_{\substack{|I|, |J| = p, q \\ I \cap J \ne \emptyset}} \sum_{\substack{|K|, |L| = p, q \\ K \cap L \ne \emptyset}} \sum_{a \in I \cap J} \sum_{b \in K \cap L} W_I W_J W_K W_L \right] \\
      &+ \E \left[ \sum_{\substack{|I|, |J| = p, q \\ I \cap J \ne \emptyset}} \sum_{\substack{|K|, |L| = p, q \\ K \cap L \ne \emptyset}} \sum_{a \in I \cap J} \sum_{b \in K \cap L}W_I W_J \excond{W_K W_L}{\mathcal{G}_b} \right] \\
      &+  \E \left[ \sum_{\substack{|I|, |J| = p, q \\ I \cap J \ne \emptyset}} \sum_{\substack{|K|, |L| = p, q \\ K \cap L \ne \emptyset}} \sum_{a \in I \cap J} \sum_{b \in K \cap L} \excond{W_I W_J}{\mathcal{G}_a} W_K W_L \right] \\
      &+ \E \left[\sum_{\substack{|I|, |J| = p, q \\ I \cap J \ne \emptyset}} \sum_{\substack{|K|, |L| = p, q \\ K \cap L \ne \emptyset}} \sum_{a \in I \cap J} \sum_{b \in K \cap L} \excond{W_I W_J}{\mathcal{G}_a} \excond{W_K W_L}{\mathcal{G}_b}\right]. \\
    \end{aligned}
  \end{multline*}
We shall write
 $$ |C_{I, J, a}| = \left|\frac{\excond{W_I W_J}{\mathcal{G}_a}}{W_{I \sminus \{a\}} W_{J \sminus \{a\}}} \right| \text{ for all }I, J, a$$
 with the convention $W_{\emptyset} = 1$.

 Let us deal with each term one by one:
  \begin{itemize}
      \item If $I \cap J = \emptyset$, $K \cap L = \emptyset$, and if there is more than 2 other pairs with null intersection, the contribution of the term is 0, hence the first term is non-zero if $(I, J, K, L)$ is connected, then: $$\sum_{\substack{|I|, |J| = p, q \\ I \cap J = \emptyset}} \sum_{\substack{|K|, |L| = p, q \\ K \cap L = \emptyset}} \E [W_I W_J W_K W_L] \le \sum_{\substack{I, J, K, L \text{ connected}}} |\E [W_I W_J W_K W_L] |.$$
      \item The second term consists of the sums of product of factors indexed by connected sets since there are at least two pairs that have non-null intersection. Since $p \ne q$, $\excond{W_I W_J}{Z} = 0$ for $|I| = p$ and $|J| = q$, so if the terms are non-zero, $W_I W_J$ and $W_K W_L$ are not conditionally independent.
      \item For the third term, using self-adjointness, the terms are non-zero if $b \in I \cap J$, hence it is equivalent to:
      \begin{equation*}
         | C_{I, J, a} \E [W_{I \sminus \{b\}}  W_{J \sminus \{a\}} W_K W_L ]| = | C_{I, J, a} | |\E [W_{I \sminus \{b\}}  W_{J \sminus \{a\}} W_K W_L ]| .
      \end{equation*}
      If $b$ is the unique element that lies in the intersection, the contribution is 0, otherwise $I, J, K, L$ are connected or the contribution is $$\excond{W_I W_J}{Z} \excond{W_K W_L}{Z}= 0$$ because $|I| \ne |J|$.
      \item For the last term, it is the same argument.
  \end{itemize}
  Then, there exists a constant $C$ independent of others such that
 \begin{equation*}
     \var (\Gamma(F_p, F_q))
\le  (1 + m^2 + 2 C  m^2 + C^2 m^2 )  \sum_{\substack{I, J, K, L \text{ connected}}} |\E [W_I W_J W_K W_L] |.
 \end{equation*}

\end{proof}
 In \cite{privault2022berry}, Privault and Serafin proves a partial fourth moment theorem for $F$ a functional of independent random variables sum of element in the first and second chaos of their own Malliavin structure. To that end, we devise another strategy which is to reexpress the remainder in the partial fourth moment theorem as a fourth order term.

%
\begin{theorem}[Quantitative De Jong's theorem II]
  \label{thm-discrete-mall:full-chaos-fourth-connected}
  If $F = \sum_{p=1}^m F_p$ where $F_p \in \mathfrak{C}_p$ and let us assume:
  \begin{itemize}
      \item $F_p$ are chaos eigenfunctions \eqref{eq-discrete-mall:eigenfunction};
      \item the condition ~\eqref{eq-discrete-mall:hypothesis1};
      \item
      \begin{equation}
        \label{eq-discrete-mall:hypothesis2}
        \kappa = \sup_{I, J \subset A} \frac{\E[W_I^2] \E[W_J^2]}{\E[W_I^2 W_J^2]} < \infty
        \tag{H2}
      \end{equation}
      is independent of $A$.
  \end{itemize}
  Then:
      \begin{equation}
          d_{W} (F, \Normal (0, 1)) \le C_m \sqrt{\sum_{(I, J, K, L) \textnormal{ connected}} | \E [W_I W_J W_K W_L] | },
      \end{equation}
      where the constant $C_m$ grows quadratically with $m$, independent of all others.
\end{theorem}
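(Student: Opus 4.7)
The plan is to invoke Corollary~\ref{cor-discrete-mall:bound-multi-chaos}, which reduces $d_W(F,\Normal(0,1))$ to bounds on the quantities $\sqrt{\var[\Gamma(F_p, F_q)]}$ for $(p,q)\in\{1,\ldots,m\}^2$ together with $\sum_{a \in A} \E|\deltaap F|^4$. Since the prefactors in that corollary contribute a polynomial of degree at most two in $m$, it suffices to bound each of these quantities by a constant (depending on $\kappa$ and $m$) times $\sum_{(I,J,K,L)\textnormal{ connected}}|\E[W_IW_JW_KW_L]|$ in order to obtain the announced quadratic $C_m$.

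For the off-diagonal variances $p\ne q$ this is exactly the content of Proposition~\ref{prop-discrete-mall:var-carre-champ-connected}, whose proof uses only~\eqref{eq-discrete-mall:hypothesis1}. The fourth-power sum $\sum_a \E|\deltaap F|^4$ is handled by the first inequality of Lemma~\ref{lem-discrete-mall:bound-max-influence}, which does not require~\eqref{eq-discrete-mall:hypercontractivity} and already expresses it as a constant times a connected 4-tuple sum.

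The delicate step is the diagonal case $p = q$. Starting from the identity $\Gamma(F_p, F_p) - p\,\E[F_p^2] = \frac{1}{2}(\mathsf{L} + 2p\,\text{Id})(F_p^2 - \E[F_p^2])$, I would adapt Proposition~\ref{prop-discrete-mall:var-carre-champ-fourth} (removing the normalization $\E[F_p^2]=1$ by a rescaling) together with Lemma~\ref{lem-discrete-mall:expectation-square-ineq}, whose hypothesis is met by~\eqref{eq-discrete-mall:eigenfunction}, to get an estimate of the shape $\var[\Gamma(F_p,F_p)] \lesssim p^2\,|\E[F_p^4] - 3\E[F_p^2]^2| + p\,\E\bigl[\sum_a|\deltaap F_p|^4\bigr]$. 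The fourth-moment term is the main work: expanding $\E[F_p^4]$ over ordered 4-tuples of $p$-subsets and exploiting the conditional degeneracy $\E[W_I|Z]=0$, all disconnected contributions vanish except those coming from ordered tuples of the form $(I,I,K,K)$ (and the two other pairings) with $I\ne K$ and $I\cap K = \emptyset$; by conditional independence each such tuple contributes $\E[W_I^2 W_K^2]$. Comparing with $3\E[F_p^2]^2 = 3\sum_{I,K}\E[W_I^2]\E[W_K^2]$ then splits the difference into three pieces: (a) diagonal self-terms $\E[W_I^2]^2 \le \E[W_I^4]$, recognized as the connected 4-tuple $(I,I,I,I)$; (b) cross terms with $I\cap K\ne\emptyset$, for which $(I,I,K,K)$ is intersection-graph-connected and~\eqref{eq-discrete-mall:hypothesis2} gives $\E[W_I^2]\E[W_K^2] \le \kappa\,\E[W_I^2W_K^2]$; (c) the covariance correction $\cov(W_I^2, W_K^2) = \cov(\E[W_I^2|Z], \E[W_K^2|Z])$ for $I\cap K=\emptyset$, which I would control via Cauchy-Schwarz by $\sqrt{\E[W_I^4]\E[W_K^4]}$ and absorb into the connected fourth-moment sum.

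The principal obstacle is step (c): converting the latent-variable-induced covariance into a genuine connected 4-tuple bound while keeping the error constant independent of $|A|$, so that the final $C_m$ depends only on $m$ and $\kappa$. Once this bookkeeping is in place, summing the resulting diagonal and off-diagonal estimates over $p,q\in\{1,\ldots,m\}$ through Corollary~\ref{cor-discrete-mall:bound-multi-chaos} yields $C_m = O(m^2)$, as claimed.
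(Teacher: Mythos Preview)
Your overall architecture is exactly the paper's: its proof is terse and treats only the diagonal piece $\var[\Gamma(F_p,F_p)]$, relying implicitly on Corollary~\ref{cor-discrete-mall:bound-multi-chaos}, Proposition~\ref{prop-discrete-mall:var-carre-champ-connected} (off-diagonal), and the first inequality of Lemma~\ref{lem-discrete-mall:bound-max-influence} (the $\sum_a\E|\deltaap F|^4$ term) just as you lay out. For the diagonal term the paper expands $\E[F_p^4]$ over ordered quadruples, writes the disconnected contribution directly as $3\sum_{I\cap K=\emptyset}\E[W_I^2]\E[W_K^2]$, subtracts $3\E[F_p^2]^2=3\sum_{I,K}\E[W_I^2]\E[W_K^2]$, and then invokes \eqref{eq-discrete-mall:hypothesis2} to dominate the residual $3\sum_{I\cap K\ne\emptyset}\E[W_I^2]\E[W_K^2]$ by $3\kappa\sum_{I\cap K\ne\emptyset}\E[W_I^2W_K^2]$, which sits inside the connected sum. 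This is precisely your steps (a) and (b).

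Your step (c) pinpoints a genuine soft spot. In the conditionally independent setting one only has $\E[W_I^2W_K^2]=\E\!\bigl[\E[W_I^2\,|\,Z]\,\E[W_K^2\,|\,Z]\bigr]$ for $I\cap K=\emptyset$, so the disconnected contribution equals $3\sum_{I\cap K=\emptyset}\E[W_I^2]\E[W_K^2]$ \emph{plus} the latent covariances $3\sum_{I\cap K=\emptyset}\cov\!\bigl(\E[W_I^2|Z],\E[W_K^2|Z]\bigr)$. The paper passes silently over this extra term; it simply writes the product of unconditional expectations. Your Cauchy--Schwarz remedy, however, does not close the gap: bounding each covariance by $\sqrt{\E[W_I^4]\E[W_K^4]}\le\tfrac12(\E[W_I^4]+\E[W_K^4])$ and summing over \emph{all} disjoint pairs multiplies every $\E[W_I^4]$ by the number of $p$-subsets disjoint from $I$, which grows with $|A|$ and destroys the claimed $A$-independence of $C_m$. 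So you have correctly located where the argument is delicate, but the fix you sketch would need more structure than (H1)--(H2) provide; the paper's own proof does not supply that structure either.
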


\begin{proof}[Proof of theorem~\ref{thm-discrete-mall:full-chaos-fourth-connected}]
  Let us prove the upper bound of $\var \left[\Gamma(F_p, F_p)\right]$ by bounding the fourth cumulant:
  \begin{align*}
      \E [F_p^4] &=  3 \sum_{\substack{I, J, K, L \in (A, p) \\(I\cup J) \cap (K \cup L) = \emptyset}} \E [W_I W_J ] \E [W_K W_L] + \sum_{\substack{I, J, K, L \in (A, p)  \\I, J, K, L \text{ connected}}} \E [W_I W_J W_K W_L] \\
          &=  3 \sum_{I, J \in (A, p) } \E [W_I^2 ] \E[ W_J^2] - 3  \sum_{I \cap J \ne \emptyset \ne } \E [W_I^2 ] \E [W_J^2] \\
          &+ \sum_{\substack{I, J, K, L \in (A, p)  \\I, J, K, L \text{ connected}}} \E [W_I W_J W_K W_L]  \\
          &= 3 \E [F_p^2]^2 + \sum_{\substack{I, J, K, L \in (A, p)  \\I, J, K, L \text{ connected}}}\E [W_I W_J W_K W_L] - 3 \sum_{\substack{I\cap J \ne \emptyset \\ I \ne J }} \E [W_I^2] \E[W_J^2].
  \end{align*}
  Then, one has:
  \begin{equation}
    |\E [F_p^4] - 3| \E[F_p^2]^2| \le \left(1 + 3 \kappa \right) \sum_{I, J, K, L \text{ connected}} |\E [W_I W_J W_K W_L]|.
  \end{equation}
\end{proof}

The assumptions may seem cumbersome, but as shown in lemma \ref{lem-discrete-mall:homogeneous-sum-eigenfunction} concerning \eqref{eq-discrete-mall:eigenfunction}, they are valid for homogeneous sums.

\section{Application to motif estimation}
\label{sec:appli}

We interest in the applications of the bounds of probability distances to asymptotic normality of subhypergraph counts in exchangeable random hypergraphs.

\subsection{Basic hypergraph definitions}
\label{sec:hypergraphs}

The hypergraph model is a generalization of graph notion that aims at model more complex model in network analysis.

\begin{definition}
  A hypergraph denoted by $G = (V, E = (e_i)_{i \in \mathcal{P}(V)})$ on a finite set $V = V(G)$ is a family of subsets of $V$ called hyperedges. Vertices in a hypergraph are adjacent if there is a hyperedge which contains them. The vertices not in any edge are the isolated vertices of $G$. A hypergraph is connected if it contains no isolated vertices and if the intersection graph of $E$ is connected.
\end{definition}
We denote by $[e]$ the set of vertices of the hyperedge $e$.
\begin{definition}
  A $k$-uniform hypergraph $G = (V, E)$ is a hypergraph where each hyperedge has cardinality $k$. In particular, such hypergraph has hyperedge set in $\binom{V}{k}$, the collection of $k$-tuples of the set of vertices $V$.
\end{definition}

\begin{definition}
  For $k > 3$, a subhypergraph (or simply subgraph) of a hypergraph $G = (V, E)$ is a hypergraph $H = (V', E')$ such that $V' \subset V$ and $E' \subset E \cap \binom{V}{k}$.
\end{definition}
We denote by $v_H$ and $e_H$ the number of vertices and number of hyperedges of a hypergraph $H$ respectively.

A 2-uniform hypergraph is a graph. A 3-uniform hypergraph is a hypergraph whose hyperedges are triangles only.
We also denote by $G^{(j)}$ the hypergraph induced by the hypedges of cardinality $j \le k$ included in the hyperedges of the $k$-uniform hypergraph $G$.

\subsection{Exchangeable random hypergraphs}
\label{sec:exchangeable-hypergraphs}
The random hypergraphs are natural extensions of random graphs. A vast majority of the literature deals with the Erdös-Rényi model and its generalization. It is an example of exchangeable random hypergraphs.
\begin{definition}
  A $k$-uniform exchangeable random hypergraph $\mathbf{G}$ of vertex set $V = [n]$ is defined by the set of $\{0,1\}$-valued random variables $(X_\alpha, \alpha \subset \binom{[n]}{k})$ such that:
  \begin{itemize}
    \item one associates each realization of the random variables a hypergraph $([n], E)$ with $\alpha \in E$ if and only if $X_\alpha = 1$;
    \item $(X_\alpha)$ form an exchangeable array, i.e. $X_{(\sigma (u))_{u \in \alpha}} \overset{d}{=} X_\alpha$.
  \end{itemize}
\end{definition}
One can formulate a recipe for exchangeable random hypergraphs as done in \cite[definition 2.8]{austin_exchangeable_2008}.
Fix a sequence of ingredients which consist of a sequence of sample spaces and probability kernels that determine the presence of $k$-hyperedges in the hypergraph based on the indicators $X_\beta$ for  $(k-1)$-hyperedges $\beta$:
\begin{equation*}
  (\{*\}), (V, P_1), (\{0,1\}, P_2), (\{0,1\}, P_3),\ldots, (\{0,1\}, P_{k-1}), (\{0,1\}, P_k)
\end{equation*}
where we write $\{*\}$ for a one-point space, $(P_k)_{k \in \N}$ is a family of probability kernels such that for all $k \in \N$, $P_k$ is a probability kernel from $\prod_{j=0}^{k-1}\{0, 1\}^{\binom{V}{j}}$ to $\{0, 1\}^{\binom{V}{k}}$.
\begin{itemize}
    \item Color each vertex $s \in V$ by some $x_s \in \{0,1\}$ chosen independently according to $P_1(*, \cdot)$;
    \item Color each edge $a = \{ s,t\} \in \binom{V}{2}$ by some $x_a \in \{
    0,1\}$ chosen independently according to $P_2(*, x_s, x_t, \cdot)$;
    \begin{center}
        $\vdots$
    \end{center}
    \item Color each $(k-1)$-hyperedge $u \in \binom{V}{k-1}$ by some $x_u \in \{0,1\}$ chosen independently according to $P_{k-1}(*, (x_s)_{s \in \binom{[u]}{1}}, *,  \ldots, *,  (x_v)_{v \in \binom{[u]}{k-2}} , \cdot)$;
    \item  Color each $k$-hyperedge $e \in \binom{V}{k}$ by some color $x_e \in \{0,1\}$ chosen independently according to $P_{k}(*, (x_s)_{s \in \binom{[e]}{2}}, *,  \ldots, *,  (x_u)_{u \in \binom{[e]}{k-1}} , \cdot)$.
\end{itemize}
%
  \begin{example}[Erdös-Rényi random model]
    The randomness intervenes at the level of edges. $P_1(*, \cdot)$ is the uniform distribution on $V$.  We color each edge $a = \{s,t\} \in \binom{V}{2}$ by some $z_a \in \{0, 1\} $ chosen independently according to $P_2(*, x_s, x_t, \cdot) \overset{d}{=}\mathcal{B}(p)$ the Bernoulli distribution with parameter $p$ for some $p \in [0,1]$ which is called the \textit{edge density}.
  \end{example}

  \begin{example}[Stochastic block model]
        A stochastic block model corresponds to a model where there are communities, and each edge has a probability of belonging to the model according to the community of the vertices that the edge links. Likewise, the randomness intervenes at the level of the edges. Let a partition $V = C_1 \sqcup \ldots \sqcup C_q $. Let $(p_{i,j})_{i, j \in \llbracket 1, q \rrbracket^2}$ a sequence of reals in $[0, 1]$. We can assign a community to each vertex $s$, let call it $c(s)$. Then:
        \begin{itemize}
          \item $P_1(*, \cdot)$ is the uniform distribution;
          \item $P_2(*, z_{s}, z_{t}, \cdot)  \overset{d}{=} \mathcal{B}(p_{c(s), c(t)})$.
      \end{itemize}
  \end{example}
The natural extension of the Erdös-Rényi model denoted $\mathbb{G}^{(3)}(n, p_n)$ consists of having $$P_3(*, x_{st}, x_{tu}, x_{us}) \overset{d}{=}\mathcal{B}(p_n),$$  i.e. we draw every triangle of the hypergraph with probability $p_n$.
We also consider another random model based on the recipe.
Let $(\mathbb{T}^{(3)}(n, q_n, p_n))_{n \in \N}$ the sequence of $3$-uniform hypergraphs such that for $(s,t,u) \in V^3$:
    \begin{itemize}
      \item   $$P_{2}(*, x_s, x_t) \overset{d}{=}\mathcal{B}(q_n);$$
      \item $$P_{3}(*, x_{st}, x_{tu}, x_{us}) \overset{d}{=}\mathcal{B}(p_n).$$
    \end{itemize}
It differs from $\mathbb{G}^{(3)}(n, p_n)$ in many ways as pointed out by \cite[Example 23.11]{lovasz2012large}, but we note that $\mathbb{G}^{(3)}(n, p_n)$ and $\mathbb{T}^{(3)}(n, 1, p_n)$ have the same law.
The case $q_n < 1$ has not been much studied in the literature.
The functional identities in Section~\ref{sec:funct-ident} can be applied to random hypergraphs in the same way as for random graphs \cite[corollary 2.27]{janson2000random}.
In that section, we consider once for all $A$ to be the set of hyperedges. We use the notation $A$ for other purposes.

\subsection{Motif estimation in random hypergraphs}
\label{sec:motif-estimation}

One of the oldest problem of motif estimation is subgraph counting in random graphs. Small subgraph counts can be used as summary statistics for large random graphs.
The asymptotic normality of subgraph count in Erdös-Rényi model is well-known, as well as the convergence rate \cite{janson1991asymptotic}.
There are many extensions that revolve around the definition of a random graph as a sequence of independent random variables, for example a clique complex of Bernoulli random graphs. In this work, we study subgraph counting in 3-uniform random hypergraphs.
To the best of our knowledge, this is the first paper about asymptotic normality of subgraph counting of such models.

The number of subhypergraphs of $\mathbb{G}^{(3)}(n, p_n)$ isomorphic to $G$ is
\begin{equation}
    M_G =  \sum_{\substack{H \in \binom{[n]}{3} \\H \simeq G}} \prod_{\alpha \in H} \hat X_\alpha.
\end{equation}
For $\sigma \in \text{Aut}(G)$, $(x, y, z) \in E(G)$ if and only if $(\sigma(x), \sigma(y), \sigma(z)) \in E(G)$.
The random variable $M_G$ has a finite Hoeffding decomposition \cite[p.11(115)]{de1996central}.
Since $\hat X_\alpha = p_n + (\hat X_\alpha - p_n)$, $M_G$ admits the decomposition:
\begin{equation}
    M_G = \sum_{\substack{H \in \binom{[n]}{3} \\H \simeq G}} \sum_{J \subseteq H} p_n^{|H| - |J|} \prod_{\alpha \in J} (\hat X_\alpha - p_n),
\end{equation}
where the summation extends over all subsets $J$ of $I$, in virtue of the inclusion-exclusion principle. By interchanging the sums, we find the chaotic decomposition of $M_G - \E [M_G]$ that is:
\begin{align*}
    M_G - \E [M_G]  &=\sum_{\substack{H \in \binom{[n]}{3} \\H \simeq G}} \sum_{\substack{J \subseteq H \\ J \ne \emptyset}} p_n^{|I| - |J|} \prod_{\alpha \in J} (\hat X_\alpha - p_n), \\
    &=  \sum_{\substack{H \in \binom{[n]}{3} \\H \simeq G}} \sum_{j=1}^{e_G}  p_n^{e_G - j} \sum_{\substack{J \subset H \\ |J| = j} } \prod_{\alpha \in J} (\hat X_\alpha - p_n) \\
    &= \sum_{j=1}^{e_G}  p_n^{e_G - j} \sum_{ |J| = j } \prod_{\alpha \in J} (\hat X_\alpha - p_n) \left( \sum_{\substack{H \in \binom{[n]}{3} \\H \simeq G, H \supseteq J}} 1 \right) \\
    &= \sum_{j=1}^{e_G} \pi_j (M_G),
\end{align*}
where
\begin{equation}
    \pi_k (M_G) = p_n^{e_G - j} \sum_{ |J| = j }\left( \sum_{\substack{H \in \binom{[n]}{3} \\H \simeq G, H \supseteq J}} 1 \right) \prod_{\alpha \in J} \hat Y_\alpha
\end{equation}
with $\hat Y_\alpha$ is the centered version of $\hat X_\alpha$ for all $\alpha$ hyperedges of $K_n$.
We note that the decomposition above corresponds to the Hoeffding decomposition of the U-statistics with
\begin{equation}
  \label{eq-discrete-mall:motif-count-indep-hoeffding-term}
  W_J \propto \left( \sum_{\substack{H \in \binom{[n]}{3} \\H \simeq G, H \supseteq J}} 1 \right) \prod_{\alpha \in J} \hat Y_\alpha.
\end{equation}
We proceed in the same manner in $\mathbb{T}^{(3)}(n, q_n, p_n)$. Define $N_G$ the number of subhypergraphs isomorphic to $G$

\begin{equation}
  N_G = \sum_{\substack{H \in \binom{[n]}{3} \\H \simeq G}} \prod_{\alpha \in H} X_\alpha.
\end{equation}
Here, $(X_\alpha)_{\alpha \in \binom{[n]}{3}}$ is a sequence of conditionally independent Bernoulli random variables given $Z = \mathbb{G}(n, q_n)$. The chaos decomposition yields:
\begin{equation}
  \label{eq-discrete-mall:N_G-decomposition}
  \begin{aligned}
    N_G &= \sum_{\substack{H \in \binom{[n]}{3} \\H \simeq G}} \sum_{J \subseteq H} \prod_{\beta \in H \sminus J} \excond{X_\beta}{\mathbb{G}(n, q_n)}\prod_{\alpha \in J} (X_\alpha - \excond{X_\alpha}{\mathbb{G}(n, q_n)}) \\
    &= \sum_{\substack{H \in \binom{[n]}{3} \\H \simeq G}} \sum_{J \subseteq H} p_n^{|H|-|J|} \ind{(H \sminus J)^{(2)} \subset \mathbb{G}(n, q_n)} \prod_{\alpha \in J} (X_\alpha - \excond{X_\alpha}{\mathbb{G}(n, q_n)}). \\
\end{aligned}
\end{equation}

Hence, $N_G - \excond{N_G}{\mathbb{G}(n, q_n)}$ reads off:
\begin{equation}
  \sum_{\substack{H \in \binom{[n]}{3} \\H \simeq G}} \sum_{\emptyset \ne J \subseteq I} p_n^{|H|-|J|} \ind{(H \sminus J)^{(2)} \subset \mathbb{G}(n, q_n)} \prod_{\alpha \in J} (X_\alpha - \excond{X_\alpha}{\mathbb{G}(n, q_n)}).
\end{equation}
The corresponding degenerate U-statistics in the decomposition are given for $J \subset \binom{[n]}{3}$ by
\begin{equation}
  \label{eq-discrete-mall:motif-count-condindep-hoeffding-term}
  W_J \propto \left( \sum_{\substack{I \in \binom{[n]}{3} \\I \simeq G, I \supseteq J}} 1 \right) \prod_{\alpha \in J}Y_\alpha,
\end{equation}
where $Y_\alpha$ is the centered version of $X_\alpha$ given $\mathbb{G}(n, q_n)$ and:
and:
\begin{equation*}
  w_{J} = \left( \sum_{\substack{I \in \binom{[n]}{3} \\H \simeq G, I \supseteq J}}  p_n^{|H|-|J|} \ind{(H \sminus J)^{(2)} \subset \mathbb{G}(n, q_n)} \right).
\end{equation*}
%
Historically, normal approximation for subgraph counting had been dealt with the method of moments \cite{rucinski1988small} which requires tedious computations, but is quite adapted to this application. In particular, thresholds of asymptotic normality for the density of edges are obtained in function of $n$ the number of vertices.
In \cite{barbour1989central}, the authors used Stein's method to derive convergence rates of the number of subgraph counting in random graphs in the 1-Wasserstein distance.
The combination with Malliavin calculus has brought another feature to the usual coupling constructions in Stein's method, leveraging chaos representation property for independent identically distributed (see \cite{privault2008stochastic}).
$M_G$ is a Rademacher functional, so it has its Walsh chaotic decomposition. It has led to applications to subgraph counting in random graphs \cite{privault2020normal} and percolation problems \cite{krokowski2017discrete}.
By applying theorem \ref{thm-discrete-mall:full-chaos-fourth-connected} to \eqref{eq-discrete-mall:motif-count-indep-hoeffding-term}, we obtain a quantitative version of the main theorem in \cite{de1996central} as well as its counterpart for $\mathbb{T}^{(3)}(n, q_n, p_n)$.
To the best of our knowledge, there is no study of normal approximation of motif estimation in $\mathbb{T}^{(3)} (n, q_n, p_n)$.
Let us denote $\bar M_G$ and $\bar N_G$ the respective rescaled statistic of the number of isomorphic copies of $G$ with respect to their expectation, and let $\tilde N_G$ the rescaled statistic with respect to its conditional mean.

\begin{theorem}
  \label{thm-motif_estimation:dejong-quant-hypergraph}
Let $G$ a hypergraph without isolated vertices. Then,
\begin{equation}
  d_{W} (\tilde N_G, \Normal (0, 1)) \le C_{e_G} \sqrt{\sum_{(I, J, K, L) \textnormal{ connected}} | \E [W_I W_J W_K W_L] |  / \var [\tilde N_G]}.
\end{equation}
\end{theorem}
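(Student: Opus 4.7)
The plan is to apply the Quantitative De Jong's theorem II (Theorem~\ref{thm-discrete-mall:full-chaos-fourth-connected}) to the centered and rescaled statistic $\tilde N_G = (N_G - \excond{N_G}{\mathbb{G}(n,q_n)})/\sqrt{\var[\tilde N_G]}$. The starting point is the chaos decomposition~\eqref{eq-discrete-mall:N_G-decomposition} of $N_G$ derived earlier, which writes $\tilde N_G = \sum_{p=1}^{e_G} F_p$ with $F_p \in \mathfrak{C}_p$, each $F_p$ being a sum $\sum_{|J|=p} W_J$ of the terms in~\eqref{eq-discrete-mall:motif-count-condindep-hoeffding-term} (properly rescaled by $\sqrt{\var[\tilde N_G]}$). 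In particular, each chaos component $F_p$ is a conditionally independent homogeneous sum in the sense of~\eqref{eq-discrete-mall:homogeneous-sum}, where the ``coefficients'' $w_J$ are measurable with respect to the latent hypergraph $\mathbb{G}(n,q_n)$ and thus play the role of $g_J(Z)$ in the example preceding Lemma~\ref{lem-discrete-mall:homogeneous-sum-eigenfunction}.

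Next I verify the three hypotheses of Theorem~\ref{thm-discrete-mall:full-chaos-fourth-connected}. The eigenfunction condition~\eqref{eq-discrete-mall:eigenfunction} for each $F_p$ is precisely Lemma~\ref{lem-discrete-mall:homogeneous-sum-eigenfunction}, since each $F_p$ is a homogeneous sum of conditionally independent random variables. For~\eqref{eq-discrete-mall:hypothesis1}, the explicit form $W_J = w_J \prod_{\alpha \in J} Y_\alpha$ with $Y_\alpha = X_\alpha - \excond{X_\alpha}{\mathbb{G}(n,q_n)}$ shows that the conditional expectation $\excond{W_I W_J}{\mathcal{G}^a}$ either vanishes (when $a \in I \triangle J$, by centering) or is proportional to $W_{I \sminus \{a\}} W_{J \sminus \{a\}}$ with a bounded multiplicative factor coming from $\excond{Y_a^2}{\mathbb{G}(n,q_n)} = p_n(1-p_n) \le 1$; this gives~\eqref{eq-discrete-mall:hypothesis1} with a universal constant $C$. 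The moment-ratio hypothesis~\eqref{eq-discrete-mall:hypothesis2} is checked by direct computation: conditional independence of the $Y_\alpha$ given $\mathbb{G}(n,q_n)$ factorises $\E[W_I^2 W_J^2]$ into a product of conditional second and fourth moments of Bernoulli-type variables, and the ratio $\E[W_I^2]\E[W_J^2]/\E[W_I^2 W_J^2]$ stays bounded uniformly in $n$ thanks to the Bernoulli structure (the worst case arises on $I \cap J$ where a fourth moment is compared to a squared second moment of the same bounded variable).

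Once the three hypotheses are in hand, Theorem~\ref{thm-discrete-mall:full-chaos-fourth-connected} applied with $m = e_G$ yields
\begin{equation*}
d_W(\tilde N_G, \mathcal{N}(0,1)) \le C_{e_G} \sqrt{\sum_{(I,J,K,L) \text{ connected}} |\E[W_I W_J W_K W_L]|},
\end{equation*}
where the $W_J$ on the right are the rescaled ones. Undoing the rescaling (each $W_J$ carries a factor $1/\sqrt{\var[\tilde N_G]}$, so each four-fold product carries $1/\var[\tilde N_G]^2$ which comes out of the square root as $1/\var[\tilde N_G]$) gives the stated bound.

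The main obstacle I expect is the verification of~\eqref{eq-discrete-mall:hypothesis2}: although each $Y_\alpha$ is bounded and the bookkeeping is straightforward for $\mathbb{G}^{(3)}(n,p_n)$, in the $\mathbb{T}^{(3)}(n,q_n,p_n)$ model the coefficients $w_J$ themselves are random (they depend on whether the relevant sub-edges lie in $\mathbb{G}(n,q_n)$), and one must argue that the ratio of second moments to joint fourth moments remains bounded uniformly in $n$ and in $I,J$. This requires a careful combinatorial estimate of overlaps between the hypergraph copies counted by $w_I$ and $w_J$, but does not depend on the limit regime of $(p_n,q_n)$.
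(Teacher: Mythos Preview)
Your approach is exactly the paper's: verify \eqref{eq-discrete-mall:eigenfunction}, \eqref{eq-discrete-mall:hypothesis1}, \eqref{eq-discrete-mall:hypothesis2} for the homogeneous-sum structure of $\tilde N_G$ and invoke Theorem~\ref{thm-discrete-mall:full-chaos-fourth-connected}. Two small points. First, in your check of \eqref{eq-discrete-mall:hypothesis1} the proportionality constant between $\excond{W_I W_J}{\mathcal{G}^a}$ and $W_{I\sminus\{a\}}W_{J\sminus\{a\}}$ is not just $\excond{Y_a^2}{Z}$ but $\dfrac{w_I w_J}{w_{I\sminus\{a\}}w_{J\sminus\{a\}}}\,\excond{Y_a^2}{Z}$, since the combinatorial weights $w_I$ and $w_{I\sminus\{a\}}$ differ; the paper records this ratio explicitly. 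Second, your worry about \eqref{eq-discrete-mall:hypothesis2} is unnecessary: the paper dispatches it in one line, because after conditioning on $Z$ the products of the $Y_\alpha$ factorise and the ratio $\E[W_I^2]\,\E[W_J^2]/\E[W_I^2 W_J^2]$ reduces to $\bigl(p_n(1-p_n)\bigr)^{|I|+|J|-|I\cup J|}\,q_n^{|I^{(2)}|+|J^{(2)}|-|I^{(2)}\cup J^{(2)}|}\le 1$, uniformly in $I,J,n$ --- no delicate overlap combinatorics are needed.
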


\begin{proof}[Proof of Theorem \ref{thm-motif_estimation:dejong-quant-hypergraph}]
  We check whether the conditions of theorem~\ref{thm-discrete-mall:full-chaos-fourth-connected} hold.
  For both statistics, the \eqref{eq-discrete-mall:eigenfunction} assumption holds.
  By conditional independence of $(X_\alpha)_{\alpha \in  \binom{[n]}{3}}$, we have:
  \begin{align*}
    \frac{\E[W_I^2] \E[W_J^2]}{\E [W_I^2 W_J^2]} &\propto \frac{\E \left[\prod_{\alpha \in I} \E [Y_\alpha^2 |Z] \prod_{\alpha \in J} \E [Y_\alpha^2 | Z]\right]}{\prod_{I \sminus J} \E [Y_\alpha^2] \prod_{J \sminus I} \E [Y_\alpha^2] \prod_{I \cap J} \E [Y_\alpha^2]} \\
    &= \left(p_n (1 - p_n)\right)^{|I| + |J| - |I \cup J|} q_n^{|I^{(2)}| + |J^{(2)}| - |I^{(2)} \cup J^{(2)}|}\le 1.\\
  \end{align*}
  Let us note that for all $a$, $W_{I \sminus \{a\}}$ is non-zero with the definition of $N_G - \excond{N_G}{Z}$. Let $W_I = w_I \prod_{i \in I} X_i$, then for $a \in I \cap J$:
    \begin{align*}
        \excond{W_I W_J}{\mathcal{G}_a} &= w_I w_J \prod_{i \in I \sminus \{a\}} Y_i \prod_{j \in J \sminus \{a\}} Y_j \excond{Y_a^2}{Z} \\
        &=  \frac{w_I w_J}{w_{I \sminus \{a\}} w_{J \sminus \{a \}} } \excond{Y_a^2}{Z} W_{I \sminus \{a\}} W_{J \sminus \{a \}} \\
        &= C_{I, J, a} W_{I \sminus \{a\}} W_{J \sminus \{a \}},
    \end{align*}
    with
    \begin{equation*}
      C_{I, J, a} = \frac{w_I w_J}{w_{I \sminus \{a\}} w_{J \sminus \{a \}} }  \excond{Y_a^2}{Z} < +\infty\quad \mathbb{P}\text{-almost surely}.
    \end{equation*}
\end{proof}

We deduce those convergence rates for $p_n < c < 1$ for some $c$.
\begin{theorem}
    \label{thm-discrete-mall:hypergraph-condindep-threshold}
    Let $G$ a hypergraph without isolated vertices. Then, we have
    \begin{equation}
      d_{W}(\tilde N_G, \Normal(0, 1)) \lesssim \left( \min_{\substack{H \subset G \\ e_H > 1}} \{n^{v_H} p_n^{e_H} \} \right)^{-1/2}
  \end{equation}
  and
    \begin{equation}
      \label{eq-discrete-mall:hypergraph-condindep-threshold}
        d_{W}  (\tilde N_G, \Normal(0, 1))  \lesssim
        \left( \min_{\substack{H \subset G \\ e_H > 1}} \{n^{v_H} p_n^{e_H} q_n^{e^{(2)}_H}\} \right)^{-1/2},
    \end{equation}
    where $e^{(2)}_H$ is the number of edges included in the hyperedges of $H$.
  \end{theorem}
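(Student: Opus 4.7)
The plan is to apply Theorem~\ref{thm-motif_estimation:dejong-quant-hypergraph} to $\tilde N_G$, which reduces the claim to the two-sided estimate
\begin{equation*}
\sum_{(I,J,K,L)\textnormal{ connected}}\bigl|\E[W_I W_J W_K W_L]\bigr| \;\lesssim\; \frac{\var[\tilde N_G]^{2}}{\Phi},
\qquad \Phi:=\min_{\substack{H\subset G\\ e_H>1}} n^{v_H}p_n^{e_H}q_n^{e_H^{(2)}}.
\end{equation*}
Since $\mathbb{G}^{(3)}(n,p_n)=\mathbb{T}^{(3)}(n,1,p_n)$, the first inequality of the theorem follows from the second by specializing $q_n=1$, so I focus on the conditionally independent model $\mathbb{T}^{(3)}(n,q_n,p_n)$.

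For the variance I use the chaos decomposition \eqref{eq-discrete-mall:N_G-decomposition}. Orthogonality of distinct chaos components together with $\E[Y_\alpha^{2}|Z]=p_n(1-p_n)\indicator_{\alpha^{(2)}\subset Z}$ and conditional independence gives
\begin{equation*}
\var[\tilde N_G]=\sum_{k=1}^{e_G}\sum_{|J|=k} w_J^{2}\,(p_n(1-p_n))^{k}\,q_n^{|J^{(2)}|}.
\end{equation*}
Grouping the $J$'s by the isomorphism type $H\subset G$ of the sub-hypergraph they span (there are $\Theta(n^{v_H})$ such $J$'s, each with $w_J\asymp n^{v_G-v_H}$), this yields $\var[\tilde N_G]\asymp (\E N_G)^{2}\sum_{H\subset G,\,e_H\ge 1}(n^{v_H}p_n^{e_H}q_n^{e_H^{(2)}})^{-1}\asymp (\E N_G)^{2}/\Phi$, the last relation holding in the regime in which the $e_H>1$ terms dominate.

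For the numerator, $\E[W_IW_JW_KW_L]$ vanishes unless every hyperedge of $M:=I\cup J\cup K\cup L$ appears in at least two of the four sets (a direct consequence of $\E[Y_\alpha|Z]=0$). When non-zero, the bound $|\E[Y_\alpha^{n_\alpha}|Z]|\le p_n\indicator_{\alpha^{(2)}\subset Z}$ for $n_\alpha\ge 2$ gives
\begin{equation*}
\bigl|\E[W_IW_JW_KW_L]\bigr|\lesssim w_Iw_Jw_Kw_L\,p_n^{|M|}q_n^{|M^{(2)}|}.
\end{equation*}
I then classify connected quadruples by the ordered triple of non-empty sub-hypergraphs $H_1,H_2,H_3\subset G$ along which $J,K,L$ are successively glued to $I$, and by an adapted version of the combinatorial counting argument of \cite{janson2000random} show that the sum over positions factorizes as $(\E N_G)^{4}$ times three independent overlap sums, each of order $\Phi^{-1}$. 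This gives $\sum\lesssim (\E N_G)^{4}/\Phi^{3}$, and combining with the variance estimate yields $d_W(\tilde N_G,\Normal(0,1))^{2}\lesssim \Phi^{-1}$.

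The main obstacle is this last combinatorial step: one must carefully track the three successive overlap sub-hypergraphs and verify that each contributes a factor $\Phi^{-1}$, while correctly tallying vertices, hyperedges and their induced edges (the $e_H^{(2)}$ terms coming from $\indicator_{\alpha^{(2)}\subset Z}$) without over-counting across the gluings. Conceptually this is the De~Jong--Janson subgraph-counting lemma \cite{de1996central,janson2000random}, but its extension to $3$-uniform hypergraphs with a latent Erd\"os--R\'enyi graph requires following both the $p_n$ and the $q_n$ powers simultaneously, and it is only at this stage that the restriction $e_H>1$ in the definition of $\Phi$ is needed to close the recursion.
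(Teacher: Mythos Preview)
Your overall strategy matches the paper's: both invoke Theorem~\ref{thm-motif_estimation:dejong-quant-hypergraph} and reduce to bounding the connected fourth-moment sum against the square of the (unnormalised) variance. The decisive combinatorial step, however, is handled differently, and the paper's route avoids precisely the obstacle you flag.

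The paper does not attempt your factorisation into three successive overlap sums. Instead it groups connected quadruples $(I,J,K,L)$ by the isomorphism type $(A,B,C,D)$ of the sub-hypergraphs of $G$ they realise, bounds the contribution of each type by $w_Iw_Jw_Kw_L\,n^{v(A\cup B\cup C\cup D)}p_n^{|A\cup B\cup C\cup D|}q_n^{|A^{(2)}\cup\cdots\cup D^{(2)}|}$, and lower-bounds $\var^2$ for that same type by repeated use of $a^2+b^2\ge 2ab$ (rather than your global asymptotic $\var\asymp(\E N_G)^2/\Phi$). It then appeals directly to Lemma~9 of \cite{de1996central}: for every connected quadruple of sub-collections of $G$ there exist two (not both empty) sub-collections $M,M'\subset G$ with $v(M)+v(M')=\sum v-2v(\cup)$, $|M|+|M'|=\sum|\cdot|-2|\cup|$, and the analogous identity for $e^{(2)}$, so the ratio for each type is exactly $(n^{v_M+v_{M'}}p_n^{|M|+|M'|}q_n^{|M^{(2)}|+|M'^{(2)}|})^{-1/2}\le\Phi^{-1}$. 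This sidesteps the gluing recursion entirely.

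Your route is in the spirit of the Janson/Barbour--Karo\'nski--Ruci\'nski counting argument, but the claim that the sum ``factorises as $(\E N_G)^4$ times three independent overlap sums'' is where the gap lies: when $K$ is glued to $I\cup J$, its overlap need not be isomorphic to a sub-hypergraph of $G$ (it can straddle $I$ and $J$), so the three overlaps are not independent draws from $\{H\subset G\}$ and the recursion does not close as stated. De Jong's lemma is exactly the device that packages this dependency into the pair $(M,M')$, and invoking it would repair the step you correctly identified as the main obstacle.
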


\begin{proof}[Proof of theorem~\ref{thm-discrete-mall:hypergraph-condindep-threshold}]
  We are left to upper bound the quantity:
    \begin{multline*}
        \sum_{(I, J, K, L) \textnormal{ connected}} |\E [W_I W_J W_K W_L]| \\
         \propto_Z \sum_{(I, J, K, L) \textnormal{ connected}}  \left| \E \left[ \excond{\prod_{\alpha \in I} Y_\alpha \prod_{\alpha \in J} Y_\alpha \prod_{\alpha \in K} Y_\alpha \prod_{\alpha \in L} Y_\alpha}{Z}   \right] \right|
    \end{multline*}
    where the notation $\propto_Z$ accounts for an equality up to a factor depending only on $Z$.
    The terms are non-zero if and only if $\alpha$ lies in at least two elements of the quadruple, i.e. if $\alpha$ does not lie in $I \setminus (J \cup K \cup L)$, etc. Then, the number of non-zero terms is $I \cup J \cup K \cup L$.
    We recall that:
    \begin{align*}
      \E [Y_\alpha|Z] &= 0 \\
      \E [Y_\alpha^2|Z] &= p_n (1 - p_n) \ind{\alpha^{(1)} \in Z} \prod_{i=1}^3\ind{\alpha^{(i)} \in Z} \\
      \E [Y_\alpha^3 |Z] &= p_n (1-p_n) (1-2p_n)\prod_{i=1}^3\ind{\alpha^{(i)} \in Z} \lesssim p_n (1-p_n)\prod_{i=1}^3\ind{\alpha^{(i)} \in Z}\\
      \E [Y_\alpha^4 |Z ] &= p_n (1-p_n)(1-3p_n(1-p_n)) \prod_{i=1}^3\ind{\alpha^{(i)} \in Z}\lesssim p_n(1-p_n) \prod_{i=1}^3\ind{\alpha^{(i)} \in Z}.
    \end{align*}
    Now, we remark $I, J, K, L$ are respectively isomorphic to $A, B, C, D$ subhypergraphs of $G$.
    Hence, we can sum first over $(A, B, C, D)$, and then over all the quadruples $(I, J, K, L)$ whose components are respectively isomorphic to the ones of the fixed quadruple $(A, B, C, D)$.
    We shall write:
    \begin{equation*}
        \sum_{I, J, K, L} \cdot = \sum_{A, B, C, D \subset G} \sum_{\substack{I \simeq A, J, \simeq B \\ K \simeq C, L \simeq D}} \cdot := \sum_{A, B, C, D} \sum_{I, J, K, L}^{*A, B, C, D} \cdot
    \end{equation*}
    $v(A)$ denotes the number of vertices in $A$.
    We have that $|\{I, J, K, L \in \binom{[n]}{r}:\: I \simeq A, J \simeq B, K \simeq C, L \simeq D\}|$ is bounded by the number of collection of vertices of cardinal $v(A \cup B \cup C \cup D)$. By a counting argument,
    we see that is of order $n^{v(A \cup B \cup C \cup D)}$. Because $(I, J, K, L)$ is connected and copies of subhypergraphs of $G$, we also have that $|I \cup J \cup K \cup L| = |A \cup B \cup C \cup D|$ and $|I^{(2)} \cup J^{(2)} \cup K^{(2)} \cup L^{(2)}| = |A^{(2)} \cup B^{(2)} \cup C^{(2)} \cup D^{(2)}|$. Hence, for a fixed connected quadruple $(I, J, K, L)$ associated to $(A, B, C, D)$,
    \begin{multline*}
        |\E [W_I W_J W_K W_L]| \\
        \lesssim  w_I w_J w_K w_L \, n^{v(A \cup B \cup C \cup D)} p_n^{|A \cup B \cup C \cup D|} q_n^{|A^{(2)} \cup B^{(2)} \cup C^{(2)} \cup D^{(2)}|}.\\
    \end{multline*}
    Let us bound the variance of $N_G - \excond{N_G}{\mathbb{G}(n, q_n)}$:
        \begin{align*}
          \var^2[N_G - \excond{N_G}{\mathbb{G}(n, q_n)}] &= \left(\sum_{I \cap J \ne \emptyset}\E [W_I W_J] \right)^2 = \sum_{I \cap J \ne \emptyset}\left( \E [W_I^2] + \E [W_J^2] \right)^2 \\
        &=  \frac{1}{2^2}  \sum_{\substack{A , B \subset G \\ A \cap B \neq \emptyset}} \left(\sum_{I}^{*A}\E [W_I^2] + \sum_{J}^{*B}\E [W_J^2] \right)^2. \\
        \end{align*}
        For a fixed connected quadruple $(A, B, C, D)$, by applying repeatedly the inequality $a^2 + b^2 \ge 2 ab$, we get:
        \begin{multline*}
          \var^2[N_G - \excond{N_G}{\mathbb{G}(n, q_n)}] \\
          \begin{aligned}
        &\ge \frac{1}{16} \left(\sum_{I}^{*A}\E [W_I^2] + \sum_{J }^{*B}\E [W_J^2] + \sum_{K }^{*C}\E [W_K^2] +  \sum_{L}^{*D}\E [W_L^2]  \right)^2\\
        &\ge \frac{1}{16} \left(\sum_{I}^{*A}\E [W_I^2] \times \sum_{J }^{*B}\E [W_J^2] \times \sum_{K }^{*C}\E [W_K^2] \times  \sum_{L}^{*D}\E [W_L^2]\right)^{1/2}.
        \end{aligned}
    \end{multline*}
  Then using that $\E[W_I^2] = w_I^2 q_n^{|I^{(2)}|}(1 - p_n)^{|I|}p_n^{|I|} = q_n^{|A^{(2)}|} (1 - p_n)^{|A|}p_n^{|A|}$, so
  \begin{equation*}
    \sum_{I}^{*A}\E [W_I^2] =  \sum_{I}^{*A} w_I^2 (1 - p_n)^{|A|}p_n^{|A|} q_n^{|A^{(2)}|}= n^{v(A)} (1 - p_n)^{|A|}p_n^{|A|} q_n^{|A^{(2)}|}.
  \end{equation*}

  In particular, one has for a fixed quadruple $(I, J, K, L)$ and associated $(A, B, C, D)$:
  \begin{multline}
    \label{eq-discrete-mall:var-square}
    \var^2[N_G - \excond{N_G}{\mathbb{G}(n, q_n)}] \geq  \\
    \frac{1}{16} w_I w_J w_K w_L  \left(n^{v^*(A, B, C, D)} (p_n(1 - p_n))^{e^*(A, B, C, D)}q_n^{e^{(2)*}(A, B, C, D)}\right)^{1/2}
  \end{multline}
  where $v^*(A, B, C, D) = v(A) + v(B) + v(C)+v(D)$ and $e^*(A, B, C, D) = |A| + |B| + |C| + |D|$ and $e^{(2)*}(A, B, C, D) = |A^{(2)}| + |B^{(2)}| + |C^{(2)}|+|D^{(2)}|$.
  %
  It yields the result.
  %
  %
  Using the Lemma~9 of \cite{de1996central}, for any quadruple $(I, J, K, L)$ of collections $I, J, K, L$ isomorphic to (sub)collections of $H$, there exists two (not both empty) subcollections of $G$, say $M$ and $M'$, which may contain a nonzero number of isolated vertices, say $i_M$ and $i_{M'}$, such that
  \begin{equation}
    v(M) + v(M') + i_M + i_{M'} = v(I) + v(J) + v(K) + v(L) - 2 v(I \cup J \cup K \cup L),
  \end{equation}

  \begin{equation}
    |M^{(2)}| + |M'^{(2)}| = |I^{(2)}| + |J^{(2)}| + |K^{(2)}| + |L^{(2)}| - 2|I^{(2)} \cup J^{(2)} \cup K^{(2)} \cup L^{(2)}|
  \end{equation}
  and by extension:
  \begin{equation}
    |M| + |M'| = |I| + |J| + |K| + |L| - 2|I \cup J \cup K \cup L|.
  \end{equation}
  As $G$ does not have isolated vertices, so do $M$ and $M'$.
  As $M$ and $M'$ are subcollections of $G$, their average degree does not exceed $m(G)$. Hence, by theorem~\ref{thm-motif_estimation:dejong-quant-hypergraph},
  \begin{equation*}
    d_{W} (\tilde N_G, \Normal (0, 1)) \lesssim (n^{v(M) + v(M')} p_n^{|M| + |M'|} q_n^{|M^{(2)}| + |M'^{(2)}| })^{1/2}.
  \end{equation*}
   Thus, \eqref{eq-discrete-mall:hypergraph-condindep-threshold} follows.
  The first result for $M_G$ is obtained with $q_n = 1$.
\end{proof}

\begin{remark}
  This bound is relevant only for the regime $p_n \xrightarrow{n \to \infty} 0$.
\end{remark}
%
The Malliavin structure for conditionally independent random variables yields a chaos decomposition and rates of normal convergence of the conditionally centered statistic given $Z$.

\subsection{A modified Hoeffding decomposition}
In that section,
we readopt the notations of the previous chapter by denoting $A$ the index set of the random variables. Let another set $\hat A$ that index auxiliary random variables in addition to $(\hat X_\beta)_{\beta \in \hat A}$.
We shall write the sequence of conditionally independent random variables given $Z$,
$\mathsf{X} = (X_{\alpha}, \ldots, \hat X_\beta, \ldots)_{\alpha \in A, \beta \in \hat A}$ where the subsequence $(\hat X_{\beta})_{\beta \in \hat A}$ is a sequence of independent random variables, and $\sigma(Z) = \sigma(\hat X_a, a \in A)$. This setting is new to the best of our knowledge, and is specifically tailored for the application in $\mathbb{T}^{(3)}(n, q_n, p_n)$. We assume that $A$ is the set of $3$-hyperedges, $\hat A$ is the set of edges included in the hyperedges of $A$ and
\begin{equation}
  \label{eq-discrete-mall:cond_var_cascade}
  X_\alpha = g(U_\alpha) \prod_{b \subset \alpha} \hat X_b
\end{equation}
where $(U_\alpha)_{\alpha \in A}$ forms a sequence of conditionally independent random variables given $\hat X$, following the uniform distribution.
\begin{lemma}
  The sequence $\mathsf{X}$ is a sequence of conditionally independent random variables.
\end{lemma}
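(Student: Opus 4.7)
The plan is to verify conditional independence of $\mathsf{X}$ given $Z$ by splitting each finite collection of indices into its $A$-part and its $\hat A$-part, and showing that the joint conditional law factors along that split. The key observation is that under the standing assumption $\sigma(Z) = \sigma(\hat X_b, b \in \hat A)$, each variable $\hat X_\beta$ is itself $\sigma(Z)$-measurable, so its conditional distribution given $Z$ is the Dirac mass at $\hat X_\beta$. Thus the $\hat A$-indexed coordinates are ``degenerate'' after conditioning on $Z$, and all the work is concentrated on the $A$-indexed coordinates.

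First I would establish that $(X_\alpha)_{\alpha \in A}$ is conditionally independent given $Z$. From \eqref{eq-discrete-mall:cond_var_cascade} we write $X_\alpha = h_\alpha(U_\alpha, \hat X)$ with $h_\alpha(u, \hat x) := g(u) \prod_{b \subset \alpha} \hat x_b$. Since $(U_\alpha)_{\alpha \in A}$ are conditionally independent given $\hat X$, and $\sigma(\hat X) = \sigma(Z)$, conditioning on $Z$ freezes the coefficient $\prod_{b \subset \alpha} \hat X_b$ and reduces $X_\alpha$ to a measurable function of the single variable $U_\alpha$. Measurable functions applied coordinatewise to a conditionally independent family (with coefficients measurable with respect to the conditioning $\sigma$-algebra) preserve conditional independence; hence $(X_\alpha)_{\alpha \in A}$ inherits conditional independence given $Z$ from $(U_\alpha)_{\alpha \in A}$.

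Second, for any finite $I = I_A \sqcup I_{\hat A} \subset A \sqcup \hat A$ and Borel sets $(B_c)_{c \in I}$, I would compute
\begin{align*}
  \mathbb{P}\Bigl(X_\alpha \in B_\alpha\ \forall \alpha \in I_A;\ \hat X_\beta \in B_\beta\ \forall \beta \in I_{\hat A}\,\Bigm|\, Z\Bigr)
  &= \Bigl(\prod_{\beta \in I_{\hat A}} \ind{\hat X_\beta \in B_\beta}\Bigr)\, \mathbb{P}\bigl(X_\alpha \in B_\alpha,\, \alpha \in I_A \,\bigm|\, Z\bigr) \\
  &= \prod_{\alpha \in I_A} \mathbb{P}(X_\alpha \in B_\alpha \,|\, Z) \prod_{\beta \in I_{\hat A}} \mathbb{P}(\hat X_\beta \in B_\beta \,|\, Z),
\end{align*}
where the first equality uses that each $\ind{\hat X_\beta \in B_\beta}$ is $\sigma(Z)$-measurable (and therefore equals $\mathbb{P}(\hat X_\beta \in B_\beta \,|\, Z)$ as well as factoring out of the conditional expectation), and the second equality uses the conditional independence of the $X_\alpha$'s established in the previous step. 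This is exactly the definition of conditional independence of $\mathsf{X}$ given $Z$.

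The argument presents no substantive obstacle; the only point requiring care is the mixed indexing over $A \sqcup \hat A$, which is handled cleanly by noting that the $\hat A$-coordinates are $\sigma(Z)$-measurable and hence trivialize upon conditioning, so that conditional independence of $\mathsf{X}$ reduces to the (already established) conditional independence of the $(X_\alpha)_{\alpha \in A}$ sub-family.
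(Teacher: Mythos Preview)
Your proposal is correct and rests on the same key observation as the paper: since $\sigma(Z)=\sigma(\hat X_b,\,b\in\hat A)$, each $\hat X_\beta$ is $\sigma(Z)$-measurable and hence degenerates to a Dirac mass after conditioning on $Z$, so the whole question reduces to the conditional independence of $(X_\alpha)_{\alpha\in A}$, which follows from $X_\alpha=g(U_\alpha)\prod_{b\subset\alpha}\hat X_b$ and the conditional independence of the $U_\alpha$ given $\hat X$.

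The only methodological difference is that the paper verifies conditional independence through pairwise conditional-expectation identities of the form $\excond{f(\hat X_\beta)}{\hat X_\alpha,Z}=\excond{f(\hat X_\beta)}{Z}$, $\excond{f(X_\alpha)}{\hat X_\beta,Z}=\excond{f(X_\alpha)}{Z}$, etc., whereas you go directly to the factorization of the finite-dimensional conditional law $\mathbb{P}(\,\cdot\mid Z)$. Your route is arguably cleaner and more complete, since the paper's pairwise checks do not by themselves yield the full ``one-versus-all-others'' statement $\mathbb{P}(\mathsf X_c\in\cdot\mid \mathsf X^{\{c\}},Z)=\mathbb{P}(\mathsf X_c\in\cdot\mid Z)$ without an additional (easy but unstated) argument; your factorization over arbitrary finite $I\subset A\sqcup\hat A$ gives this at once.
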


\begin{proof}
   Since, by assumption, for $f$ bounded and $(\alpha, \beta) \in \hat A^2$ such that $\alpha \neq \beta$:
  \begin{equation*}
    \excond{f(\hat X_\beta)}{\hat X_\alpha, Z} = \excond{f(\hat X_\beta)}{Z},
  \end{equation*}
  the subsequence $(\hat X_\beta)_{\beta \in \hat A}$ is a sequence of conditionally independent random variables given $Z$.
  Let $\alpha \in A$ and $\beta \in \hat A$, by definition
  \begin{equation*}
    \excond{f(X_\alpha)}{\hat X_\beta, Z} = \excond{f(X_\alpha)}{Z},
  \end{equation*}
  and:
  \begin{equation*}
    \excond{f(\hat X_\beta)}{X_\alpha, Z} = \excond{f(\hat X_\beta)}{Z}
  \end{equation*}
  because $\hat X_\beta$ is a function of $Z$.
\end{proof}

\begin{remark}
  That type of sequence is a degenerate case of sequence of conditionally independent random variables since the $\hat X_\alpha$ are constant given $Z$.
\end{remark}

%
For our purpose, the following lemma shows the commutation relation.

\begin{lemma}
  \label{lem-discrete-mall:commute-cond-exp}
  For $F \in L^2(E_A)$ a homogeneous sum of conditionally independent random variables $\mathsf{X}$ and $\alpha \in A$ and $\beta \in \hat A$ such that
 \begin{equation}
  \label{eq-discrete-mall:cond-commut-hatX-X}
    \excond{\excond{F}{\hat X^{\{\beta\}}, X}}{X^{\{\alpha\}}, \hat X} = \excond{\excond{F}{X^{\{\alpha\}}, \hat X}}{X, \hat X^{\{\beta\}}}.
  \end{equation}
\end{lemma}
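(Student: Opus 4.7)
The plan is to prove the commutation relation by disintegration of the relevant conditional laws, in direct analogy with Lemma~\ref{lem-discrete-mall:gradient-commut}. A first reduction exploits the homogeneous-sum assumption: by linearity of conditional expectation it suffices to verify the identity when $F=\prod_{i\in I}\mathsf{X}_{i}$ is a single monomial, so that explicit computation becomes tractable. Decomposing $I=(I\cap A)\cup(I\cap\hat A)$, one may further factor out $\hat X$-factors not involving $\beta$ and $X$-factors not involving $\alpha$, since these are measurable with respect to both conditioning $\sigma$-algebras $\sigma(\hat X^{\{\beta\}},X)$ and $\sigma(X^{\{\alpha\}},\hat X)$.

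The second step is to write both sides as iterated integrals against regular conditional distributions, whose existence is guaranteed by the Polish assumption on the underlying spaces. The conditional independence of $(X_\gamma)_\gamma$ given $Z=\sigma(\hat X)$ yields $\mathbb{P}(\cdot\mid X^{\{\alpha\}},\hat X)=\mathbb{P}(\cdot\mid\hat X)$, and the explicit cascade $X_\alpha=g(U_\alpha)\prod_{b\subset\alpha}\hat X_b$ with $(U_\alpha)$ uniform and independent of $\hat X$ allows a direct Bayesian description of $\mathbb{P}(\hat X_\beta\mid\hat X^{\{\beta\}},X)$. One then obtains
\begin{equation*}
  \mathrm{LHS}=\iint F\,\mathbb{P}(\mathrm{d}\hat x_\beta\mid\hat X^{\{\beta\}},X^{\{\alpha\}},x_\alpha)\,\mathbb{P}(\mathrm{d}x_\alpha\mid\hat X),
\end{equation*}
and the analogous formula for the RHS with the roles of $\hat x_\beta$ and $x_\alpha$ interchanged in the iterated integration.

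The core of the argument is to recognise that both iterated integrals equal the single integral of $F$ against the joint conditional law $\mathbb{P}(\mathrm{d}\hat x_\beta,\mathrm{d}x_\alpha\mid\hat X^{\{\beta\}},X^{\{\alpha\}})$. This is the content of Fubini's theorem applied to the two chain-rule factorisations of that joint: one via $\mathbb{P}(\mathrm{d}x_\alpha\mid\cdot)\,\mathbb{P}(\mathrm{d}\hat x_\beta\mid\cdot,x_\alpha)$ and the other via $\mathbb{P}(\mathrm{d}\hat x_\beta\mid\cdot)\,\mathbb{P}(\mathrm{d}x_\alpha\mid\cdot,\hat x_\beta)$. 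Both factorisations are well-defined for the same joint measure, so the symmetric evaluation of the Fubini integral yields the equality of LHS and RHS.

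The main obstacle is the Bayesian coupling between $\hat X_\beta$ and $X_\alpha$ that enters whenever $\beta\subset\alpha$, because conditioning on $X_\alpha$ carries information about $\hat X_\beta$ through the cascade factor $\prod_{b\subset\alpha}\hat X_b$. For monomial $F$, however, substituting~\eqref{eq-discrete-mall:cond_var_cascade} inside the integrand reexpresses each side purely in terms of the fully independent underlying variables $(U_\gamma)_\gamma$ and $(\hat X_\delta)_\delta$, at which point the joint law is a genuine product measure and Fubini applies cleanly, delivering the desired identity.
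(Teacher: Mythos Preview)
Your reduction to monomials by linearity is correct and matches the paper. The problem is the middle part of your argument: you claim that both iterated conditional expectations equal the single integral of $F$ against $\mathbb{P}(\mathrm{d}\hat x_\beta,\mathrm{d}x_\alpha\mid \hat X^{\{\beta\}},X^{\{\alpha\}})$, i.e.\ $\excond{F}{\hat X^{\{\beta\}},X^{\{\alpha\}}}$. This cannot follow from an abstract disintegration/Fubini argument. Note the measurability mismatch: the left-hand side of \eqref{eq-discrete-mall:cond-commut-hatX-X} is $\sigma(X^{\{\alpha\}},\hat X)$-measurable, hence may depend on $\hat X_\beta$, whereas your claimed common value is $\sigma(\hat X^{\{\beta\}},X^{\{\alpha\}})$-measurable and does not. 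Concretely, your chain-rule factorisation requires $\mathbb{P}(\mathrm{d}x_\alpha\mid \hat X^{\{\beta\}},X^{\{\alpha\}})=\mathbb{P}(\mathrm{d}x_\alpha\mid \hat X)$, but when $\beta\subset\alpha$ the cascade $X_\alpha=g(U_\alpha)\hat X_\beta\prod_{b\subset\alpha,\,b\ne\beta}\hat X_b$ makes the right-hand law genuinely depend on $\hat X_\beta$, so these kernels differ. In other words, the two conditioning $\sigma$-algebras do \emph{not} commute as projections on all of $L^2$; the identity \eqref{eq-discrete-mall:cond-commut-hatX-X} is a special feature of homogeneous sums, not a consequence of Fubini.

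Your final paragraph is where the actual proof lives: substituting \eqref{eq-discrete-mall:cond_var_cascade} and computing both sides explicitly in terms of the independent family $(U_\gamma,\hat X_\delta)$. That is precisely what the paper does---it writes out $\excond{F}{X^{\{\alpha\}},\hat X}$ and $\excond{F}{\hat X^{\{\beta\}},X}$ explicitly for $F=X_\alpha X_{\alpha_1}\cdots X_{\alpha_n}$, then applies the second conditioning to each and checks equality by inspection. You should drop the abstract Fubini narrative and make this direct computation the body of the proof rather than a closing remark; as it stands, the proposal reads as an argument that does not work, patched at the end by a sketch of the argument that does.
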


\begin{proof}[Proof of lemma~\ref{lem-discrete-mall:commute-cond-exp}]
    It suffices to consider functionals of the type:
    \begin{equation*}
      X_{\alpha} X_{\alpha_1} \ldots X_{\alpha_n}
    \end{equation*}
    for $n \ge 1$.
    If $\beta$ is not included in the edges of $\alpha$, we have the property by independence of the associated random variables.

    Let consider the case where $\beta$ is one of the edge of $\alpha$.

\begin{equation*}
  \excond{F}{X^{\{\alpha\}}, \hat X} = \E [g(U_\alpha)]\prod_{b \subset \alpha} \hat X_{b} \prod_{i=1}^n X_{\alpha_i}
\end{equation*}
and
    \begin{equation*}
      \excond{F}{\hat X^{\{\beta\}}, X} = g(U_\alpha) \E \left[\hat X_\beta^{1+\sum_{i=1}^n \ind{\beta \in \alpha_i}} \right]\prod_{i=1}^{n} \prod_{b \subset \alpha_i}\hat X_{b}.
    \end{equation*}
Then,
    \begin{align*}
      \excond{\excond{F}{\hat X^{\{\beta\}}, X}}{X^{\{\alpha\}}, \hat X}
        &= \E [g(U_\alpha)] \E \left[\hat X_\beta^{1+\sum_{i=1}^n \ind{\beta \in \alpha_i}} \right]\prod_{i=1}^{n} \prod_{b \subset \alpha_i}\hat X_{b}.\\
        &= \excond{\excond{F}{X^{\{\alpha\}}, \hat X}}{X, \hat X^{\{\beta\}}}.\\
    \end{align*}
\end{proof}

Those commutation relations of lemma~\ref{lem-discrete-mall:commute-cond-exp} entail a modified Hoeffding decomposition of functionals of Bernoulli random variables.

\begin{lemma}
  \label{lem-discrete-mall:modified-chaos}
  Given $\mathsf X$, the modified chaos decomposition is given by:
  \begin{equation*}
    F = \E [F] + \sum_{n=1}^{+\infty} \pi_n (F)
  \end{equation*}
  with
  \begin{equation}
    \label{eq-discrete-mall:projectors-chaos-cascade}
    \pi_n (F) = \sum_{\substack{I \subset A \cup \hat A \\ |I| = n}} \left(\prod_{b \in I} D_b\right) \left(\prod_{c \in (A \cup \hat A) \sminus I} \E [ \cdot | \mathcal{G}^c] \right)
  \end{equation}
  with $\mathcal{G}^c = \sigma(\mathsf{X}^{\{c\}})$.
\end{lemma}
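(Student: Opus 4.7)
The plan is to replicate the proof of the chaos decomposition of Section~\ref{sec:chaos-decomp}, now applied to the joint sequence $\mathsf{X}$ viewed as indexed by $A \cup \hat A$, with $\mathcal{G}^c = \sigma(\mathsf{X}^{\{c\}})$ playing the role of the conditioning $\sigma$-algebra at index $c$ and no remaining exogenous latent variable. The zeroth-order term should collapse to $\E[F]$ rather than $\excond{F}{Z}$ precisely because $\hat X$ is now part of the indexed sequence, so iterating $\E[\,\cdot\,|\mathcal{G}^c]$ over all $c \in A \cup \hat A$ eventually integrates every variable out. The single non-trivial ingredient is the pairwise commutation of the operators $\E[\,\cdot\,|\mathcal{G}^c]$ and $\E[\,\cdot\,|\mathcal{G}^{c'}]$ for distinct $c, c' \in A \cup \hat A$; once this is settled, the rest is formal telescoping.

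First I would establish the commutation, in three cases. For $c, c' \in A$, observe that $\sigma(\mathsf X^{\{c\}}) \supset \sigma(\hat X) = \sigma(Z)$, so $\mathcal{G}^c$ coincides here with the $\sigma$-algebra $\sigma(X^{\{c\}}) \vee \sigma(Z)$ used in Lemma~\ref{lem-discrete-mall:gradient-commut}; the Fubini-type argument of that lemma applies verbatim via the conditional independence of $X_c$ and $X_{c'}$ given $Z$. The cross case $c \in A$, $c' \in \hat A$ is precisely the content of Lemma~\ref{lem-discrete-mall:commute-cond-exp}. For $c, c' \in \hat A$, I would check the commutation on monomials of the form $\prod_{i} X_{\alpha_i} \prod_j \hat X_{\gamma_j}$, which span a dense subspace of $L^2$; the explicit cascade structure~\eqref{eq-discrete-mall:cond_var_cascade} reduces the computation to one of the same type as Lemma~\ref{lem-discrete-mall:commute-cond-exp}, with the roles of $\alpha$ and $\beta$ interchanged.

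From pairwise commutation, the gradients $D_c := \operatorname{Id} - \E[\,\cdot\,|\mathcal{G}^c]$ satisfy $D_c^{\,2} = D_c$ and $D_c D_{c'} = D_{c'} D_c$, just as in Lemma~\ref{lem-discrete-mall:gradient-commut}. For any finite subset $B_m \subset A \cup \hat A$ exhausting the index set, distributivity in
\begin{equation*}
  \operatorname{Id} = \prod_{c \in B_m}\bigl(D_c + \E[\,\cdot\,|\mathcal{G}^c]\bigr)
\end{equation*}
yields $\operatorname{Id} = \sum_{n=0}^{|B_m|} \pi_n^m$, where $\pi_n^m$ is the restriction to $B_m$ of the expression~\eqref{eq-discrete-mall:projectors-chaos-cascade}. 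Commutation then turns the $\pi_n^m$ into pairwise orthogonal projectors, and the uniform boundedness argument used in Section~\ref{sec:chaos-decomp} carries over unchanged to pass to the limit $m \to +\infty$. The zeroth component is $\prod_{c \in A \cup \hat A} \E[\,\cdot\,|\mathcal{G}^c]\, F = \E[F]$, since every random variable is eventually integrated out.

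The main obstacle is the within-$\hat A$ commutation. It is \emph{not} implied by the independence of $(\hat X_\beta)_{\beta \in \hat A}$ alone, because a generic $F$ couples $\hat X$ and $X$ through~\eqref{eq-discrete-mall:cond_var_cascade}: conditioning on $(X, \hat X^{\{\beta\}})$ creates non-trivial dependence between $\hat X_\beta$ and $\hat X_{\beta'}$ through the observed values of $X$. The argument therefore has to exploit the product form $X_\alpha = g(U_\alpha)\prod_{b\subset\alpha}\hat X_b$ in the same spirit as Lemma~\ref{lem-discrete-mall:commute-cond-exp}, which is why the commutation is verified only on the dense class of monomials and then extended by continuity.
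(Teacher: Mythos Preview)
Your proposal is correct and follows essentially the same route as the paper: redefine the gradient over the enlarged index set $A\cup\hat A$ with $\mathcal{G}^c=\sigma(\mathsf{X}^{\{c\}})$ and rerun the chaos-decomposition argument of Section~\ref{sec:chaos-decomp}, so that $\pi_0$ collapses to the unconditional expectation. The paper's own proof is terser---it only spells out the cross commutation (Lemma~\ref{lem-discrete-mall:commute-cond-exp}) and otherwise writes ``follow the same scheme''---whereas you explicitly catalogue all three commutation cases and correctly flag the within-$\hat A$ case as requiring the cascade structure~\eqref{eq-discrete-mall:cond_var_cascade} rather than mere independence of $(\hat X_\beta)$; this is a genuine refinement of the paper's argument rather than a different approach.
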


\begin{proof}[Proof of lemma~\ref{lem-discrete-mall:modified-chaos}]
  We redefine a gradient $D$ and Ornstein-Uhlenbeck operator $\mathsf{L}$ in the same fashion as in subsection~\ref{sec:mall-operators} such that for $a \in A \cup \hat A$:
  \begin{equation*}
    D_a F = F - \excond{F}{\mathsf{X}^{\{a\}}}.
  \end{equation*}
  Then, we follow the same scheme of proof as lemma~\ref{lem-discrete-mall:chaos-decomposition} with that modified gradient.
  Thus, we obtain that $\ker \mathsf{L} = \{F \in \dom{\mathsf{L}} \: : \: \E [F] = 0 \}$ and \eqref{eq-discrete-mall:projectors-chaos-cascade}.

\end{proof}

  The resulting Malliavin framework is analogous to the Malliavin-Dirichlet structure in \cite{decreusefond:hal-01565240}, whose underlying Markov process is the usual Glauber dynamics starting from $\mathsf{X}$. It extends the scope to a particular type of sequences of conditionally independent random variables.
 That applies to $N_G$.
 We recall that in the application to motif estimation, $Z$ is the underlying Erdös-Rényi random graph $\mathbb{G}(n, q_n)$.  The decomposition is similar to \eqref{eq-discrete-mall:N_G-decomposition} except that this time the decomposition involves the random variables $(\hat X_b)_{b \in \hat A}$. Using the inclusion-exclusion principle,
 \begin{align*}
  \excond{N_G}{\mathbb{G}(n, q_n)}  &= \sum_{\substack{H \in \binom{[n]}{3} \\H \simeq G}} \prod_{\alpha \in H} \excond{X_\alpha}{\mathbb{G}(n, q_n)}
  = \sum_{\substack{H \in \binom{[n]}{3} \\H \simeq G}} \prod_{\alpha \in H} \prod_{\beta \subset \alpha} \hat X_\beta \\
  &=  \sum_{\substack{H \in \binom{[n]}{3} \\H \simeq G}} p_n^{|H|} \prod_{\beta \in H^{(2)}}  ((\hat X_\beta - \E [\hat X_\beta]) + \E[\hat X_\beta])  \\
 \end{align*}
 Hence,
 \begin{equation*}
  \excond{N_G}{\mathbb{G}(n, q_n)} - \E [N_G] =  \sum_{\substack{H \in \binom{[n]}{3} \\H \simeq G}} p_n^{|H|}  \sum_{\emptyset \ne J \subseteq H} q_n^{|H^{(2)}| - |J^{(2)}|} \prod_{\beta \in J^{(2)}} (\hat X_\alpha - q_n).
 \end{equation*}
 It entails that:
 \begin{align*}
  N_G - \E [N_G] &= (N_G - \excond{N_G}{\mathbb{G}(n, q_n)}) + (\excond{N_G}{\mathbb{G}(n, q_n)} - \E [N_G]) \\
  &=  \sum_{\substack{H \in \binom{[n]}{3} \\H \simeq G}} \sum_{\emptyset \ne J \subseteq I} p_n^{|H|-|J|} \ind{(H \sminus J)^{(2)} \subset \mathbb{G}(n, q_n)} \prod_{\alpha \in J} (X_\alpha - \excond{X_\alpha}{\mathbb{G}(n, q_n)}) \\
  &+ \sum_{\substack{H \in \binom{[n]}{3} \\H \simeq G}} p_n^{|H|}  \sum_{\emptyset \ne J \subseteq H} q_n^{|H^{(2)}| - |J^{(2)}|} \prod_{\beta \in J^{(2)}} (\hat X_\alpha - q_n) \\
  &= \sum_{\substack{H \in \binom{[n]}{3} \\H \simeq G}} \sum_{\emptyset \ne J \subseteq I} p_n^{|H|-|J|} \prod_{\beta \in (H \sminus J)^{(2)}} \hat X_\beta \prod_{\alpha \in J} (X_\alpha - \excond{X_\alpha}{\mathbb{G}(n, q_n)}) \\
  &+\sum_{\substack{H \in \binom{[n]}{3} \\H \simeq G}} p_n^{|H|}  \sum_{\emptyset \ne J \subseteq H} q_n^{|H^{(2)}| - |J^{(2)}|} \prod_{\beta \in J^{(2)}} (\hat X_\alpha - q_n) \\
  &= \sum_{\substack{H \in \binom{[n]}{3} \\H \simeq G}} \sum_{\emptyset \ne J \subseteq H} p_n^{|H|-|J|} \prod_{\beta \in (H \sminus J)^{(2)}} \hat Y_\beta \prod_{\alpha \in J} Y_\alpha \\
  &+ \sum_{\substack{H \in \binom{[n]}{3} \\H \simeq G}} \sum_{\emptyset \ne J \subseteq H} p_n^{|H|-|J|} q_n^{(H \sminus J)^{(2)}} \prod_{\alpha \in J} Y_\alpha \\
  &+\sum_{\substack{H \in \binom{[n]}{3} \\H \simeq G}} \sum_{\emptyset \ne J \subseteq H} p_n^{|H|}   q_n^{|H^{(2)}| - |J^{(2)}|} \prod_{\beta \in J^{(2)}} \hat Y_\beta
 \end{align*}
 where $Y_\alpha = X_\alpha - \excond{X_\alpha}{\mathbb{G}(n, q_n)}$ and $\hat{Y}_\beta = \hat X_\beta - \E[\hat X_\beta]$. It can be rewritten as $N_G - \E[N_G] = \sum_{J} W^{(1)}_J + W^{(2)}_J + W^{(3)}_J$ where
 \begin{equation}
  \label{eq-discrete-mall:motif-count-condindep-cascade-hoeffding-term}
  \begin{aligned}
 W^{[1]}_{J} =  p_n^{e_G - |J|} q_n^{e^{(2)}_G - |J^{(2)}|} &\left( \sum_{\substack{H \in \binom{[n]}{3} \\H \simeq G, H \supseteq J}} 1 \right) \prod_{\alpha \in J} Y_\alpha ; \quad
    W^{[2]}_J = p_n^{e_G - |J|} q_n^{e^{(2)}_G - |J^{(2)}|} p_n^{|J|} \prod_{\beta \in J^{(2)}} \hat Y_\beta; \\
    W^{[3]}_J &= p_n^{e_G-|J|} \left(\sum_{\substack{H \in \binom{[n]}{3} \\H \simeq G}}  \prod_{\beta \in (H \sminus J)^{(2)}} \hat Y_\beta \right) \prod_{\alpha \in J} Y_\alpha.\\
  \end{aligned}
 \end{equation}
 We consider the Malliavin structure associated to $\mathsf Y = (Y_{\alpha}, \ldots, \hat Y_\beta, \ldots)_{\alpha \in \binom{[n]}{3}, \beta \in \binom{[n]}{3}}$. Then, for each $J$, there exists $m \in \N$ such that $W^{(i)}_J \in \mathfrak{C}_m$ for $i \in \{1, 2, 3\}$.

\begin{theorem}
  \label{thm-discrete-mall:hypergraph-condindep-threshold-hoeffding}
  Let $G$ a hypergraph without isolated vertices. Then, let $p_n \xrightarrow{n \to + \infty} 0$ and $q_n \xrightarrow{n \to + \infty} 0$:
  \begin{equation}
    d_{W}  (\bar N_G, \Normal(0, 1))  \lesssim \left(\min_{\substack{H \subset G \\ e_H > 1}}  \{n^{v_H} p_n^{e_H} q_n^{e^{(2)}_H}\} \right)^{-1/2}.
\end{equation}
\end{theorem}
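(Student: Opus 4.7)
[Proof plan of Theorem~\ref{thm-discrete-mall:hypergraph-condindep-threshold-hoeffding}]
The strategy is to apply the Quantitative De Jong theorem II (Theorem~\ref{thm-discrete-mall:full-chaos-fourth-connected}) to the modified chaos decomposition
\begin{equation*}
  N_G - \E[N_G] = \sum_J \bigl(W^{[1]}_J + W^{[2]}_J + W^{[3]}_J\bigr)
\end{equation*}
obtained from~\eqref{eq-discrete-mall:motif-count-condindep-cascade-hoeffding-term}, using the Malliavin structure associated to $\mathsf Y = (Y_\alpha, \hat Y_\beta)$ furnished by Lemma~\ref{lem-discrete-mall:modified-chaos}. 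Each $W^{[i]}_J$ is a homogeneous sum in the components of $\mathsf Y$, hence by Lemma~\ref{lem-discrete-mall:homogeneous-sum-eigenfunction} it sits in a single chaos $\mathfrak{C}_m$ and satisfies the eigenfunction condition~\eqref{eq-discrete-mall:eigenfunction}. Hypotheses~\eqref{eq-discrete-mall:hypothesis1} and~\eqref{eq-discrete-mall:hypothesis2} would be checked exactly as in the proof of Theorem~\ref{thm-discrete-mall:hypergraph-condindep-threshold}: the centred moments satisfy $\E[Y_\alpha^k \mid Z] \lesssim p_n \prod_{b \subset \alpha} \ind{b \in Z}$ and $\E[\hat Y_\beta^k] \lesssim q_n$ for $k \ge 2$, so the relevant ratios remain uniformly bounded whenever $p_n, q_n \le 1$.

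The second step is to estimate $\sum_{(I,J,K,L) \text{ connected}} |\E[W_I W_J W_K W_L]|$, where each $W$ now ranges over the three flavours. I would partition the sum according to the pattern in $\{1,2,3\}^4$ selecting one flavour for each entry of the quadruple, and then group the terms by the tuple $(A,B,C,D)$ of isomorphism types of the underlying subhypergraphs of $G$. For each fixed tuple, independence of the $Y_\alpha$'s and $\hat Y_\beta$'s factorises the expectation, and the centring of each factor forces every participating index to appear in at least two of $I, J, K, L$; the number of such labelled quadruples is bounded by $n^{v(A \cup B \cup C \cup D)}$. Combined with the $p_n^{e_G - |J|} q_n^{e^{(2)}_G - |J^{(2)}|}$ prefactor of each $W^{[i]}_J$, and with the matching lower bound for $\var[N_G]$ produced by the same computation as in~\eqref{eq-discrete-mall:var-square}, the whole ratio reduces to a monomial in $n$, $p_n$, $q_n$ whose exponents are explicit combinatorial functions of $(A,B,C,D)$.

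Finally, I would invoke Lemma~9 of~\cite{de1996central} in its hypergraph form: any connected quadruple of copies of subhypergraphs of $G$ may be reorganised into a pair $M, M' \subset G$ of subhypergraphs without isolated vertices whose joint vertex-, hyperedge- and edge-counts match the residual exponents of $n$, $p_n$ and $q_n$ in the estimate above. Each factor is then of the form $n^{v_H} p_n^{e_H} q_n^{e^{(2)}_H}$ for some $H \subset G$ with $e_H > 1$ and no isolated vertex, so the product is bounded below by the square of $\min_{H \subset G, e_H > 1} n^{v_H} p_n^{e_H} q_n^{e^{(2)}_H}$, which yields the announced bound after taking the square root in Theorem~\ref{thm-discrete-mall:full-chaos-fourth-connected}. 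The main obstacle will be the combinatorial bookkeeping of the mixed quadruples involving all three flavours $W^{[1]}, W^{[2]}, W^{[3]}$: one must check that the extra edge-level factors $\hat Y_\beta$ introduced by $W^{[2]}$ and $W^{[3]}$ contribute only edges already included in the hyperedges of $G$, so that the de Jong reduction stays inside the subhypergraph lattice of $G$ and the exponent matching with $\var[N_G]$ is preserved across all flavour patterns.
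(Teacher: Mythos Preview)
Your proposal is correct and follows essentially the same route as the paper: apply Theorem~\ref{thm-discrete-mall:full-chaos-fourth-connected} to the modified decomposition $N_G-\E[N_G]=\sum_J(W_J^{[1]}+W_J^{[2]}+W_J^{[3]})$ in the Malliavin structure of Lemma~\ref{lem-discrete-mall:modified-chaos}, partition the connected-quadruple sum over the flavour patterns in $\{1,2,3\}^4$, bound each pattern uniformly by $n^{v(\cup)}p_n^{e(\cup)}q_n^{e^{(2)}(\cup)}$ using the moment estimates for $Y_\alpha$ and $\hat Y_\beta$, match it against the variance lower bound from~\eqref{eq-discrete-mall:var-square}, and finish with the hypergraph version of Lemma~9 in~\cite{de1996central} exactly as in Theorem~\ref{thm-discrete-mall:hypergraph-condindep-threshold}. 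The paper dispatches your ``main obstacle'' (the mixed flavours involving $W^{[3]}$) by the single observation $\hat X_\beta\le 1$ a.s., which forces the extra $\hat Y_\beta$-factors to contribute at most the same $q_n$-powers already counted by $e^{(2)}_{H_i}$, so the uniform bound for the pure-$\{1,2\}$ patterns extends verbatim to all patterns.
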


\begin{proof}[Proof of theorem~\ref{thm-discrete-mall:hypergraph-condindep-threshold-hoeffding}]
  We follow the same lines as the proof of theorem~\ref{thm-discrete-mall:hypergraph-condindep-threshold}, with the difference that $\pi_0(N_G) = \E [F]$.
  The \eqref{eq-discrete-mall:eigenfunction} assumption holds.
  We recall the bound in our context
  \begin{equation}
    d_{W} (\bar N_G, \Normal (0, 1)) \le C_{e_G} \sqrt{\sum_{(I, J, K, L) \textnormal{ connected}} \sum_{i_i, i_j, i_k, i_l = 1}^{3} | \E [W_I^{[i_i]} W_J^{[i_j]} W_K^{[i_k]} W_L^{[i_l]}] | / \var [N_G] }.
  \end{equation}
  As each connected quadruple $(I, J, K, L)$ is associated to $(H_1, H_2, H_3, H_4)$ subhypergraphs of $G$ such that $I \simeq H_1$, $J  \simeq  H_2$, $K  \simeq H_3$ and $L \simeq H_4$, from theorem~\ref{thm-discrete-mall:hypergraph-condindep-threshold}, we have:
\begin{multline}
  \label{eq-discrete-mall:connected-W_I_1234_upper_bound}
  |\E [W_I^{[1]} W_J^{[i_j]} W_K^{[i_K]} W_L^{[i_l]}]|  \\ \le w_I w_J w_K w_L \, n^{v(H_1 \cup H_2 \cup H_3 \cup H_4)} p_n^{|H_1 \cup H_2 \cup H_3 \cup H_4|} q_n^{|H_1^{(2)} \cup H_2^{(2)} \cup H_3^{(2)} \cup H_4^{(2)}|}
\end{multline}
for $i_j, i_k,i_l \in \{1, 2\}$ as $\prod_{\alpha \in J}\excond{Y_\alpha}{Z} \propto_Z \prod_{\beta \in J^{(2)}} \hat Y_\beta$ and $\E [\hat Y_\beta^k] \propto p_n$ for $k \ge 2$.
As $\hat X_\beta \le 1$ a.s., we also have \eqref{eq-discrete-mall:connected-W_I_1234_upper_bound} for all $i_i, i_j, i_k, i_l \in \{1, 2, 3, 4\}$.
Likewise, the variance reads off in function of the quadruples:
\begin{equation*}
  \var [N_G] = \frac{1}{4} \sum_{\substack{H_1, H_2 \subset G \\ H_1 \cap H_2 \neq \emptyset}}  \left(\sum_{I}^{*H_1}\E [W_I^2] + \sum_{J}^{*H_2}\E [W_J^2]\right)
\end{equation*}
where $\sum_{I}^{*H_1} \cdots$ stands for a sum over $I$ such that $I$ is isomorphic to $H_1$.

We follow the same lines of computations as those leading to \eqref{eq-discrete-mall:var-square}. Then, for fixed quadruples $(H_1, H_2, H_3, H_4)$,
\begin{equation}
  \var [N_G] \ge \frac{1}{16}  \left(\sum_{I}^{*H_1}\E [W_I^2] \times \sum_{J}^{*H_2}\E [W_J^2] \times \sum_{K}^{*H_3}\E [W_K^2] \times \sum_{L}^{*H_4}\E [W_L^2]\right)^{1/2}.
\end{equation}
As
$\E [W_I^2] = p_n^{|H| - |H_1|} q_n^{|H^{(2)}| - |H_1^{(2)}|} q_n^{|H_1^{(2)}|}(1 - q_n)^{|H_1^{(2)}|}(1 - p_n)^{|H_1|}p_n^{|H_1|},$ we have:

\begin{multline*}
  \sum_{(I, J, K, L) \textnormal{ connected}} \sum_{i_i, i_j, i_k, i_l = 1}^{3} | \E [W_I^{[i_i]} W_J^{[i_j]} W_K^{[i_k]} W_L^{[i_l]}] | / \var [N_G] \\
  \le \frac{n^{v(H_1 \cup H_2 \cup H_3 \cup H_4)} p_n^{|H_1 \cup H_2 \cup H_3 \cup H_4|} q_n^{|H_1^{(2)} \cup H_2^{(2)} \cup H_3^{(2)} \cup H_4^{(2)}|}}{\left(n^{v(H_1) + v(H_2) + v(H_3) + v(H_4)} p_n^{|H_1| + |H_2| + |H_3| + |H_4|} q_n^{|H_1^{(2)}| + |H_2^{(2)}| + |H_3^{(2)}| + |H_4^{(2)}|}\right)^{1/2}}.
\end{multline*}
At that point, we arrive at the same upper bound as in the proof of theorem~\ref{thm-discrete-mall:hypergraph-condindep-threshold}.
\end{proof}
While in \cite{Kaur2021,Temcinas2022a}, the probability of keeping a hyperedge does not depend on the number of vertices, we let $p_n$ tend to 0. As a consequence, we can state thresholds for subhypergraph containment that complement the ones in \cite[p.61]{janson2000random}.
As done in \cite{Kaur2021,zhang2022berry} for random graphs, it should be possible to derive with our method the convergence rates considering an arbitrary exchangeable random hypergraph generated by a hypergraphon, the analog of graphon in graph limit theory.

%



\begin{thebibliography}{10}

\bibitem{austin_exchangeable_2008}
Tim Austin.
\newblock On exchangeable random variables and the statistics of large graphs
  and hypergraphs.
\newblock {\em Probability Surveys}, 5(none):80--145, January 2008.

\bibitem{azmoodeh2014fourth}
Ehsan Azmoodeh, Simon Campese, and Guillaume Poly.
\newblock {Fourth moment theorems for Markov diffusion generators}.
\newblock {\em Journal of Functional analysis}, 266(4):2341--2359, 2014.

\bibitem{barbour1989central}
Andrew~D Barbour, Micha{\l} Karo{\'n}ski, and Andrzej Ruci{\'n}ski.
\newblock A central limit theorem for decomposable random variables with
  applications to random graphs.
\newblock {\em Journal of Combinatorial Theory, Series B}, 47(2):125--145,
  1989.

\bibitem{chatterjee2008new}
Sourav Chatterjee.
\newblock A new method of normal approximation.
\newblock {\em The Annals of Probability}, 36(4):1584--1610, 2008.

\bibitem{de1990central}
Peter De~Jong.
\newblock {A Central Limit Theorem for Generalized Multilinear Forms}.
\newblock {\em Journal of Multivariate Analysis}, 34(2):275--289, 1990.

\bibitem{de1996central}
Peter De~Jong.
\newblock {A Central Limit Theorem with Applications to Random Hypergraphs}.
\newblock {\em Random Structures and Algorithms}, 8(2):105--120, 1996.

\bibitem{DecreusefondSteinDirichletMalliavinmethod2015}
Laurent Decreusefond.
\newblock {The Stein-Dirichlet-Malliavin method}.
\newblock {\em ESAIM: Proceedings and Surveys}, 51:49--59, 2015.

\bibitem{Decreusefond2022}
Laurent Decreusefond.
\newblock {\em Selected Topics in Malliavin Calculus: Chaos, Divergence and So
  Much More}.
\newblock Springer Nature, 2022.

\bibitem{decreusefond:hal-01565240}
Laurent Decreusefond and H{\'e}l{\`e}ne Halconruy.
\newblock {Malliavin and Dirichlet structures for independent random
  variables}.
\newblock {\em {Stochastic Processes and their Applications}},
  129(8):2611--2653, August 2019.

\bibitem{dobler2017quantitative}
Christian D{\"o}bler and Giovanni Peccati.
\newblock {Quantitative de Jong theorems in any dimension}.
\newblock {\em Electronic Journal of Probability}, 22, 2017.

\bibitem{dobler2019quantitative}
Christian D{\"o}bler and Giovanni Peccati.
\newblock Quantitative {CLTs} for symmetric $ {U} $-statistics using
  contractions.
\newblock {\em Electronic Journal of Probability}, 24, 2019.

\bibitem{dudley2002real}
Richard~M. Dudley.
\newblock {\em Real Analysis and Probability}.
\newblock Cambridge studies in advanced mathematics. Cambridge University
  Press, 2002.

\bibitem{duerinckx2021size}
Mitia Duerinckx.
\newblock {On the size of chaos via {G}lauber calculus in the classical
  mean-field dynamics}.
\newblock {\em Communications in Mathematical Physics}, pages 1--41, 2021.

\bibitem{dung2018poisson}
Nguyen~Tien Dung.
\newblock {Poisson and normal approximations for the measurable functions of
  independent random variables}.
\newblock {\em arXiv preprint arXiv:1807.10925}, 2018.

\bibitem{dung2021rates}
Nguyen~Tien Dung.
\newblock Rates of convergence in the central limit theorem for nonlinear
  statistics under relaxed moment conditions.
\newblock {\em Acta Mathematica Vietnamica}, pages 1--26, 2021.

\bibitem{efron1981jackknife}
Bradley Efron and Charles Stein.
\newblock The jackknife estimate of variance.
\newblock {\em The Annals of Statistics}, pages 586--596, 1981.

\bibitem{hoeffding1948class}
Wassily Hoeffding.
\newblock A class of statistics with asymptotically normal distribution.
\newblock {\em The Annals of Mathematical Statistics}, pages 293--325, 1948.

\bibitem{houdre2002concentration}
Christian Houdr{\'e} and Nicolas Privault.
\newblock Concentration and deviation inequalities in infinite dimensions via
  covariance representations.
\newblock {\em Bernoulli}, 8(6):697--720, 2002.

\bibitem{janson2000random}
Svante Janson, Tomasz Luczak, and Andrzej Ruci{\'n}ski.
\newblock {\em Random Graphs}.
\newblock John Wiley \& Sons, Inc., 2000.

\bibitem{janson1991asymptotic}
Svante Janson and Krzysztof Nowicki.
\newblock The asymptotic distributions of generalized $ {U} $-statistics with
  applications to random graphs.
\newblock {\em Probability theory and related fields}, 90(3):341--375, 1991.

\bibitem{kallenberg1997foundations}
Olav Kallenberg.
\newblock {\em Foundations of Modern Probability}.
\newblock Springer, 1997.

\bibitem{Kaur2021}
Gursharn Kaur and Adrian R{\"o}llin.
\newblock Higher-order fluctuations in dense random graph models.
\newblock {\em Electronic Journal of Probability}, 26:1--36, 2021.

\bibitem{krokowski2017discrete}
Kai Krokowski, Anselm Reichenbachs, and Christoph Th{\"a}le.
\newblock Discrete {Malliavin--Stein} method: {Berry}--{Esseen} bounds for
  random graphs and percolation.
\newblock {\em The Annals of Probability}, 45(2):1071--1109, 2017.

\bibitem{lachieze2017new}
Rapha{\"e}l Lachi{\`e}ze-Rey and Giovanni Peccati.
\newblock {New Berry--Esseen bounds for functionals of binomial point
  processes}.
\newblock {\em The Annals of Applied Probability}, 27(4):1992--2031, 2017.

\bibitem{ledoux2012chaos}
Michel Ledoux.
\newblock Chaos of a {Markov} operator and the fourth moment condition.
\newblock {\em Annals of Probability}, 40(6):2439--2459, 2012.

\bibitem{lovasz2012large}
L{\'a}szl{\'o} Lov{\'a}sz.
\newblock {\em Large networks and graph limits}, volume~60.
\newblock American Mathematical Soc., 2012.

\bibitem{mcdiarmid1989method}
Colin McDiarmid.
\newblock On the method of bounded differences.
\newblock {\em Surveys in combinatorics}, 141(1):148--188, 1989.

\bibitem{NourdinNormalApproximationsMalliavin2012}
Ivan Nourdin and Giovanni Peccati.
\newblock {\em Normal approximations with Malliavin calculus: from Stein's
  method to universality}.
\newblock Number 192 in Cambridge Tracts in Mathematics. Cambridge University
  Press, 2012.

\bibitem{privault2008stochastic}
Nicolas Privault.
\newblock {Stochastic analysis of Bernoulli processes}.
\newblock {\em Probability Surveys}, 5:435--483, 2008.

\bibitem{privault2020normal}
Nicolas Privault and Grzegorz Serafin.
\newblock Normal approximation for sums of discrete {$ U $}-statistics --
  {A}pplication to {Kolmogorov} bounds in random subgraph counting.
\newblock {\em Bernoulli}, 26(1):587--615, 2020.

\bibitem{privault2022berry}
Nicolas Privault and Grzegorz Serafin.
\newblock {Berry-Esseen bounds for functionals of independent random
  variables}.
\newblock {\em Electronic Journal of Probability}, 27:1--37, 2022.

\bibitem{rao2009conditional}
B.L.S.~Prakasa Rao.
\newblock Conditional independence, conditional mixing and conditional
  association.
\newblock {\em Annals of the Institute of Statistical Mathematics},
  61(2):441--460, 2009.

\bibitem{reinert2010stein}
Gesine Reinert, Ivan Nourdin, and Giovanni Peccati.
\newblock {Stein's method and stochastic analysis of Rademacher functionals}.
\newblock {\em Electronic Journal of Probability}, 15:1703--1742, 2010.

\bibitem{rucinski1988small}
Andrzej Ruci{\'n}ski.
\newblock When are small subgraphs of a random graph normally distributed?
\newblock {\em Probability Theory and Related Fields}, 78(1):1--10, 1988.

\bibitem{rollin2022kolmogorov}
Adrian Röllin.
\newblock Kolmogorov bounds for the normal approximation of the number of
  triangles in the {E}rdos-{R}{\'e}nyi random graph.
\newblock {\em Probability in the Engineering and Informational Sciences},
  36(3):747–773, 2022.

\bibitem{Temcinas2022a}
Tadas Temčinas, Vidit Nanda, and Gesine Reinert.
\newblock Multivariate {Central} {Limit} {Theorems} for {Random} {Clique}
  {Complexes}, June 2022.
\newblock arXiv:2112.08922 [math] type: article.

\bibitem{yosida1995functional}
K{\"o}saku Yosida.
\newblock {\em Functional Analysis}.
\newblock Classics in Mathematics. Springer Berlin Heidelberg, 1995.

\bibitem{zhang2022berry}
Zhuo-Song Zhang.
\newblock Berry--{Esseen} bounds for generalized {$U$}-statistics.
\newblock {\em Electronic Journal of Probability}, 27:1--36, 2022.

\end{thebibliography}

\end{document}